\documentclass[10pt,a4paperwide,leqno]{amsart}
\usepackage{amsmath}
\usepackage{amssymb}
\usepackage{amsfonts}
\usepackage{amsthm}
\usepackage{mathrsfs}
\usepackage{dsfont}
\usepackage{mathtools}
\usepackage{bm}

\usepackage[backref=page]{hyperref}
\usepackage{color}

\definecolor{darkgreen}{rgb}{0.5,0.25,0}
\definecolor{darkblue}{rgb}{0,0,1}
\definecolor{answerblue}{rgb}{0,0,0.75}

\hypersetup{colorlinks,breaklinks,
linkcolor=darkblue,urlcolor=darkblue,
anchorcolor=darkblue,citecolor=darkblue}




\newcommand*{\mailto}[1]{\href{mailto:#1}{\nolinkurl{#1}}}

\theoremstyle{plain}
\newtheorem{theorem}{Theorem}
\newtheorem{lemma}[theorem]{Lemma}
\newtheorem{corollary}[theorem]{Corollary}

\theoremstyle{definition}

\theoremstyle{remark}
\newtheorem{remark}[theorem]{Remark}

\numberwithin{theorem}{section}
\numberwithin{equation}{section}

\DeclareMathOperator{\tr}{tr}

\def\mff{{\mathfrak f}}
\def\mx{{\bf x}}
\def\my{{\bf y}}
\def\mk{{\bf k}}
\def\Div{{\rm div}}

\def\pa{\partial}
\def\cal{\mathcal}
\let\mib=\boldsymbol

\def\C{\Bbb{C}}
\def\R{\Bbb{R}}

\def\N{\Bbb{N}}
\def\S{\Bbb{S}}

\def\dscon{\relbar\joinrel\rightharpoonup}
\def\eps{\varepsilon}
\def\malpha{{\mib \alpha}}
\def\mxi{{\mib \xi}}
\def\mlambda{{\mib \lambda}}
\def\meta{{\mib \eta}}
\def\mzeta{{\mib \zeta}}
\def\ph{\varphi}

\newcommand{\cR}{\mathcal{R}}
\newcommand{\cO}{\mathcal{O}}
\newcommand{\cD}{\mathcal{D}}
\newcommand{\cDp}{\mathcal{D}^\prime}
\newcommand{\cC}{\mathcal{C}}
\newcommand{\cS}{\mathcal{S}}
\newcommand{\cA}{\mathcal{A}}
\newcommand{\cB}{\mathcal{B}}
\newcommand{\cF}{\mathcal{F}}
\newcommand{\cI}{\mathcal{I}}
\newcommand{\norm}[1]{\left \lVert#1 \right\rVert}
\newcommand{\cM}{\mathcal{M}}

\newcommand{\cT}{\mathcal{T}}
\newcommand{\abs}[1]{\left\lvert#1\right\rvert }
\newcommand{\bS}{\Bbb{S}}
\newcommand{\weak}{\rightharpoonup}

\newcommand{\inn}[1]{\left\langle#1\right\rangle}
\newcommand{\innb}[1]{\bigl\langle#1\bigr\rangle}
\newcommand{\weakstar}{\xrightharpoonup{\star}}
\newcommand{\action}[2]{\left\langle #1, #2 \right\rangle}
\newcommand{\actionb}[2]{\bigl\langle #1, #2 \bigr\rangle}
\newcommand{\loc}{\operatorname{loc}}
\newcommand{\supp}{\operatorname{supp}}

\newcommand{\ton}{\xrightarrow{n\uparrow \infty}}
\newcommand{\toell}{\xrightarrow{\ell\uparrow \infty}}
\newcommand{\todelta}{\xrightarrow{\delta\downarrow 0}}

\newcommand{\toN}{\xrightarrow{N\uparrow \infty}}

\newcommand{\weakn}{\xrightharpoonup{n\uparrow \infty}}
\newcommand{\seq}[1]{\left\{#1\right\}}

\newcommand{\sign}{\operatorname{sign}}
\newcommand{\meas}{\operatorname{meas}}
\newcommand{\En}{\mathbf{1}}

\makeatletter
\@namedef{subjclassname@2020}{%
  \textup{2020} Mathematics Subject Classification}
\makeatother

\begin{document}

\title[Deterministic and stochastic velocity averaging]
{Velocity averaging under minimal conditions \\ for
deterministic and stochastic kinetic equations with irregular drift}

\author[Erceg]{Marko Erceg}
\address[Marko Erceg]
{Department of Mathematics, 
Faculty of Science, 
University of Zagreb, 
Bijeni\v cka cesta 30, 
10000 Zagreb, Croatia}
\email{\mailto{maerceg@math.hr}}

\author[Karlsen]{Kenneth H. Karlsen}
\address[Kenneth H. Karlsen]
{Department of Mathematics, 
University of Oslo, 
P.O. Box 1053 Blindern, 
NO-0316 Oslo, Norway}
\email{\mailto{kennethk@math.uio.no}}

\author[Mitrovi\'{c}]{D. Mitrovi\'{c}}
\address[Darko Mitrovi\'{c}]
{University of Montenegro, 
Faculty of Mathematics, 
Cetinjski put bb,
81000 Podgorica, 
Montenegro}
\address{University of Vienna, 
Faculty of Mathematics, 
Oscar-Morgenstern-Platz 1, 
1090 Vienna, Austria}
\email{\mailto{darkom@ucg.ac.me}}

\subjclass[2020]{Primary: 35B65, 35R60; Secondary: 35Q83, 42B37}

\keywords{Kinetic equations, stochastic kinetic equations, 
velocity averaging, limiting case, $H$-measures, $H$-distributions}

\begin{abstract}
This study investigates the $L^1_{\operatorname{loc}}$ 
compactness of velocity averages 
of sequences of solutions $\{u_n\}$ for a class 
of kinetic equations. The equations are examined within 
both deterministic and stochastic heterogeneous environments. 
The primary objective is to deduce velocity averaging results 
under conditions on $u_n$ and the drift 
${\mathfrak f}={\mathfrak f}(t,{\boldsymbol x},{\boldsymbol \lambda})$ 
that are more lenient than those stipulated in previous studies.
The main outcome permits the inclusion of highly irregular drift vectors 
${\mathfrak f} \in L^q$ that adhere to a general non-degeneracy condition.  
Moreover, the sequence $\{u_n\}$ is uniformly bounded 
in $L^p$---for an exponent $p$ allowed to be 
strictly smaller than $2$---under the 
requirement $\frac{1}{p} + \frac{1}{q} < 1$. 
Resolving the matter of strong compactness in velocity averages, 
considering these assumptions, has remained 
an open problem for a long time. The cornerstone of our 
work's progress lies in the strategic employment of the broader 
concept of $H$-distributions, moving beyond the traditional 
reliance on $H$-measures. Notably, our study represents one 
of the first significant uses of $H$-distributions in this context.
\end{abstract}

\date{\today}

\maketitle

\setcounter{tocdepth}{1}
\tableofcontents

\section{Introduction}
Kinetic PDEs are used in a variety of areas of 
physics and mathematics, including statistical mechanics and 
in the study of hyperbolic conservation laws via 
the so-called kinetic formulations \cite{Perthame:2002qy}. 
Conservation laws with non-homogenous ($\mx$-discontinuous) 
flux functions arise in many applications. For example, in 
oil reservoirs, the permeability of rock can differ 
greatly in different locations, resulting in a 
discontinuous flux function. Similarly, in traffic 
flow, the density of vehicles can change abruptly 
at bottlenecks, also resulting in a discontinuous 
flux function. In order to account for both 
randomness and abrupt changes in the underlying system, 
the relevant models take the form of stochastic 
conservation laws with $\mx$-discontinuous flux. 
These macroscopic models can be formulated as 
deterministic and stochastic kinetic equations  
with a transport part that is  non-homogenous and $\mx$-dependent 
in a rough (discontinuous) way. 

In the present study, we 
focus on the strong compactness of velocity averages of 
solutions $u_n$ to a class of kinetic equations in both 
deterministic and stochastic settings. The deterministic form 
of the equations is given by
\begin{equation}\label{eq-1}
	\pa_t u_n+\Div_\mx \bigl (\, \mff(t,\mx,\mlambda) u_n\, \bigr)
	=\pa_{\mlambda}^{\malpha} g_n, 
\end{equation}
for $(t,\mx)\in \R_+\times \R^d=:\R^{d+1}_+$, 
$\mlambda \in \R^m$, and $\malpha \in \N^m$ 
with $\abs{\malpha}=N_\mlambda\ge 0$.

The primary objective of this work is to establish 
velocity averaging results under minimal conditions 
on the solutions $u_n$ and the vector field $\mff$. 
These conditions are notably weaker than those imposed in existing 
literature (see upcoming discussion). Specifically, we assume that
\begin{equation}\label{eq:intro-ass1}
	\text{$\seq{u_n}_{n\in \N}$ 
	is bounded in $L^p(\R^{d+1}_+\times \R^m)$ with $p>1$},
\end{equation}
and that the vector field
\begin{equation}\label{eq:intro-ass2}
	\text{$\mff$ belongs to $L^q(\R^{d+1}_+\times \R^m;\R^d)$ 
	with $\frac{1}{p}+\frac{1}{q}<1$}.
\end{equation}
Without loss of generality, we may assume $p < 2$ (and thus $q > 2$), 
as the case $p \geq 2$ is comprehensively addressed 
elsewhere, see, e.g., \cite{Erceg:2023aa,Lazar:2012aa}. 
We emphasize that the drift vector 
$\mff=\mff(t,\mx,\mlambda)$ is allowed 
to exhibit a high degree of irregularity.

The source term on the right-hand side 
of the kinetic equation \eqref{eq-1} can certainly be expressed 
in more general forms, however, the current source covers a 
majority of interesting applications, including 
hyperbolic conservation laws. 
This balance between generality and simplicity helps to 
keep the technicalities of the paper at a manageable level.
We will assume that
\begin{equation}\label{eq:intro-ass3}
	\text{$\seq{g_n}_{n\in \N}$ is precompact 
	in $W^{-1,\alpha}(\R^{d+1}_+\times \R^m)$, 
	for some $\alpha>1$}.
\end{equation} 

The classical averaging lemmas state that by  
averaging the solution $u=u(t,\mx,\mlambda)$ of a 
kinetic PDE in the microscopic variable $\mlambda$, 
the resulting function, denoted by 
$\actionb{u(t,\mx,\cdot_\mlambda)}{\rho}:=\int u(t,\mx,\mlambda)
\rho(\mlambda) \,d\mlambda$, will have 
better (fractional Sobolev/Besov) regularity 
in $t,\mx$ than the original function $u$, for 
a suitable averaging kernel $\rho$. 

Velocity averaging lemmas have been widely 
used to determine the existence of solutions 
to conservation laws and related PDEs  by analyzing the 
precompactness of sequences of solutions $\seq{u_n}$ to 
kinetic equations like \eqref{eq-1} when 
averaged over the velocity variable 
$\mlambda$. These lemmas are typically 
applied to homogeneous equations, 
where the coefficients $\mff=\mff(\mlambda)$ only depend 
on the velocity variable $\mlambda$. 
This is because if the coefficients 
in the kinetic equation do not vary with position and time, 
the Fourier transform can be applied to 
separate the solutions and coefficients, allowing 
for the estimation of solutions through the 
known coefficients. There is a large body 
of literature on averaging lemmas 
for homogeneous kinetic equations and only a few will be mentioned here. 
The averaging lemmas were first introduced independently 
in \cite{Agoshkov:84} and \cite{Golse-etal:88} in the $L^2$ setting. 
They were later extended to general $L^p$, $1< p<\infty$, 
by \cite{Bezard:94} and \cite{DiPerna-etal:91}. 
This was followed by the optimal Besov space regularity proved 
in \cite{DeVorePetrova:01}. The regularity for the $L^p$ 
case is further improved in the one-dimensional case, precisely 
from $\frac{1}{p}$ to $1-\frac{1}{p}$ 
when $p>2$ by \cite{ArsenioMasmoudi:19}. 
The topic of averaging lemmas under different conditions was 
further examined in various studies 
\cite{ArsenioSaint-Raymond:11, 
GolseSaint-Raymond:02,JabinVega:03,JabinVega:04, 
PerthameSouganidis:98,Tadmor:2006vn,Westdickenberg:02} 
(see \cite{Jabin:2009aa} for a partial review).

In the case where the coefficients depend in a rough way on $\mx$, 
as in \eqref{eq-1}, the primary method for deriving velocity averaging 
results is through the use of \textit{defect measures} that 
take into account oscillations. This approach, originally 
studied for sequences satisfying elliptic estimates 
(see \cite[Chapter 4]{Evans:1990wt}), involves analyzing loss 
of compactness in a weakly converging sequence of functions. 
However, as elliptic estimates do not account 
for oscillations commonly found in hyperbolic problems, 
in order to control them, G\'erard \cite{Gerard:91} 
and Tartar \cite{Tartar:1990mq} independently 
introduced the concepts of micro-local 
defect measures (G\'erard) and $H$-measures (Tartar). 
We will mainly use the notion of $H$-measures. 

$H$-measures are types of measures that generalise defect measures 
and depend on space variables $\mx$ and dual variables $\mxi$. 
They are Radon measures on a 
specific bundle of spaces ($\R^d\times\S^{d-1}$) and 
are associated with weakly converging sequences in $L^2$. 
The existence of $H$-measures follows from the 
Schwartz kernel theorem \cite{HormanderI:90}. 
The advantage of $H$-measures is that they capture 
information about the direction of oscillations in a sequence, which 
defect measures (and Young measures) cannot. This makes 
them useful for studying oscillations, a common 
cause of non-compactness. An example is provided by a 
sequence of plain waves of the form $u_n(\mx)=U(\mx)e^{2\pi i 
\frac{\mx\cdot \mk}{\eps_n}}$, where $0\neq U \in L^2_{\loc}(\R^d)$, 
$\mk \neq 0$, and $\eps_n \to 0$. 
This sequence converges to $0$ 
weakly in $L^2_{\loc}(\R^d)$. The associated defect measure 
is $\abs{U(\mx)}^2\,d\mx$, while
the $H$-measure is $\abs{U(\mx)}^2 d\mx \otimes 
\delta_{\frac{\mk}{\abs{\mk}}}(d\mxi)$, so 
the direction of oscillations is 
captured by the latter.

$H$-measures have been widely employed as 
an effective tool for analyzing PDEs and 
have demonstrated successful applications in 
various fields. They have been used to generalize 
compensated compactness to the variable 
coefficients case \cite{Gerard:91,Misur:2015aa,Tartar:1990mq}, 
in control theory applications \cite{DehmanEtal:2014,LazarZuazua:2014}, 
to establish strong convergence results 
\cite{Panov:1994gf,Panov:2010aa} 
and existence of strong traces \cite{Aleksic:2013aa,Panov:2005lr} 
for conservation laws and related equations, to 
derive velocity averaging results for kinetic 
PDEs \cite{Lazar:2012aa,Erceg:2023aa}, and in 
homogenization theory \cite{AntonicBurazinJankov:23}. 
In this study, we will emphasize the application 
of $H$-measures to kinetic PDEs.

Kinetic PDEs and $H$-measures frequently emerge 
in the kinetic formulation of scalar conservation laws 
that feature a $(t,\mx)$-discontinuous 
flux vector $f=f(t,\mathbf{x},\mlambda)$. In this context, 
$\mff$ in \eqref{eq-1} is recognized as $\partial_\mlambda f$. 
Addressing the question of strong convergence 
of approximate solutions to conservation laws with discontinuous flux, 
\cite{Panov:2010aa} considers solutions $u_n$ that 
are uniformly bounded in $L^\infty$ and assumes that 
the flux meets certain conditions like 
$\mff(\cdot,\cdot,\mlambda) \in L^q_{t,\mx}$, $q>2$. 
This assumption is essential for the analysis, as it pertains 
to certain ``parameter-dependent" $H$-measures. 
A step forward was made in 
\cite{Lazar:2012aa,LazarMitrovic:16} 
using a ``vector-valued" $H$-measure framework, where it was 
allowed that $\mff(\cdot,\cdot,\mlambda) \in L^q_{t,\mx}$ for $q>1$. 
In the case $u_n$ is uniformly bounded in $L^p$ with $p\geq 2$, 
the recent work \cite{Erceg:2023aa} attained the most comprehensive 
non-degeneracy condition on $\mff$ (in a 
general second order setting) while 
simultaneously meeting the regularity 
condition $\mff\in L^q$ with $1/p+1/q<1$. 

In the present work, we address and conclusively bridge 
the outstanding gap by establishing the result for 
$ u_n \in_b L^p$ with $p < 2$, where the 
symbol ``$\in_b$'' signifies uniform boundedness within 
the specified space, and similar notations apply to 
other functional spaces. 

We note that the issue of velocity averaging 
within the context of first-order 
transport equations, encompassing 
both the $L^p$ framework for $p<2$ and a drift that 
is heterogeneously dependent on the spatial variable, has 
remained an open question for quite some time.  
G\'erard's seminal work \cite{Gerard:91} from 1991 
laid down the foundation for the use of microlocal defect measures 
in velocity averaging problems within the $L^2$ 
framework. G\'erard \cite[page 1764]{Gerard:91} 
also raised a question about the potential 
extension of microlocal defect measures 
to address problems in the $L^p$ setting for $p<2$. 
We posit that our main result 
(Theorem \ref{thm:main-result-determ}) 
offers a response to the question 
posed by G\'erard back in 1991, thereby 
bridging a crucial gap in the existing literature.

The key advance of our work lies in a strategic 
use of the broader concept of $H$-distributions. 
In fact, our work represents one of the 
first substantial applications 
of $H$-distributions. Specifically, for any 
$\rho \in C^{N_\mlambda}_c(\R^m)$, we 
will prove that $\actionb{u_n}{\rho}$ converges 
strongly along a subsequence in $L^1_{\loc}(\R^{d+1}_+)$, 
under the assumptions \eqref{eq:intro-ass1}, 
\eqref{eq:intro-ass2}, \eqref{eq:intro-ass3}, and the 
general non-degeneracy condition
\begin{align}\label{non-deg}
	\iint_K \, \sup\limits_{\abs{\mxi}=1} 
	\meas \Bigl\{  \mlambda \in L:\,
	\abs{\xi_0+\actionb{\mff(t,\mx,\mlambda)}{\mxi}} =0
	\Bigr\} \,d\mx\,dt =0, 
\end{align} 
for any $K \times L \Subset \R^{d+1}_+ \times \R^m$. 
Here, $\mxi=(\xi_0,\xi_1,\ldots,\xi_d)=(\xi_0,\mxi')\in \R^{d+1}$ 
and $\meas$ denotes the Lebesgue 
measure with respect to $\mlambda \in \R^m$.  

The $H$-distributions were first introduced in \cite{Antonic:2011aa} 
and then extended in \cite{LazarMitrovic:13,Misur:2015aa}. 
Additionally, we highlight the work \cite{Rindler:15}, which 
establishes a connection between $H$-distributions and Young measures 
via micro-local defect forms, and \cite{AntonicMitrovicPalle:20} where a 
relation between the $H$-distributions and micro-local 
defect forms can be found. The earlier introduced $H$-measure 
objects were limited to solving problems within the $L^2$-framework. 
The $H$-distributions were introduced as a 
versatile extension of $H$-measures, capable of handling sequences 
that are bounded in $L^p$ with $p\neq 2$. In contrast to $H$-measures, 
$H$-distributions are not positive and 
are classified as Schwartz distributions, 
generally not Radon measures in $\mx,\mxi$.  
It turns out that anisotropic 
distributions \cite{AntonicErcegMarin:21} provide 
a more precise description: $H$-distributions 
are distributions of order 0 (measures) in $\mx$ 
and of finite order in $\mxi$. 

\medskip

Another goal of this paper is to use the $H$-distributions to 
establish a velocity averaging result for a class of 
stochastic kinetic equations. These equations are similar in form 
to the deterministic equations, but with the addition of 
an It\^{o} stochastic forcing term $\Phi_n(t,\mx,\mlambda)\, dW_n$. 
Here, $W_n(t)$ is a cylindrical Wiener process \cite{DaPrato:2014aa} 
evolving over a Hilbert space $\Bbb{K}$, and $\Phi_n$ is a noise coefficient 
that belongs to $L^2(\Omega \times [0, T];L_2(\Bbb{K},L^2_{\mx,\mlambda}))$, 
where $T>0$ is fixed. We consider a sequence of stochastic kinetic equations 
(on $[0,T]$) of the form
\begin{equation}\label{eq-s}
	du_n+\Div_\mx \bigl (\, \mff(t,\mx,\mlambda) u_n \, \bigr)\, dt
	=\pa_{\mlambda}^{\malpha} g_n \, dt
	+\Phi_n(t,\mx,\mlambda) \, dW_n(t), 
\end{equation}
indexed over $n\in \N$, with $\abs{\malpha}=N_\mlambda\ge 0$. 
We assume that 
\begin{equation}\label{eq:intro-ass1-stoch}
	\text{$u_n \in_b L^p_{\omega,t,\mx,\mlambda}$ 
	\, and \,
	$u_n\weakn u$ in $L^p_{t,\mx,\mlambda}$, a.s.},
\end{equation}
where $L^p_{\omega,t,\mx,\mlambda}$ is short-hand 
for $L^p(\Omega\times [0,T]\times \R^d\times\R^m)$. 
The drift $\mff$ still satisfies \eqref{eq:intro-ass2} 
and \eqref{non-deg}, while the source 
condition \eqref{eq:intro-ass3} is replaced by 
\begin{equation}\label{eq:intro-ass3-stoch}
	\text{$g_n\in_b L^1_\omega W^{-1,\alpha}_{t,\mx,\mlambda}$ 
	\, and \, 
	$g_n\ton g$ in $W^{-1,\alpha}_{t,\mx,\mlambda}$, a.s., 
	for some $\alpha>1$}. 
\end{equation} 
Finally, we assume that
\begin{equation}\label{eq:intro-assPhi-stoch}
	\begin{split}
		&\text{$\Phi_n \in_b L^2(\Omega \times [0, T];
		L_2(\Bbb{K},L^2_{\mx,\mlambda}))$, \,
		$\Phi_n$ is progressively measurable}.
	\end{split}
\end{equation}

Research on stochastic velocity averaging 
has been relatively sparse. Previous works such as 
\cite{Gess:2018ab,Gess:2017aa,Lions:2013ab,
Nariyoshi:2020aa,Punshon-Smith:2018aa} have focused on 
developing results for homogeneous equations, where $\mff$ is 
independent of $t$ and $\mx$. However, the recent 
work \cite{Karlsen:2022aa} has established a velocity averaging result 
for non-homogeneous kinetic equations, like the one 
shown in \eqref{eq-s}. This result is based on intricate arguments 
involving a specific quasi-Polish space and a.s.~representations 
of random variables in non-metric spaces \cite{Jakubowski:1997aa}. 
These arguments are used to replace the assumption of 
weak $L^2$ convergence of $u_n$ in all variables $(\omega,t,\mx,\mlambda)$ 
by weak convergence in $\mx,\mlambda$, strong convergence in $t$, 
and a.s.~convergence in the probability variable $\omega$. 
The authors then developed a ``vector-valued" $H$-measure framework to 
prove the strong $L^2_{\loc}$ convergence along a subsequence of the velocity 
averages $\actionb{u_n}{\rho}$, assuming 
that $u_n\to 0$ almost surely in the quasi-Polish 
space $L^2_t\bigl((L^2_{\mx,\mlambda})_w\bigr)$, where 
$(L^2_{\mx,\mlambda})_w$ refers to the $L^2$ space with 
a weak topology in the variables $\mx$ and $\mlambda$. 
It is important to note here that the topology in $t$ is strong.

In this study, we introduce a convergence framework 
built upon ``stochastic" $H$-distributions. 
This significantly refines the constraints 
placed on the solutions $u_n$. We operate under the 
general assumption given by \eqref{eq:intro-ass1-stoch}. 
Contrastingly, \cite{Karlsen:2022aa} posited that the sequence 
$\seq{u_n}$ comprised solutions $u_n$ uniformly bounded 
in $L^\infty_{\omega,t,\mx,\mlambda}$. By leveraging a tailored 
$H$-distribution approach, which sidesteps several complexities found in 
\cite{Karlsen:2022aa}, we adopt a boundedness 
condition in $L^p$, for a potentially small $p>1$. 
This expansion sets the stage for a broader range 
of applications in the analysis of kinetic models with 
strongly heterogeneous coefficients. 

Indeed, the $L^p$ framework is natural 
for Boltzmann, Vlasov, and other related kinetic equations, 
since their global solutions are controlled mainly 
through mass, entropy, and moment bounds. 
Once heterogeneous or non-smooth transport 
fields $\mff(\mx,\mlambda)$ are admitted, classical $L^2$ 
microlocal defect measure methods become ill-suited, 
whereas our $L^p$ velocity averaging result remains robust in 
this low-regularity regime; as an example, we 
mention Vlasov-type equations where the coefficients may depend 
on space and velocity in a merely $BV$ (bounded variation)
way \cite{Bouchut:2001zr} (see also \cite{Le-Bris:2004aa,Lerner:2004aa}), 
and we refer to \cite{Karlsen:2022aa} for further
examples of discontinuous drifts arising from nonlinear (S)PDEs. 
Building on the perspectives of DiPerna, Lions, 
and Meyer \cite{DiPerna-etal:91} 
and Tadmor and Tao \cite{Tadmor:2006vn}---where 
\textit{homogeneous} averaging already 
operates with $u\in L^p$ for $p>1$---our results extend 
velocity averaging compactness 
to settings with strong heterogeneity, 
non-flat geometries, and stochastic forcing.
This provides a new tool for tackling open problems
in kinetic theory and nonlinear (S)PDEs where 
heterogeneity and randomness play a decisive role 
(see Section \ref{sec:applic} for some potential applications). 

\medskip

To wrap up this introduction, we want to touch upon 
a broader category of kinetic equations, specifically those that 
involve potentially degenerate diffusive terms. Over the years, there 
have been some studies using $H$-measure techniques to establish 
strong compactness of entropy solutions to mixed hyperbolic-parabolic 
PDEs with heterogeneous flux ($u_n\in_b L^p$ with $p$ large).  
Some examples include the works \cite{Erceg:2023aa,Holden:2009nx}, 
see also the reference list of \cite{Erceg:2023aa}. We refer to 
\cite{Antonic:2008gf,Panov:2011ab} for applications 
of $H$-measures to parabolic and ultra-parabolic equations. 
For mixed hyperbolic-parabolic PDEs with homogeneous 
flux and diffusion coefficients, see \cite{Gess:2018ab,Lions:1994qy,Tadmor:2006vn}. 
We hope to build upon the insights of this paper and the aforementioned 
references to provide strong compactness results under more 
general conditions, particularly for stochastic kinetic equations 
with second-order degenerate terms. 
This topic, along with some of the applications discussed 
in Section \ref{sec:applic}, will be explored in future research.

\medskip

The remaining part of the paper is structured as follows. 
We use Section \ref{sec:applic} to discuss several prospective 
applications of our results, in order to motivate the need for 
a general heterogeneous $L^p$ framework. 
The main theoretical developments are then presented 
in Section \ref{sec:deterministic}, where we analyse deterministic 
equations, and in Section \ref{sec:stochastic}, which focuses on 
stochastic equations. 
Finally, in the appendix (Section \ref{sec:appendix-H-distribution}) 
we provide background material on $H$-measures 
and $H$-distributions, which is essential for 
a full understanding of the results in the main text.

\section{Prospective applications and discussion}\label{sec:applic}
Our heterogeneous $L^p$ averaging theory 
applies to a far broader class of equations than before, 
achieving a level of generality well beyond that of 
homogeneous or $L^2$-based 
heterogeneous theories. To indicate the scope 
and potential impact of this framework, we now discuss 
several prospective applications.
In line with the cited literature, we will here write the 
independent variables as $(t,x,v)$, where $v$ denotes 
the transport (velocity) direction and $f=f(t,x,v)$ the 
particle density (unknown solution); 
this differs from our general notation 
$(t,\mx,\mlambda)$ for the unknown $u(t,\mx,\mlambda)$ 
with drift $\mff(t,\mx,\mlambda)$, and in this discussion the 
drift will be denoted by $a(x,v)$.

\subsection{Radiative-transfer models}
One natural application is radiative-transfer models 
based on linear Boltzmann-type equations with heterogeneous drift. 
In radiative transfer (see, e.g., \cite{Bal:2006aa,Klibanov:2023aa}),
such equations describe the evolution of photon or neutron
densities in inhomogeneous media: the drift (transport) term
$a(x,v)\cdot\nabla_x f$ (or, in conservative form, $\Div_x(a(x,v)f)$) 
and the (linear) collision operator $Q$ together encode spatially varying 
absorption and scattering properties, 
with $a$ sometimes only piecewise smooth  or even discontinuous.  
A representative equation is obtained from \eqref{eq:boltzmann} 
by taking $A,G\equiv 0$ and viewing $Q(f,f)=Q(f)$ 
as a linear collision operator. In this setting, strong 
compactness of averages 
is a key tool for establishing kinetic or macroscopic 
(e.g., diffusion) limits in heterogeneous media;
see, for instance, the classical 
works on transport and diffusion asymptotics 
\cite{BalRyzhik:00,Cercignani:1988aa,DegondGoudonPoupaud:00,
Dumas:2000aa,LevermorePomraning:81,Sentis:80}. 
The heterogeneous averaging principle 
established here provides exactly the compactness 
mechanism needed to pass to the limit in kinetic models 
where the drift or collision cross-section 
depends irregularly on $x$ or even on 
random parameters.

\subsection{Socio-economic models}
A broad range of socio-economic mathematical 
models show that Boltzmann-type kinetic equations 
with strongly heterogeneous coefficients arise naturally 
when agents interact through state-dependent rules 
(set $A,G\equiv 0$ in \eqref{eq:boltzmann} for 
a typical equation). 
Unlike the physical Boltzmann equation---where Newton’s law
fix the transport term in the form $v\cdot\nabla_x$, since 
particle velocities are the derivatives 
of spatial trajectories---these socio-economic models 
prescribe the drift through behavioural or 
strategic rules, which leads to transport operators such as 
$a(x,v)\cdot\nabla_x f$ or $\Div_x(a(x,v)f)$, whose 
$(x,v)$-dependence reflects the heterogeneity of the 
underlying population. In models of opinion formation with 
leadership or political segregation \cite{During:2009aa,During:2015aa} 
this drift and the interaction kernel vary across socio-political 
location. In value-estimation models with irrationality 
and herding \cite{During:2017aa} abrupt changes in 
behaviour across risk zones produce transport fields 
that may even be \textit{discontinuous}. 
Wealth-exchange models 
\cite{Degond:2014ab,Degond:2014aa,During:2007aa} 
generate heterogeneous drifts through coupling 
between wealth and an economic configuration variable. 
Related kinetic opinion models \cite{Toscani:2006aa} 
feature drift and interaction rules depending on the 
evolving distribution of opinions. 

Across all these works, however, the 
genuinely $x$-inhomogeneous Boltzmann equation—with 
the spatial heterogeneity built in at the modelling level—is 
never analyzed directly. Instead, the mathematical theory 
is developed only after passing to suitable 
(formal) limits---grazing-collision, mean-field, or 
hydrodynamic---which lead to more tractable 
Fokker-Planck-type equations. Our main 
velocity averaging theorem gives direct access to these 
heterogeneous Boltzmann formulations and opens 
a route toward global existence theories in 
the spirit of \cite{DiPerna:1989ab}, 
without recourse to such simplified reductions.

\subsection{General relativity}
A more geometric direction involves 
studying Boltzmann and related 
kinetic equations on manifolds $(M,g)$.  
The microlocal structure underlying heterogeneous 
velocity averaging is naturally 
compatible with local coordinate patches 
and low-regularity metric coefficients. 
This makes it feasible to examine general kinetic 
equations on Riemannian manifolds 
\cite{Ben-Artzi:2007aa,Galimberti:2018aa,Karlsen:2022aa} 
and to provide a compactness tool for 
constructing solutions under minimal geometric regularity. 
More generally, we can study Boltzmann 
equations on manifolds that incorporate heterogeneity 
directly in the transport field. 

An important  source of such models 
comes from general relativity \cite{Rendall:2008aa}. 
There, the natural phase space is the set 
of spacetime points $(t,x)$ together with momenta $p$ 
satisfying the relativistic mass constraint, and the 
one-particle distribution function $f$ satisfies a Boltzmann 
equation driven by the Liouville operator associated 
with the geodesic flow and possible external fields. 
In the Einstein-Boltzmann setting of \cite{LeeRendall2013}, 
one can write the Boltzmann equation 
in local coordinates $(x,p)$ as
$$
L_X f=
p^\alpha \partial_{x^\alpha} f
- \Gamma^i_{\alpha\beta}(x)\,p^\alpha p^\beta\,\partial_{p^i} f
+ \cdots,
$$
so the coefficients in the momentum derivatives 
already carry the full $x$-dependence of the metric and connection. 
Here $x=(x^\alpha)_{\alpha=0}^3$ denotes spacetime 
coordinates, with $x^0=t$ the time variable 
and $(x^1,x^2,x^3)$ the spatial variables, and
$\Gamma^i_{\alpha\beta}(x)$ are the Christoffel symbols
of the Levi-Civita connection associated with the metric $g$. 
A key observation in \cite{LeeRendall2013} is that, 
by passing to orthonormal frame variables $v^\mu$, 
the collision operator can be written in exactly the same algebraic form 
as in special relativity; in this sense the curved-space collision operator 
can be ``flattened'' (which is desirable). 
However, this comes at the price that the transport part 
of the equation turns into
$$
v^\mu e^\alpha_\mu(x)\,\partial_{x^\alpha} f
+ \text{(connection terms in $x,v$)}\cdot\nabla_v f,
$$
where $e^\alpha_\mu(x)$ is an $x$-dependent tetrad 
(a local orthonormal frame on $(M,g)$). 
Thus, the transport field even becomes a nonlinear function 
$a(x,v)=e^\alpha_\mu(x)v^\mu$ which varies with $x$ 
through the geometry of $(M,g)$, and this $x$-dependence 
reflects curvature that cannot be 
straightened away unless the spacetime is flat. 

The celebrated global existence theory 
for renormalized solutions 
\`a la DiPerna and Lions~\cite{DiPerna:1989ab}, 
which is by now classical in the flat, spatially 
homogeneous setting with constant transport velocity, 
is open for heterogeneous Boltzmann equations 
on curved backgrounds or rough geometries 
(but see \cite{Dudynski:1992aa,Jiang:2008aa} for 
the relativistic Boltzmann equation). 
These manifold-based models therefore 
provide a natural class of applications 
where heterogeneous velocity 
averaging---allowing fully variable coefficients 
in the transport field---are genuinely needed, and where 
our main result can serve as a key compactness tool in future 
existence and stability theories.  
Similar considerations also apply to the simpler 
Einstein-Vlasov system, where the Vlasov equation 
describes transport along the geodesic flow with
coefficients depending on the spacetime point $(t,x)$ 
through the underlying metric 
(see, for example, \cite{Rein:1992aa}).

We note in passing that, although the 
Einstein-Boltzmann framework show that 
curved backgrounds can produce heterogeneous 
transport fields that resist straightening, not 
every manifold-based kinetic model 
behaves this way. In the geometric linear 
Boltzmann setting of \cite{Han-Kwan:2015aa}, 
the quadratic Hamiltonian permits a change of 
variables leading to a constant transport field, so 
classical homogeneous velocity averaging applies almost directly.

\subsection{Stochastic kinetic models}
Stochastic perturbations have recently become a central theme in
kinetic theory and mathematical fluid mechanics,
where noise is introduced both to model unresolved
turbulent effects and to exploit its regularizing and
stabilizing properties. This direction has been actively
pursued in recent years by many authors
(see, e.g., \cite{Breit:2018aa,Debussche:2010fk,Debussche:2012rt,
fehrman2023nonequilibrium,Flandoli:2008vn,
Flandoli:2010yq,FlandoliLuongo2023} among many others)
in the analysis of stochastic transport, nonlinear SPDEs,
and Navier–Stokes dynamics. 
In several turbulence-inspired models, such
as Kraichnan-type stochastic transport \cite{Kraichnan1968} (see also
\cite{FlandoliLuongo2023,LototskyRozovsky2017}), the
random velocity fields belong only to fractional Sobolev
or H\"older classes with regularity exponent $<1$, so the
associated drift coefficients are far from smooth.

Our stochastic velocity averaging theorem 
fits naturally into this context: it provides a tool 
for kinetic models with noise and remains effective 
in the presence of non-smooth, heterogeneous transport 
and noise coefficients. In particular, it paves the way 
for strong compactness results for stochastic kinetic equations \cite{Debussche:2010fk,Debussche:2012rt,
Karlsen:2022aa,Punshon-Smith:2018aa} and 
for global existence theories for Boltzmann-type equations 
with heterogeneous (possibly discontinuous) 
and stochastic transport, thereby 
extending the theory of \cite{DiPerna:1989ab}.

A representative example is the Boltzmann model of 
\cite{Punshon-Smith:2018aa}, which incorporates 
transport noise in the velocity direction $v$ 
and serves as a prototype for stochastic 
Boltzmann dynamics. The existence of renormalized  
solutions to this equation was established 
in \cite{Punshon-Smith:2018aa} in the homogeneous 
transport regime, under a structural assumption
on the noise-related terms. That work also provides a 
discussion of the physical motivation and relevance 
of the stochastic Boltzmann model, 
together with many pertinent references.
As in the classical setting \cite{DiPerna:1989ab}, 
the existence proof in \cite{Punshon-Smith:2018aa} 
relies in a crucial way on velocity averaging estimates.
Motivated by this, one may apply our stochastic  
averaging result to Boltzmann-type equations 
that incorporate heterogeneous transport 
and stochastic perturbations, of the form
\begin{equation}\label{eq:boltzmann}
	\begin{split}
		d f&+ \Div_x\bigl(a(x,v) f\bigr)\, dt
		= \Div_v \bigl(A(x,v)\nabla_v f\bigr)\, dt
		\\ &\qquad
		+ G(x,v,f)\, dW(t)
		+ Q(f,f)\, dt,
	\end{split}
\end{equation}
where $a(x,v)$ is the (possibly irregular and heterogeneous) 
spatial drift field, $A(x,v)$ is an anisotropic 
velocity-diffusion matrix, $G(x,v,f)\, dW$ encodes external 
random perturbations, $f=f(t,x,v)$ is the particle density, 
and $Q(f,f)$ is the Boltzmann collision operator. 

Here the coefficients $A$ and $G$ may be 
regarded as given and fairly general, 
but in applications one may imposes 
structural conditions such as ``superparabolicity". 
Informally, this means that the noise 
is arranged so that its It\^o correction does 
not dominate the velocity dissipation 
already encoded in the second-order $A$-term; 
in the extremal case of exact cancellation, the net 
effect of the noise leaves the 
underlying equation essentially hyperbolic. 
The model of \cite{Punshon-Smith:2018aa} 
provides a concrete example of this: 
there the transport noise enters in Stratonovich form in 
the velocity direction $v$, and after applying the 
Stratonovich-It\^o conversion rule the equation can 
be cast into the form \eqref{eq:boltzmann} 
with $a(x,v)\equiv v$ and a precise algebraic relation 
between $A$ and $G$. 

The model \eqref{eq:boltzmann} fits 
into the abstract form \eqref{eq-s} by viewing the collision 
and diffusion terms as part of the source. 
Suitable structural and growth assumptions on the 
coefficients $a$, $A$, $G$, and $Q$ are of course 
required to ensure that entropy and moment bounds continue to 
hold in an appropriate form for \eqref{eq:boltzmann}.

\subsection{Homogenization and asymptotic limits} 
Beyond existence theories, the same 
compactness mechanism provides a tool 
for homogenization of heterogeneous Boltzmann-type equations 
\eqref{eq:boltzmann} and related kinetic models 
with rough coefficients in complex geometries. 
A key reference is the work of Golse and collaborators 
\cite{Bernard:2010aa,CagliotiGolse:03,Golse:10}, 
who studied the linear Boltzmann equation 
in periodically perforated domains via an 
extended phase-space formulation. There, the microscopic 
geometry---periodic holes at a critical scaling---induces in the 
limit an additional variable (the time since the last collision) 
and leads to effective damping and memory effects. 
This shows that even in linear settings with smooth coefficients, 
the homogenization of kinetic equations can produce subtle 
macroscopic phenomena that hinge 
on fine compactness (averaging)  properties.

Our results makes it possible to generalize 
this theory to equations with irregular drifts, 
nonlinear collision operators, stochastic perturbations, 
and general manifold geometries. It also connects naturally to 
the theory of multiscale limits and diffusion approximations 
in kinetic theory. Classical diffusion and 
radiative-transfer limits 
\cite{BardosGolsePerthameSentis:88,Bensoussan:1978aa} 
rely on periodic or smooth structures, whereas the present 
heterogeneous velocity averaging 
theorem allows one to treat non-periodic, ergodic, 
or stationary random environments in both deterministic 
and stochastic settings. This opens the way to homogenized 
kinetic and radiative-transfer equations on manifolds 
or random domains with minimal regularity of the drift field, 
and to couplings with degenerate diffusion operators in 
the ultra-parabolic framework of \cite{Panov:2009oq}. 

\subsection{Summary of potential applications} 
All the models mentioned above share a 
common feature: they involve kinetic equations 
with genuinely heterogeneous and potentially irregular
transport fields, possibly coupled to stochastic effects. 
To treat all these models within a unified framework, 
one needs a heterogeneous $L^p$ velocity averaging theory. 
The $L^p$ setting is natural because the 
equations of interest involve transport 
fields $a(x,v)$ that depend on $x$ and may be non-smooth, 
placing them beyond the reach of classical 
microlocal defect measure theory. 

Working instead with $u\in L^p$, for possibly small $p>1$, 
provides exactly the flexibility needed to match the 
integrability structure of renormalized solutions for
Boltzmann- and Vlasov-type equations, and 
heterogeneous $L^p$ averaging continues to yield
compactness even when the drift lacks continuity. 
At the same time, this makes the
heterogeneous theory consistent with the homogeneous 
$L^p$ averaging of DiPerna, Lions, and
Meyer \cite{DiPerna-etal:91} and of Tadmor 
and Tao \cite{Tadmor:2006vn}, where averaging
already operates in the ``$p>1$" regime; in 
terms of compactness, our results can be 
viewed as the first heterogeneous
extension of that classical $L^p$ framework.

\section{Deterministic velocity averaging}\label{sec:deterministic}

This section presents the main result along 
with its proof, which relies on the concept of 
$H$-distributions discussed in the appendix. 
Before stating the theorem and proving it, we outline the 
main ideas of the argument.
Our objective is to establish that if $u_n\weak 0$ 
in $L^p$ with $p>1$, then (along a not relabelled 
subsequence) the velocity averages 
$\bar u_n:=\actionb{u_n}{\rho}\to 0$ in $L^1_{\loc}$. 
To this end, one expresses $\abs{\bar u_n}$ as the product 
$\bar u_n V_n$, where $V_n=\sign(\bar u_n)$. We then pass to a 
subsequence (not relabelled) converging weak--$\star$ 
in $L^\infty$ to a function $V$. Then, we set $v_n:=V_n-V$, 
which converges weak-$\star$ to zero. 
The goal is to prove that the distributional limit 
of the product sequence $\bar u_nv_n$ is zero. If this is the case, then it 
follows immediately that $\int \ph \abs{\bar u_n}\to 0$ for any 
compactly supported continuous function $\ph$, so that $\bar u_n$ 
converges strongly to zero in $L^1_{\loc}$. 

To achieve this, the objective is to characterize the limit 
of more general products of the 
form $\bar u_n \cA_{\psi(\mxi/\abs{\mxi})}(v_n)$, 
where $\cA_\psi$ is a multiplier operator with symbol $\psi$. 
The function $\psi$ can be any $C^{d+1}$ function defined 
on the unit sphere $\S^d\subset \R^{d+1}$, and 
$\mxi=(\mxi_0,\mxi')\in \R^{d+1}$ 
is the dual (Fourier) variable of $(t,\mx)$, where functions 
defined for $t\geq 0$ are extended by zero for $t<0$. 
It is worth noting that $\cA_\psi$ becomes the 
identity operator when $\psi\equiv 1$, so 
one recovers  the original product sequence $\bar u_nv_n$ for a 
special choice of the ``dual" test function $\psi$. 

In essence, the distributional limit of 
$\bar u_n \cA_{\psi(\mxi/\abs{\mxi})}(v_n)$ is 
characterized by an $H$-distribution $B$. This distribution 
can be seen as a weakly-$\star$ measurable mapping 
that belongs to the Bochner space 
$L^r_{w\star}\bigl(\R^{d+1+m};(C^{d+1}(\S^d))'\bigr)$ 
for any $r\in (1,p)$. For details and references, see 
Appendix \ref{sec:appendix-H-distribution}. 
If the solutions $u_n$ exhibit enough uniform 
integrability (i.e., if $p$ is large), the functional 
$B$ becomes measure-valued, 
$B\in L^r_{w\star}\bigl(\R^{d+1+m};\cM(\S^d)\bigr)$, 
which is a simpler case.

To prove the next theorem and the strong convergence of 
the velocity averages of $u_n$, we rely on the localization principle 
for the $H$-distribution $B$, which shares the differential structure 
of the kinetic PDEs satisfied by $u_n$. By carefully applying the 
non-degeneracy condition \eqref{non-deg}, we show 
that $B=0$, which yields the desired conclusion 
and leads to the theorem's proof.

\begin{theorem}\label{thm:main-result-determ}
For each $n\in \N$, let $u_n$ satisfy the kinetic equation \eqref{eq-1}, 
assuming that the conditions \eqref{eq:intro-ass1}, \eqref{eq:intro-ass2},  
and \eqref{eq:intro-ass3} hold. Suppose that the sequence $\seq{u_n}_{n\in \N}$ 
converges weakly to zero in $L^p(\R^{d+1}_+\times \R^m)$,
\begin{equation}\label{eq:hn-weak-Lp}
	u_n \weakn 0 \quad 
	\text{in $L^p(\R^{d+1}_+\times \R^m)$}.
\end{equation}
Assuming the non-degeneracy condition given by \eqref{non-deg}, 
a subsequence (not relabelled) of the sequence $\seq{u_n}$ exists, such 
that for any $\rho \in C^{N_\mlambda}_c(\R^m)$,
\begin{equation}\label{res-1}
	\lim_{n\to \infty}\actionb{u_n}{ \rho} 
	=\lim_{n\to \infty}\int_{\R^m}  
	u_n(t,\mx,\mlambda)
	\rho(\mlambda) \, d\mlambda = 0
	\quad \text{in $L^1_{\loc}(\R^{d+1}_+)$}.
\end{equation}
\end{theorem}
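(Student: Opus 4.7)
The plan is to follow the roadmap sketched in the paragraphs preceding the theorem. Setting $\bar u_n := \actionb{u_n}{\rho}$ and $V_n := \sign(\bar u_n) \in L^\infty$, I extract (along a not-relabeled subsequence) a weak-$\star$ limit $V$ of $V_n$ and define $v_n := V_n - V$, so that $v_n \weakstar 0$. Since $\bar u_n \weakn 0$ in $L^p(\R^{d+1}_+)$, the decomposition $\abs{\bar u_n} = \bar u_n V_n = \bar u_n v_n + \bar u_n V$ reduces \eqref{res-1} to showing
\[
   \lim_{n\to\infty} \int_{\R^{d+1}_+} \varphi(t,\mx)\, \bar u_n(t,\mx)\, v_n(t,\mx)\, dt\,d\mx = 0
   \qquad \forall\, \varphi \in C_c(\R^{d+1}_+).
\]

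Next I would embed this product inside the broader family of pairings
\[
   I_n(\varphi,\rho,\psi) := \iiint u_n \, \cA_{\psi(\mxi/\abs{\mxi})}(v_n)\, \varphi(t,\mx)\,\rho(\mlambda)\, dt\,d\mx\,d\mlambda,
\]
where $\cA_\psi$ denotes the Fourier multiplier (in $(t,\mx)$, with functions extended by zero for $t<0$) with symbol $\psi \in C^{d+1}(\S^d)$. By the $H$-distribution existence result in Appendix~\ref{sec:appendix-H-distribution}, along a further subsequence there is an $H$-distribution $B \in L^r_{w\star}(\R^{d+1+m};(C^{d+1}(\S^d))')$, for every $r \in (1,p)$, such that $I_n(\varphi,\rho,\psi) \to \actionb{B}{\varphi \otimes \rho \otimes \psi}$. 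Taking $\psi \equiv 1$ shows that the reduction above will follow once one proves $B = 0$.

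To reach this, I would derive a localization principle by applying $\cA_{\psi(\mxi/\abs{\mxi})}$ to the kinetic equation \eqref{eq-1}, pairing against $\varphi(t,\mx)\rho(\mlambda) v_n$, and passing to the $H$-distribution limit. The right-hand side $\pa_\mlambda^\malpha g_n$ contributes zero, since $g_n$ is precompact in $W^{-1,\alpha}$ while $v_n \weakstar 0$; the transport term requires commuting $\cA_\psi$ with multiplication by the $L^q$ drift $\mff$, which is exactly where the hypothesis $\tfrac1p + \tfrac1q < 1$ makes the product admissible in the dual spaces entering $B$. The resulting identity reads
\[
   \actionb{B}{\varphi \otimes \rho \otimes \bigl[(\xi_0 + \actionb{\mff(t,\mx,\mlambda)}{\mxi'})\,\psi(\mxi/\abs{\mxi})\bigr]} = 0,
\]
so $B$, as a distribution in $\mxi$, is annihilated by multiplication by the principal symbol. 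The non-degeneracy condition \eqref{non-deg} asserts that for a.e.\ $(t,\mx)$ and every unit $\mxi$, the $\mlambda$-set on which $\xi_0 + \actionb{\mff}{\mxi'}$ vanishes is Lebesgue-null; hence multiplication by this symbol is injective after integration against $\rho\,d\mlambda$ and $\varphi\,dt\,d\mx$, forcing $B = 0$ and thus \eqref{res-1}.

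The main obstacle is the localization step. In the classical $L^2$/$H$-measure setting it rests on positivity of the measure and on standard Fourier-multiplier commutator estimates of Coifman--Rochberg--Weiss type; here $B$ is only a distribution of finite order in $\mxi$, so one must work in the anisotropic framework of Appendix~\ref{sec:appendix-H-distribution} and verify that the commutator $[\cA_\psi,\, \mff\,\cdot\,]$, together with the error from interchanging $\mff$ with $\cA_\psi$, produce terms whose $H$-distribution limit vanishes. The integrability threshold $\tfrac1p + \tfrac1q < 1$ is precisely what enables these commutator estimates in the relevant dual spaces, making this the technical heart of the argument.
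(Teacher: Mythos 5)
Your overall architecture matches the paper's: the sign-function decomposition $\bar u_n V_n$, the correctors $v_n = V_n - V$, the $H$-distribution $B$ generated by $\seq{u_n}$ and $\seq{v_n}$, the localization principle obtained by testing the kinetic equation, and the use of \eqref{non-deg} to kill $B$. However, there is a genuine gap at the final and most difficult step. You write that since the characteristic set $\{\mlambda : \xi_0 + \actionb{\mff}{\mxi'} = 0\}$ is Lebesgue-null, ``multiplication by this symbol is injective \ldots forcing $B=0$.'' For an $H$-\emph{measure} this is a legitimate (if still nontrivial) argument: one divides by the regularized symbol and uses dominated convergence against the measure, exploiting absolute continuity of its $(t,\mx)$-projection (this is exactly Lemma \ref{loc-p}). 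But $B$ is an $H$-\emph{distribution}: it has finite order in $\mxi$ and is not a measure, so there is no support, no a.e.\ statement, and no dominated convergence available. The quotient $\psi/(\xi_0+\mff\cdot\mxi')$ is not an admissible test function (it is unbounded near the characteristic variety and not $C^{d+1}$ in $\mxi$), so ``injectivity of multiplication by the symbol'' does not follow from the null-set statement. Bridging this is the actual technical heart of the theorem and the reason the $p<2$ case was open.

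The paper's resolution is a four-parameter approximation that you would need to reproduce or replace: test against the $\delta$-regularized symbol $(\xi_0+\mff\cdot\mxi')/(\abs{\xi_0+\mff\cdot\mxi'}^2+\delta\abs{\mxi'}^2)$ cut off on $\{\abs{\mff}\le M\}$, which reduces matters to showing $\actionb{\chi_M\ph\rho\psi H_\delta}{B}\to 0$; approximate $\mff$ by simple functions $\mff^{(N)}$ so that $H_\delta$ becomes a finite sum of genuine Fourier multipliers; and, crucially, split $u_n = T_\ell(u_n) + T^\ell(u_n)$. The bounded part $T_\ell(u_n)$ generates an honest $H$-measure $\mu_\ell$, to which Lemma \ref{loc-p} and the non-degeneracy condition apply as $\delta\to 0$; the tail $T^\ell(u_n)$ is controlled uniformly in $N,\delta,M$ by Lemma \ref{crucial} and vanishes as $\ell\to\infty$ by the uniform $L^p$ bound. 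Without this truncation device (or an equivalent), the passage from the localization principle to $B=0$ does not go through. A secondary point: in the localization step the paper tests against $\cT_{-1}\circ\cA_{\bar\psi}(v_n)$ rather than $\cA_{\bar\psi}(v_n)$; the Riesz potential is needed both to land the test function in $W^{1,\alpha'}$ (so the pairing with $g_n\in W^{-1,\alpha}$ makes sense) and to turn the first-order derivatives into bounded order-zero symbols $\mxi/\abs{\mxi}$; your commutator-based description of this step would need that correction, though it is standard compared with the main gap.
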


\begin{remark}\label{rem:non-zero-weak-limit}
Without loss of generality, we can assume that $u_n$ converges 
weakly to zero, as given by \eqref{eq:hn-weak-Lp}. 
If this is not the case and $u_n$ weakly converges to $u$ instead, then 
based on the general assumption \eqref{eq:intro-ass3}, we can 
focus on the equation for $\tilde{u}_n := u_n - u.$
This is essentially the same equation, but with $g_n$ 
substituted by a modified source $\tilde{g}_n$. 
The modified source $\tilde{g}_n$ accounts for 
$\pa_t u + \Div_\mx \bigl ( \, \mff(t,\mx,\mlambda) u \, \bigr)$.
\end{remark}

\begin{proof} 
In what follows, it is always understood that the 
functions $u_n=u_n(t,\mx,\mlambda)$ and the later introduced 
$v_n=v_n(t,\mx)$ are defined 
respectively on $\R^{d+1+m}$ and $\R^{d+1}$, by extension 
taking the value zero for $t<0$.

To analyse the $H$-distribution (denoted by $B$), we first derive the 
localization principle from the kinetic equation \eqref{eq-1}, as is customary. 
The localization principle helps describing the support of 
the micro-local defect functional $B$. 
For fixed $\tilde{\rho}\in C^{N_\mlambda}_c(\R^m)$ and 
$\tilde{\ph}\in C^1_c(\R^{d+1})$, let $V$ be the 
${L}^\infty(\R^{d+1})$ weak-$\star$ limit obtained along 
a subsequence (not relabelled) of the 
functions $V_n$ defined as follows:
\begin{equation}\label{vn}
	V_n(t,\mx)=
	\begin{cases}\displaystyle
		\frac{\tilde{\varphi}(t,\mx)
		\overline{\actionb{u_n}{\tilde{\rho}}(t,\mx)}}
		{\abs{\actionb{u_n}{\tilde{\rho}}(t,\mx)}}, 
		& \actionb{u_n}{\tilde{\rho}}(t,\mx) \neq 0
		\\ 
		0, & \mathrm{otherwise}
	\end{cases}.
\end{equation}  
Let $v_n=V_n-V$, and observe that the sequence 
$\seq{v_n}$ satisfies the conditions of Theorems 
\ref{thm:H-distr} and \ref{thm:H-distr-extend}  
on the existence of $H$-distributions, 
that is, $\seq{v_n}$  is a uniformly compactly supported  
sequence $\seq{v_n}\subset L^\infty(\R^{d+1})$ such 
that (along a non-relabelled subsequence) 
$v_n\weakstar 0$ in $L^\infty(\R^{d+1})$ and thus
\begin{equation}\label{eq:vn-weak-conv}
	v_n\weakn 0 
	\quad \text{in $L^{p_v}(\R^{d+1})$ for any $p_v<\infty$}.
\end{equation}
Let us consider a test function of the form
\begin{equation}\label{eq:test-function-phi-n}
	\begin{split}
		& \phi_n(t,\mx,\mlambda)=
		\rho(\mlambda)\tilde{\rho}(\mlambda) 
		\ph(t,\mx)\tilde{\ph}(t,\mx) \overline{\left(\cT_{-1}\circ 
		\cA_{\overline{\psi}\left(\mxi/\abs{\mxi}\right)}(v_n)\right)(t,\mx)},
		\\ & 
		\quad \text{where} \quad 
		\psi \in C^{d+1}(\S^{d}), \quad 
		\ph \in C^1_c(\R^{d+1}), \quad 
		\rho \in C^{N_\mlambda}_c(\R^m).
	\end{split}
\end{equation}
Note that $\cT_{-1}$ denotes the 
Riesz potential operator, i.e., the multiplier operator with the 
symbol $\frac{1}{\abs{\mxi}}$, recalling that $\mxi=(\xi_0,\xi_1,\ldots,\xi_d)
=(\xi_0,\mxi')$. By Lemma \ref{thm:riesz}, we know 
that $\cT_{-1}(v_n)\to 0$ as $n\to \infty$ 
strongly in $L^r_{\loc}(\R_+^{d+1})$ for any $r\in (1,\infty)$. 
Thus, $\seq{\phi_n}$ converges {strongly} to zero in 
$W_{\loc}^{s,r}(\R^{d+1}\times \R^m)$ for any $s<1$ 
and $r\in (1,\infty)$, see also 
the discussion after \eqref{eq:test-func-random}. 
Moreover, the same holds for $\pa_\mlambda^{\malpha} \phi_n$, 
noting that the only dependence on $\mlambda$ 
in $\phi_n$ comes from $\rho$ and $\tilde{\rho}$, 
see \eqref{eq:test-function-phi-n}.

We test the kinetic equation \eqref{eq-1} against the function $\phi_n$ 
and then let $n\to \infty$ along the subsequence defining 
the $H$-distribution $B$ generated by the sequences 
$\seq{\tilde{\rho} \tilde{\varphi} u_n}$ 
and $\seq{v_n}$, see Theorem \ref{thm:H-distr}. 
As the sequence $\seq{g_n}$ is strongly precompact in 
$W^{-1,\alpha}(\R^{d+1}\times \R^m)$ for some $\alpha>1$, 
see \eqref{eq:intro-ass3}, 
and $v_n$ is uniformly bounded in $L^\infty(\R^{d+1})$ (with 
uniform compact support), we can conclude with the following 
localization principle for $B$ (similar details can be 
found in \cite[Theorem 3.1]{LazarMitrovic:13})
\begin{equation}\label{v2} 
	\begin{split}
		&\actionb{\rho \ph \psi 
		\bigl(1,\mff(t,\mx,\mlambda)\bigr)\cdot \mxi}{B}  
		=\actionb{\rho \ph \psi 
		\bigl(\xi_0+ \mff \cdot \mxi'\bigr)}{B} 
		\\  & \,\, 
		= \lim_{n\to \infty}\int_{\R^{d+1+m}}
	 	\rho(\mlambda) \tilde{\rho}(\mlambda) 
		\ph(t,\mx) \tilde{\ph}(t,\mx) u_n(t,\mx,\mlambda) 
		\\ & \quad\qquad\qquad\qquad  \times
		\bigl(1,\mff(t,\mx,\mlambda)\bigr)  
		\cdot \overline{\cA_{\overline{\psi}\bigl(\frac{\mxi}{\abs{\mxi}}\bigr)
		\frac{\mxi}{\abs{\mxi}}}(v_n)}\, d\mlambda\, d\mx \, dt
		\\ &\,\,  
		=\frac{(-1)^{N_\mlambda}}{2\pi i}\lim_{n\to \infty}
	 	\action{\pa^{\malpha}_\mlambda (\rho(\mlambda)\tilde{\rho}(\mlambda)) 
		\ph(t,\mx) \tilde{\ph}(t,\mx) 
		\overline{\cA_{\overline{\psi}\left(\frac{\mxi}{\abs{\mxi}} \right)
		\frac{1}{\abs{\mxi}}}(v_n)(t,\mx)}}{g_n} =0,
	\end{split}
\end{equation} 
exploiting that 
$$
(t,\mx,\mlambda)\mapsto 
\pa^{\malpha}_\mlambda(\rho(\mlambda)\tilde{\rho}(\mlambda)) 
\ph(t,\mx) \tilde{\ph}(t,\mx) 
\overline{\cA_{\overline{\psi}\left(\!\frac{\mxi}{\abs{\mxi}}\right)
\frac{1}{\abs{\mxi}}}(v_n)(t,\mx)}
$$ 
is bounded in the Sobolev space 
$W^{1,\alpha'}(\R^{d+1}\times \R^m)$ and $g_n\to 0$ 
strongly in the dual space $W^{-1,\alpha}(\R^{d+1}\times \R^m)$, 
where $1/\alpha+1/\alpha'=1$. Based on \eqref{eq:intro-ass3}, it 
can be concluded that $g_n$ strongly converges 
to $g$ in $W^{-1,\alpha}(\R^{d+1}\times \R^m)$ for some sub-sequential 
limit $g\in W^{-1,\alpha}(\R^{d+1}\times \R^m)$.  
Without loss of generality, we may assume that $g=0$; otherwise, 
consider $g_n-g$ and use the fact that 
$\cA_{\psi\left(\frac{\mxi}{\abs{\mxi}} \right)\frac{1}{\abs{\mxi}}}(v_n)$ 
weakly converges to zero in $W^{1,\alpha'}_{\loc}(\R^{d+1}\times \R^m)$. 
This implies
\begin{equation}\label{eq:ass-g-zero}
	\lim_{n\to \infty} \action{\pa^{\malpha}_\mlambda
	(\rho(\mlambda) \tilde{\rho}(\mlambda)) 
	\ph(t,\mx) \tilde{\ph}(t,\mx) 
	\overline{\cA_{\overline{\psi}\left(\frac{\mxi}{\abs{\mxi}} \right)
	\frac{1}{\abs{\mxi}}}(v_n)(t,\mx)}}{g} =0.
\end{equation}

In \eqref{v2}, $B$ denotes the $H$-distribution (see 
Theorem \ref{thm:H-distr}) generated by the two 
sequences $\seq{\tilde{\rho}\tilde{\ph} u_n}\subset L^p(\R^{d+1+m})$ and 
$\seq{v_n}\subset L^\infty_c(\R^{d+1})$. More specifically, $B$ is a continuous 
bilinear functional defined on the tensor product space 
$L^{r'}(\mathbb{R}^{d+1+m}) \otimes C^{d+1}(\mathbb{S}^d)$, where $r'$ denotes 
the dual exponent of a number $r\in (1,p)$ such that 
$\frac{1}{p}+\frac{1}{r'}+\frac{1}{p_v}=1$, where $p_v$ is taken 
from \eqref{eq:vn-weak-conv}. In other words, $r'=\frac{p_vp'}{p_v-p'}$. 
Due to the arbitrariness of $p_v$, the exponent $r'$ ($>p'$) 
can be made arbitrary close to $p'$.

Given a Borel set $S \subset \mathbb{R}^D$ ($D\ge 1$) 
and a separable Banach space $\bigl(X, \norm{\cdot}_X\bigr)$, we 
denote the space of $X$-valued $p$-integrable functions on $S$ with respect 
to the Lebesgue measure by $L^p(S;X)$, where $p$ belongs 
to the interval $[1,\infty)$. We will use the Bochner integral 
to define integrability. For general background material 
on Bochner spaces, we refer to \cite[Chapters 1 \& 2]{Hytonen:2016aa}. 
According to Theorem \ref{thm:H-distr-extend}, 
the $H$-distribution $B$ can be viewed as a continuous 
linear functional on the Bochner space $L^{r'}(\R^{d+1+m};C^{d+1}(\S^d))$, 
and thus as an object in $L^r_{w\star}\bigl(\R^{d+1+m};(C^{d+1}(\S^d))'\bigr)$.  
We refer to Appendix \ref{sec:appendix-H-distribution} for further details.

Notationally, the $H$-distribution $B$ acts upon any test function
$\Psi=\Psi(t,\mx,\mlambda,\mxi)$ that belongs to 
$L^{r'}(\R^{d+1+m};C^{d+1}(\S^d))$ 
through the notation $\action{\Psi}{B}$, or occasionally 
denoted as $B(\Psi)$. In view of 
assumption \eqref{eq:intro-ass2}, $\mff\in L^q$ with $q>p'$ and 
thus, by picking $r$ such that $p'<r'\leq q$, 
$\Psi:=\rho(\mlambda) \ph(t,\mx) \psi(\mxi) 
\bigl(\xi_0+ \mff (t,\mx,\mlambda)\cdot \mxi'\bigr)$ 
is such an admissible test function. 

Numerous independent variables and functions dependent on 
these variables are involved. To aid in the understanding of which 
variable each function depends on, we continue to include 
the independent variables while discussing the functions within a 
pairing with the $H$-distribution $B$ (and other pairings), 
which means that sometimes we write 
$\actionb{\Psi(t,\mx,\mlambda,\mxi)}{B}$ 
instead of $\action{\Psi}{B}$.  

Going back to \eqref{v2}, as $\ph$, $\rho$, and $\psi$ 
are arbitrary, we can conclude that
\begin{align}\label{v3} 
	\action{\bigl(\xi_0+ \mff \cdot \mxi'\bigr)\Phi}{B}=0,
	\quad  \Phi=\Phi(t,\mx,\mlambda,\mxi) 
	\in L^{a}(\R^{d+1+m};C^{d+1}(\S^d)),
\end{align} 
where $\frac{1}{a}+\frac{1}{q}=\frac{1}{r'}<\frac{1}{p'}
\Longleftrightarrow a=\frac{qr'}{q-r'}
>\frac{qp'}{q-p'}$. Note that this choice of $a$ guarantees that
$\bigl(\xi_0+ \mff \cdot \mxi'\bigr)\Phi \in 
L^{r'}(\R^{d+1+m};C^{d+1}(\S^d))$, remembering that $\mff \in L^q$ 
($p'<r'\leq q$). Indeed, recalling that the algebraic tensor product 
$L^a(\R^{d+1+m})\otimes C^{d+1}(\S^d)$
is dense in $L^a(\R^{d+1+m};C^{d+1}(\S^d))$, see for example 
\cite[Lemma 1.2.19]{Hytonen:2016aa}, it is enough to verify that 
the equation \eqref{v3} holds for any function 
$\Phi$ from $C_c(\R^{d+1})\otimes C_c(\R^m)
\otimes C^{d+1}(\S^{d})$ or from $C_c(\R^{d+1+m})
\otimes C^{d+1}(\S^{d})$.

In what follows, for a fixed but arbitrary $M>\sqrt{3}$, 
we denote by $\chi_M(t,\mx,\mlambda)$ the characteristic function of the set 
$\seq{(t,\mx,\mlambda):\abs{\mff(t,\mx,\mlambda)}\leq M}$:
\begin{equation*}
	\chi_M(t,\mx,\mlambda)=\En_{\seq{(t,\mx,\mlambda):
	\abs{\mff(t,\mx,\mlambda)}\leq M}}(t,\mx,\mlambda).
\end{equation*}
Then we specify
$$
\Phi=\Phi(t,\mx,\mlambda,\mxi)
=\frac{\chi_M(t,\mx,\mlambda)\varphi(t,\mx) \rho(\mlambda)\psi(\mxi)
\bigl(\xi_0+\mff(t,\mx,\mlambda)\cdot \mxi'\bigr)}
{\abs{\xi_0+\mff(t,\mx,\mlambda) \cdot \mxi'}^2
+\delta \abs{\mxi'}^2},
$$ 
for some arbitrary $\delta>0$ and $\varphi \otimes\rho\otimes \psi
\in C_c(\R^{d+1})\otimes C_c(\R^m) \otimes C^{d+1}(\S^{d})$. 
It is evident that the functions $\Phi$ and 
$\bigl(\xi_0+\mff \cdot \mxi'\bigr)\Phi$ 
both belong to $L_c^\infty(\R^{d+1+m};C^{d+1}(\S^d))$. 

Our objective is to obtain $B\equiv 0$ by 
letting $\delta\to 0$ and using the non-degeneracy 
condition \eqref{non-deg}. Initially, let us observe that
\begin{equation}\label{v4}
	\begin{split}
		0 & =\action{\bigl(\xi_0+\mff \cdot \mxi'\bigr)\Phi}{B} 
		\\ & 
		=\action{\frac{\chi_M(t,\mx,\mlambda)
		\varphi(t,\mx)\rho(\mlambda) 
		\psi(\mxi)\abs{\xi_0+\mff(t,\mx,\mlambda) \cdot \mxi'}^2}
		{\abs{\xi_0+\mff(t,\mx,\mlambda) \cdot \mxi'}^2
		+\delta \abs{\mxi'}^2}}{B}
		\\ & 
		=\actionb{\chi_M(t,\mx,\mlambda)
		\varphi(t,\mx)\rho(\mlambda) \psi(\mxi)}{B}
		\\ & \qquad 
		-\action{\frac{\chi_M(t,\mx,\mlambda)
		\ph(t,\mx)\rho(\mlambda) \psi(\mxi) \delta \abs{\mxi'}^2 }
		{\abs{\xi_0+\mff(t,\mx,\mlambda) \cdot \mxi'}^2
		+\delta \abs{\mxi'}^2}} {B}
		\\ & 
		=\actionb{\chi_M(t,\mx,\mlambda)\varphi(t,\mx)
		\rho(\mlambda) \psi(\mxi)}{B}
		\\ & \qquad
		-\actionb{\chi_M(t,\mx,\mlambda)
		\ph(t,\mx)\rho(\mlambda) \psi(\mxi)
		H_\delta(t,\mx,\mlambda,\mxi)} {B},
	\end{split}
\end{equation} 
where
$$
H_\delta=H_\delta(t,\mx,\mlambda,\mxi)=\frac{\delta \abs{\mxi'}^2}
{\abs{\xi_0+\mff(t,\mx,\mlambda) \cdot \mxi'}^2
+\delta \abs{\mxi'}^2},
$$
which means that we must prove
\begin{equation}\label{v5}
	\lim\limits_{M\to \infty} \lim\limits_{\delta\to 0}
	\actionb{\chi_M(t,\mx,\mlambda)\ph(t,\mx)\rho(\mlambda) \psi(\mxi)
	H_\delta(t,\mx,\mlambda,\mxi)} {B}=0,
\end{equation} in order to conclude that
$\actionb{\varphi\otimes\rho\otimes \psi}{B}=0$ for 
all functions $\varphi \in C_c(\R^{d+1})$, 
$\rho \in C_c(\R^m)$, and $\psi\in C^{d+1}(\S^{d})$. From the latter, 
it follows immediately that $B=0$ in 
$L^r_{w\star}\bigl(\R^{d+1+m};(C^{d+1}(\S^d))'\bigr)$. 

Clearly, $\ph \rho \psi H_\delta\in L^{r'}(\R^{d+1+m};C^{d+1}(\S^d))$. 
In the above, the function $H_\delta$ is defined for $\abs{\mxi}=1$. 
However, this can be readily extended to the 
entirety of $\R^{d+1}\backslash \{0\}$ in terms of $\mxi$ as 
a homogeneous symbol of order zero:
\begin{equation}\label{eq:H-delta-on-Sd}
	\begin{split}
		H_{\delta}(t,\mx,\mlambda,\mxi)
		&=H_{\delta}
		\left(t,\mx,\mlambda,\frac{\mxi}{\abs{\mxi}}\right)
		= \frac{\delta}{\abs{\frac{\xi_0}{\abs{\mxi}}
		+\frac{\mff(t,\mx,\mlambda)\cdot \mxi'}{\abs{\mxi}}}^2
		+\delta \abs{\frac{\mxi'}{\abs{\mxi}}}^2}
		\abs{\frac{\mxi'}{\abs{\mxi}}}^2
		\\ & = \frac{\delta}
		{\abs{\bigl(1,\mff(t,\mx,\mlambda)\bigr)
		\cdot \frac{\mxi}{\abs{\mxi}}}^2
		+\delta \abs{\bigl(0,\mathbf{1}'\bigr)
		\cdot \frac{\mxi}{\abs{\mxi}}}^2}
		\abs{\bigl(0,\mathbf{1}'\bigr)
		\cdot\frac{\mxi}{\abs{\mxi}}}^2,
	\end{split}
\end{equation}
where $\mathbf{1}':=(1,\ldots,1)\in \R^d$. 
By treating the drift-vector $\mff=\mff(t,\mx,\mlambda)$ 
as an element of the Bochner space $L^q(\R^{d+1};L^q(\R^m;\R^d))$, we 
can approximate $\mff$ with simple 
functions $\mff^{(N)}$ of the following form:
\begin{equation}\label{eq:f-approx}
	\mff^{(N)} =\sum\limits_{k=1}^{N} \chi_k(t,\mx)
	\mff_k(\mlambda), \quad \chi_k(t,\mx)=\En_{A_k}, 
	\,\, \abs{A_k}<\infty, \,\,
	\mff_k\in L^q(\R^m;\R^d),
\end{equation} 
so that $\norm{\mff-\mff^{(N)}}_{L^q(\R^{d+1};L^q(\R^m;\R^d))}\to 0$ 
as $N\to \infty$, see, e.g., \cite[Lemma 1.2.19]{Hytonen:2016aa}.  
Given the compactly supported functions $\ph(t,\mx)$ and $\rho(\mlambda)$ 
in \eqref{v4}, it is enough to ask for this convergence on 
a compact set $K\times L \Subset\R^{d+1}\times \R^m$ 
for which $\supp(\ph)\subset K$ and $\supp(\rho)\subset L$:
\begin{equation}\label{eq:f-approx-conv}
	\lim_{N\to \infty} 
	\norm{\mff-\mff^{(N)}}_{L^q(K;L^q(L;\R^d))}=0.
\end{equation}
Without loss of generality, we may assume that the sets 
$A_1,\ldots,A_N$ are disjoint and constitute a 
partition of $K$ such that
\begin{equation}\label{eq:part-unity}
	\sum_{k=1}^N \chi_k(t,\mx)=1, \quad 
	\text{for all $(t,\mx)\in K$}.
\end{equation}
In addition, we may always assume that for any $(t,\mx)\in K$ we have
\begin{equation}\label{bemerkung}
	\text{$\abs{\mff_k(\mlambda)} 
	\leq \abs{\mff(t,\mx,\mlambda)}$
	 a.e. on  $\seq{\chi_k(t,\mx)=1}$}.
\end{equation}

Given $\seq{f^{(N)}}_{N\in\N}$, let us approximate 
$H_\delta$  by a sequence $\seq{H_\delta^{(N)}}_{N\in \N}$ 
of functions of the form
\begin{align*}
	&H_\delta^{(N)}(t,\mx,\mlambda,\mxi)
	=\sum_{k=1}^{N} \psi_{\delta,k}(\mxi;\mlambda) \chi_k(t,\mx), 
	\\ & \quad \text{where} 
	\quad \psi_{\delta,k}(\mxi;\mlambda)
	=\frac{\delta \abs{\mxi'}^2}
	{\abs{\xi_0+\mff_{k}(\mlambda)\cdot \mxi'}^2
	+\delta \abs{\mxi'}^2}.
\end{align*} 
Since $\mxi \mapsto \psi_{\delta,k}(\mxi;\mlambda)$  
can be expressed in terms of $\mxi/\abs{\mxi}$ 
as in \eqref{eq:H-delta-on-Sd}, we may view $\psi_{\delta,k}$ 
(with $\mlambda$ fixed) as an $L^r$, $r\in (1,\infty)$, 
Fourier multiplier with a bound uniform in 
$\delta$, $k$ and $\mlambda$ (see Lemma \ref{Lp-m}).
 
Before continuing, we must verify that
\begin{equation}\label{eq:H-delta-approx}
	\lim_{N\to \infty}\norm{\chi_M \left(H_\delta
	-H_\delta^{(N)}\right)}_{L^q(K\times L;C^{d+1}(\S^d))}=0.
\end{equation}
From the definitions of $H_\delta$ and $H_\delta^{(N)}$, see 
\eqref{eq:H-delta-on-Sd}, and \eqref{eq:part-unity},
\begin{align*}
	\chi_M\pa_{\mxi}^{\malpha}
 	\! \left(H_\delta^{(N)}-H_\delta\right)
	=\chi_M(t,\mx,\mlambda)  
	\sum_{k=1}^N \chi_k(t,\mx)
	\pa_{\mxi}^{\malpha}\bigl(I_k-I\bigr)(t,\mx,\mlambda,\mxi),
\end{align*}
for any $\abs{\malpha}\leq d+1$, where
\begin{align*}
	& I_k  = 
	\frac{\delta}{\abs{\frac{\xi_0}{\abs{\mxi}}+\mff_{k}(\mlambda)\cdot 
	\frac{\mxi'}{\abs{\mxi}}}^2
	+\delta \abs{\frac{\mxi'}{\abs{\mxi}}}^2}
	 \abs{\frac{\mxi'}{\abs{\mxi}}}^2,
	\\ & 
	I = 
	\frac{\delta}{\abs{\frac{\xi_0}{\abs{\mxi}}
	+\mff(t,\mx,\mlambda)\cdot 
	\frac{\mxi'}{\abs{\mxi}}}^2
	+\delta \abs{\frac{\mxi'}{\abs{\mxi}}}^2}
	 \abs{\frac{\mxi'}{\abs{\mxi}}}^2.
\end{align*}
Clearly,
$$
I_k-I=\bigl(\mff_{k}(\mlambda)
-\mff(t,\mx,\mlambda)\bigr)\cdot 
B_k(t,\mx,\mlambda,\mxi),
$$
where
\begin{align*}
	B_k& = \frac{\frac{\mxi'}{\abs{\mxi}}
	\left(2\frac{\xi_0}{\abs{\mxi}}
	+\mff_{k}(\mlambda)\cdot \frac{\mxi'}{\abs{\mxi}}
	+\mff(t,\mx,\mlambda)\cdot \frac{\mxi'}{\abs{\mxi}}\right)
	\delta \abs{\frac{\mxi'}{\abs{\mxi}}}^2}
	{\left(\abs{\frac{\xi_0}{\abs{\mxi}}
	+\mff_{k}(\mlambda)\cdot \frac{\mxi'}{\abs{\mxi}}}^2
	+\delta \abs{\frac{\mxi'}{\abs{\mxi}}}^2\right)
	\left(\abs{\frac{\xi_0}{\abs{\mxi}}
	+\mff(t,\mx,\mlambda)\cdot \frac{\mxi'}{\abs{\mxi}}}^2
	+\delta \abs{\frac{\mxi'}{\abs{\mxi}}}^2\right)},
\end{align*}
so that
\begin{equation}\label{eq:H-N-delta-minus-H-delta}
	\begin{split}
		& \chi_M(t,\mx,\mlambda)
 		\pa_{\mxi}^{\malpha}
 		\! \left(H_\delta^{(N)}-H_\delta\right)
		\\ & \quad =\chi_M(t,\mx,\mlambda)  
		\sum_{k=1}^N \chi_k(t,\mx)
		\bigl(\mff_{k}(\mlambda)-\mff(t,\mx,\mlambda)\bigr)
		\cdot \pa_{\mxi}^{\malpha}B_k(t,\mx,\mlambda,\mxi),
	\end{split}
\end{equation}
for any $\abs{\malpha}\leq d+1$. We can write 
$B_k= B_k^{(1)}B_k^{(2)}B_k^{(3)}$,
where $B_k^{(3)}=\delta \abs{\frac{\mxi'}{\abs{\mxi}}}^2$ and
\begin{align*}
	B_k^{(1)} 
	& =\frac{\frac{\mxi'}{\abs{\mxi}}}
	{\abs{\bigl(1,\mff_k\bigr)
	\cdot \frac{\mxi}{\abs{\mxi}}}^2
	+\delta \abs{\bigl(0,\mathbf{1}'\bigr)
	\cdot \frac{\mxi}{\abs{\mxi}}}^2},
	\\  
	B_k^{(2)} 
	& = \frac{2\frac{\mxi_0}{\abs{\mxi}}+\bigl(\mff_{k}+\mff\bigr)
	\cdot \frac{\mxi'}{\abs{\mxi}}}
	{\abs{\bigl(1,\mff\bigr)
	\cdot \frac{\mxi}{\abs{\mxi}}}^2
	+\delta \abs{\bigl(0,\mathbf{1}'\bigr)
	\cdot \frac{\mxi}{\abs{\mxi}}}^2}.
\end{align*}
Below we will verify that, for $j=1,2,3$,
\begin{equation}\label{eq:B-j-k}
	\abs{\chi_M(t,\mx) B_k^{(j)}(t,\mx,\mlambda,\mxi)}
	\lesssim_{M,\delta} 1, 
	\quad  (t,\mx,\mlambda)\in K\times L, 
	\, \abs{\mxi}=1.
\end{equation}
In fact, for any $\abs{\malpha}\leq d+1$, we have 
a similar bound for the $\malpha$-derivative in $\mxi$ : 
\begin{equation}\label{eq:B-j-k-deriv}
	\abs{\chi_M(t,\mx)\pa_{\mxi}^{\malpha}
	B_k^{(j)}(t,\mx,\mlambda,\mxi)}
	\lesssim_{M,\delta,\abs{\malpha}} 1,
	\quad 
	 (t,\mx,\mlambda)
	\in K\times L, \, \abs{\mxi}=1.
\end{equation}
Assuming that \eqref{eq:B-j-k-deriv} is true, we deduce 
from \eqref{eq:H-N-delta-minus-H-delta} and 
\eqref{eq:f-approx-conv}, see also \eqref{eq:part-unity}, that 
\eqref{eq:H-delta-approx} holds.

To verify \eqref{eq:B-j-k}, it is enough to establish the bound
\begin{equation}\label{eq:chi-M-psi-delta-k-bounded}
	\frac{\chi_M(t,\mx,\mlambda)}
	{\abs{\frac{\xi_0}{\abs{\mxi}}+\mff_{k}(\mlambda)
	\cdot \frac{\mxi'}{\abs{\mxi}}}^2
	+\delta \abs{\frac{\mxi'}{\abs{\mxi}}}^2}
	\leq C_{M,\delta},
	\quad  (t,\mx,\mlambda,\mxi)
	\in K\times L \times (\R^{d+1}\backslash \{0\}),
\end{equation}
and the same bound when $\mff_k(\mlambda)$ is replaced 
by $\mff(t,\mx,\mlambda)$. 
Here, $C_{M,\delta}>0$ is a constant depending on the 
previously fixed (large) number $M$ and the small number $\delta>0$.
To establish \eqref{eq:chi-M-psi-delta-k-bounded}, we will 
examine two cases:
$$
\textrm{(i)} \quad 
\abs{\frac{\mxi'}{\abs{\mxi}}}
>\frac{1}{M^2}; 
\qquad 
\textrm{(ii)} \quad 
\abs{\frac{\mxi'}{\abs{\mxi}}}
\leq \frac{1}{M^2}.
$$ 
By \eqref{bemerkung}, in the first case (i), we have
\begin{equation*}
	\frac{\chi_M(t,\mx,\mlambda)}
	{\abs{\frac{\xi_0}{\abs{\mxi}}+\mff_{k}(\mlambda)
	\cdot \frac{\mxi'}{\abs{\mxi}}}^2
	+\delta \abs{\frac{\mxi'}{\abs{\mxi}}}^2}
	<\frac{M^4}{\delta}.
\end{equation*}
For the second case we apply the elementary inequality 
$b^2 \geq \frac{1}{2} (a+b)^2 - a^2$, with 
$a=-\mff_{k}(\mlambda)\cdot \mxi'/\abs{\mxi}$ and $b=\xi_0/\abs{\mxi}
+\mff_{k}(\mlambda)\cdot \mxi'/\abs{\mxi}$ 
such that $a+b=\xi_0/\abs{\mxi}$, which supplies the estimate
\begin{equation*}
	\frac{\chi_M(t,\mx,\mlambda)}
	{\abs{\frac{\xi_0}{\abs{\mxi}}+\mff_{k}(\mlambda)
	\cdot \frac{\mxi'}{\abs{\mxi}}}^2
	+\delta \abs{\frac{\mxi'}{\abs{\mxi}}}^2}
	\leq \frac{\chi_M(t,\mx,\mlambda)}
	{\frac{1}{2}\abs{\frac{\xi_0}{\abs{\mxi}}}^2
	-\abs{\mff_{k}(\mlambda)\cdot \frac{\mxi'}{\abs{\mxi}}}^2}.
\end{equation*}
Exploiting that 
$\frac{\xi_0^2}{\abs{\mxi}^2}
=1-\frac{\abs{\mxi'}^2}{\abs{\mxi}^2}
\overset{\textrm{(ii)}}\geq 1-\frac{1}{M^4}$ 
and, since \eqref{bemerkung} implies 
$\abs{f_k(\mlambda)}\leq M$,
$$
-\abs{\mff_{k}(\mlambda)\cdot \frac{\mxi'}{\abs{\mxi}}}^2
\geq -M^2\abs{ \frac{\mxi'}{\abs{\mxi}}}^2
\overset{\textrm{(ii)}}\geq -\frac{1}{M^2},
$$
we arrive at (recall $M>\sqrt{3}$)
\begin{equation*}
	\frac{\chi_M(t,\mx,\mlambda)}
	{\abs{\frac{\xi_0}{\abs{\mxi}}+\mff_{k}(\mlambda)
	\cdot \frac{\mxi'}{\abs{\mxi}}}^2
	+\delta \abs{\frac{\mxi'}{\abs{\mxi}}}^2}
	\leq \frac{1}{\frac12-\frac{1}{2M^4}-\frac{1}{M^2}}
	\leq \frac{2M^2}{M^2-3}.
\end{equation*} 
This ends the proof of \eqref{eq:chi-M-psi-delta-k-bounded}.  
We can use the exact same proof, but replace 
every instance of $\mff_k$ with $\mff$, to show that 
\eqref{eq:chi-M-psi-delta-k-bounded} is also true when 
we substitute $\mff_k$ with $\mff$.

To verify \eqref{eq:B-j-k-deriv}, note that the 
denominators in $B_k^{(j)}$ are limited in magnitude. 
As a result, if we take the $\mxi$-derivative of 
order $\abs{\malpha}\leq d+1$, the resulting expression takes the form 
$F_{\malpha}\left(t,\mx,\mlambda,\mxi/\abs{\mxi}\right)
/\abs{\mxi}^{\abs{\malpha}}$, 
where $F_{\malpha}$ is some bounded function 
(arguing as before), smooth in $\mxi$. 
We are interested in finding the $L^\infty$ norm in $\mxi$ 
on the unit sphere, which means 
\begin{align*}
	\norm{\pa_{\mxi}^{\malpha}B_k^{(j)}}_{L^\infty_{t,\mx,\mlambda,\mxi}}
	&=\norm{\frac{1}{\abs{\mxi}^{\abs{\malpha}}}F_{\malpha}
	\left(t,\mx,\mlambda,\frac{\mxi}{\abs{\mxi}}
	\right)}_{L^\infty_{t,\mx,\mlambda,\mxi}}
	\\ & 
	=\norm{F_{\malpha}
	\left(t,\mx,\mlambda,\mxi\right)}_{L^\infty_{t,\mx,\mlambda,\mxi}}
	\le C_{M,\delta,\alpha},
\end{align*}
for some constant $C_{M,\delta,\abs{\alpha}}$. 
This leads us to the assertion \eqref{eq:B-j-k-deriv}.

We will now investigate the action of the $H$-distribution $B$ 
on $\chi_M\ph\rho \psi  H_\delta^{(N)}$. For any $\ell>0$, let 
us consider the (truncation) functions
\begin{equation*}
	\begin{split}
		& T_\ell(z)=
		\begin{cases}
			z, & \text{if $\abs{z}<\ell$} \\
			0,& \text{otherwise}
		\end{cases},
		\qquad 
		T^\ell(z)=z-T_\ell(z)=
		\begin{cases}
			0, & \text{if $\abs{z}<\ell$} \\
			z,& \text{otherwise}
		\end{cases}.
	\end{split}
\end{equation*} 

By referring back to the defining equation for the $H$-distribution 
$B$, see \eqref{v2} or \eqref{rev1}, we can 
examine $\action{\chi_M \ph \rho \psi H_\delta^{(N)}}{B}$ 
using the following approach: 
\begin{align}
	\notag 
	& \action{\chi_M \ph \rho \psi H_\delta^{(N)}}{B}
	=\action{\,\chi_M(t,\mx,\mlambda) \sum_{k=1}^N 
	\varphi(t,\mx)\rho(\mlambda) \psi(\mxi)
	\psi_{\delta,k}(\mxi;\mlambda)\chi_k(t,\mx)}{B}
	\\ \notag
	& \quad
	=\lim_{n\to \infty}\int_{\R^{d+1+m}}
	\chi_M(t,\mx,\mlambda) \sum_{k=1}^N 
	\varphi(t,\mx)\rho(\mlambda)\tilde{\varphi}(t,\mx)\tilde{\rho}(\mlambda) 
	u_n(t,\mx,\mlambda)\chi_k(t,\mx)
	\\ \label{eq:HdeltaN-B}
	& \quad \qquad\qquad\qquad \qquad  \qquad \times 
	\overline{\cA_{\bar \psi\bigl(\frac{\mxi}{\abs{\mxi}}\bigr)
	\psi_{\delta,k}\bigl(\frac{\mxi}{\abs{\mxi}};\mlambda\bigr)}
	\left(v_n\right)(t,\mx)} 
	 \, d\mlambda\, d\mx \, dt
	\\ \notag 
	& \quad
	= \lim_{n\to \infty}
	\int_{\R^{d+1+m}}\chi_M(t,\mx,\mlambda)\sum_{k=1}^N 
	\varphi(t,\mx)\rho(\mlambda) \tilde{\varphi}(t,\mx)
	\tilde{\rho}(\mlambda) T_\ell(u_n(t,\mx,\mlambda))\chi_k(t,\mx)  
	\\ \notag 
	& \quad\qquad \qquad \qquad \qquad \qquad \times 
	\overline{\cA_{\bar \psi\bigl(\frac{\mxi}{\abs{\mxi}}\bigr)
	\psi_{\delta,k}\bigl(\frac{\mxi}{\abs{\mxi}};\mlambda\bigr)}
	\left(v_n\right)(t,\mx)} \,d\mlambda \,d\mx \,dt   
	\\ \notag 
	& \quad\quad\quad
	+\lim_{n\to \infty}\int_{\R^{d+1+m}} 
	\chi_M(t,\mx,\mlambda)\sum_{k=1}^N 
	\varphi(t,\mx)\rho(\mlambda) \tilde{\varphi}(t,\mx)\tilde{\rho}(\mlambda) 
	T^\ell(u_n(t,\mx,\mlambda))
	\\ \notag 
	& \quad\qquad \qquad \qquad \qquad \qquad \times 
	\chi_k(t,\mx)
	\overline{\cA_{\bar \psi\bigl(\frac{\mxi}{\abs{\mxi}}\bigr)
	\psi_{\delta,k}\bigl(\frac{\mxi}{\abs{\mxi}};\mlambda\bigr)}
	\left(\chi_k v_n\right)(t,\mx)}
	\, d\mlambda \, d\mx \, dt
	\\ \notag 
	& \quad  
	=: \lim_{n\to \infty} I_\ell(n,N,\delta,M)
	+\lim_{n\to \infty} I^\ell(n,N,\delta,M), 
	\quad \text{for any $l>0$}.
\end{align}
Here, we have utilised the fact that $\chi_k=\chi_k^2$ and the 
commutation lemma (Lemma \ref{lem:commutation}) to move one 
instance of $\chi_k$ inside $\cA$, arriving at 
the third equality. We note that, according to Lemma \ref{Lp-m}, the 
function $\psi_{\delta,k}$ qualifies as an $L^r$ Fourier 
multiplier for all $r \in (1, \infty)$, with a bound that 
holds uniformly with respect to $\mlambda$.

Observe that the $n$-limit of $I_\ell(n,N,\delta,M)$ 
(for fixed $\ell$) can be expressed using 
the ``standard" vector-valued $H$-measure $\mu_l$ 
(see Theorem \ref{thm:gen-H-measure}), 
which is generated by the sequences 
$\seq{\tilde{\rho} \tilde{\varphi}T_\ell(u_n)}$ 
and $\seq{v_n}$ of compactly supported $L^\infty$ functions: 
\begin{align*}
	\lim_{n\to \infty} I_\ell(n,N,\delta,M) 
	=\action{\chi_M \ph \rho \psi H_\delta^{(N)}}{\mu_\ell}.
\end{align*}
Moreover, the $H$-measure $\mu_\ell$ is a bounded bilinear 
functional that belongs to
$L^2_{w\star}\bigl(\R^{d+1+m};\cM(\S^d)\bigr)$.
Given the approximation result \eqref{eq:H-delta-approx}, 
it thus follows that
\begin{equation}\label{eq:HdeltaN-muell-approx}
	\begin{split}
		&\abs{\actionb{\chi_M \ph\rho\psi H_\delta}{\mu_\ell}
		-\action{\chi_M \ph\psi H_\delta^{(N)}}{\mu_\ell}}
		=\abs{\action{\chi_M \ph \rho  \psi 
		\left(H_\delta-H_\delta^{(N)}\right)}{\mu_\ell}}
		\\ & \qquad 
		\leq C_\ell \norm{\chi_M \ph \rho \psi 
		\bigl(H_\delta-H_\delta^{(N)}\bigr)}_{L^2(\R^{d+1+m};C^d(\S^d))}
		\\ & \qquad
		\leq C_{\ell,\ph,\rho,\psi,M} \norm{H_\delta
		-H_\delta^{(N)}}_{L^q(K\times L;C^{d}(\S^d))}\toN 0,
	\end{split}
\end{equation}
recalling that $2\leq q$, so that
\begin{align*}
	\lim_{N\to\infty} \lim_{n\to \infty} I_\ell(n,N,\delta,M) 
	=\actionb{\chi_M \ph \rho \psi H_\delta}{\mu_\ell}.
\end{align*}
By Lemma \ref{loc-p},
\begin{align*}
	\lim_{\delta\to 0}\actionb{\chi_M \ph \rho \psi H_\delta}{\mu_\ell}
	=\lim_{\delta\to 0}
	\action{\frac{\delta \chi_M(t,\mx,\mlambda) \varphi(t,\mx) 
	\rho(\mlambda) \psi(\mxi)\abs{\mxi'}^2}
	{\abs{\xi_0+\mff(t,\mx,\mlambda)\cdot \mxi'}^2
	+\delta \abs{\mxi'}^2}}{\mu_\ell}=0,
\end{align*} 
so that, for each fixed $\ell$,
\begin{equation}\label{eq:I-sub-ell-conv}
	\lim_{M\to \infty}\lim_{\delta\to 0}\lim_{N\to\infty} 
	\lim_{n\to \infty} I_\ell(n,N,\delta,M) =0.
\end{equation}

Let us now consider the second limit in \eqref{eq:HdeltaN-B}. 
First, recall that $\supp(\ph)\subset K\Subset \R^{d+1}$ 
and $\supp(\rho)\subset L\Subset \R^m$. 
Applying Lemma \ref{crucial} allows us to establish an upper bound 
for $I^\ell(n,N,\delta,M)$ in the following way: for $\bar{p}\in (1,p)$ 
and $\frac{1}{\bar{p}}+\frac{1}{\bar{p}'}=1$,
$$
\abs{I^\ell(n,N,\delta,M)} \leq C_{\ph, \rho} 
\norm{T^l(u_n)}_{L^{\bar{p}}(K\times L)} 
\norm{v_n}_{L^{\bar{p}'}(K\times L)},
$$
where the constant $C_{\ph, \rho} $ is independent of $n$, 
$N$, $\delta$, and $M$. By \eqref{eq:intro-ass1},
\begin{align*}
	\norm{T^l(u_n)}_{L^{\bar{p}}(K\times L)} ^{\bar{p}}
	& \leq  \int_{\abs{u_n}\ge \ell} 
	\abs{u_n}^{\bar{p}} \,d\mlambda \, d\mx \, dt 
	\leq \frac{C}{\ell^{p-\bar{p}}}
	\toell 0,
\end{align*}
and hence, uniformly in $n$, $N$, $\delta$, and $M$
\begin{equation}\label{eq:I-super-ell-conv}
	\lim_{\ell\to\infty} I^\ell(n,N,\delta,M) =0.
\end{equation}

After using \eqref{eq:H-delta-approx} and 
following the reasoning outlined in \eqref{eq:HdeltaN-muell-approx}, 
and bearing in mind that the $H$-distribution $B$ is a 
bounded bilinear functional that belongs to 
$L^r_{w\star}\bigl(\R^{d+1+m};(C^{d+1}(\S^d))'\bigr)$, 
where $q>r'=r/(r-1)$, it can be inferred that: 
\begin{equation}\label{eq:HdeltaN-B-approx}
	\abs{\actionb{\chi_M \ph\rho\psi H_\delta}{B}
	-\action{\chi_M \ph\psi H_\delta^{(N)}}{B}}
	\toN 0.
\end{equation}

Given \eqref{eq:I-sub-ell-conv}, \eqref{eq:I-super-ell-conv}, 
and \eqref{eq:HdeltaN-B-approx}, we can deduce 
from \eqref{eq:HdeltaN-B} that \eqref{v5} is valid. 
By combining \eqref{v4} with \eqref{v5}, we arrive 
at the conclusion that
$$
\actionb{\varphi\otimes \rho \otimes \psi}{B}=0, 
\quad  \varphi\otimes\rho\otimes \psi
\in C_c(\R^{d+1})\otimes C_c(\R^m)\otimes C^{d+1}(\S^{d}).
$$ 
As a result, upon selecting the symbol $\psi \equiv 1$ and $\rho$ 
such that $\rho \equiv 1$ on $\supp(\tilde{\rho})$, this 
implies the theorem's statement, particularly in reference to \eqref{res-1}.
When $\psi\equiv 1$, the multiplier operator $\cA_\psi$ 
becomes the identity operator and thus, 
recalling \eqref{eq:hn-weak-Lp}, \eqref{vn}, 
\eqref{eq:vn-weak-conv} and \eqref{rev1},
\begin{align*}
	0 &=\actionb{\varphi\otimes\rho\otimes 1}{B}
	\\ & = \lim_{n\to \infty}\int_{\R^{d+1+m}}
	\ph(t,\mx)\rho(\mlambda)\tilde{\ph}(t,\mx)
	\tilde{\rho}(\mlambda)u_n(t,\mx,\mlambda) 
	v_n(t,\mx)\, d\mlambda\, d\mx \, dt
	\\ & =\lim_{n\to \infty} 
	\int_{\R^{d+1}} \varphi(t,\mx)\tilde{\ph}(t,\mx)^2 
	\abs{\actionb{u_n}{\tilde{\rho}}(t,\mx)} \, d\mx \, dt, 
\end{align*}
which, in view of the arbitrariness of $\ph \tilde{\ph}^2$, 
implies \eqref{res-1}. 
\end{proof}

\section{Stochastic velocity averaging}\label{sec:stochastic}

\subsection{Preliminary considerations}
Recall that we consider a sequence of  kinetic SPDEs of 
the form \eqref{eq-s}. We should further clarify the assumptions regarding 
the stochastic components of \eqref{eq-s}. 
For background information on stochastic analysis and SPDEs, 
including the framework of It\^o integration with respect to 
cylindrical Wiener processes, we refer \cite{DaPrato:2014aa}. 
For properties of Bochner spaces like 
$L^p(\Omega;X)= L^p\bigl(\Omega,\cF,P;X\bigr)$, 
where $X$ is a Banach space, we refer to \cite{Hytonen:2016aa}. 

Consider a stochastic basis 
\begin{equation}\label{eq:stoch-basis}
	\cS_n=\bigl(\Omega, \cF,\seq{\cF_t^n}_{t\ge 0},P\bigr)
\end{equation}
with expectation operator denoted by $E$. 
This basis consists of a complete probability space 
$(\Omega,\cF,P)$ and a complete 
right-continuous filtration $\seq{\cF_t^n}_{t\ge 0}$. 
Let $\Bbb{H}$ be a separable Hilbert space with 
norm $\norm{\cdot}_{\Bbb{H}}$ and inner product $(\cdot,\cdot)_{\Bbb{H}}$.
Consider a second separable Hilbert space $\Bbb{K}$. 
Let $W_n(t)$ be a cylindrical Wiener process on $\cS_n$
evolving over $\Bbb{K}$, formally defined as
\begin{equation}\label{eq:Wiener-process}
	W_n(t) = \sum_{\ell=1}^{\infty} w_{n,\ell}(t)
	e_\ell, \quad t\ge 0,
\end{equation} where $\seq{w_{n,\ell}}_{\ell=1}^\infty$ are mutually 
independent standard $\R$-valued Wiener processes defined 
on $\cS_n$, and $\seq{e_{\ell}}_{\ell=1}^\infty$ is an orthonormal 
basis of $\Bbb{K}$. It is well known that the series defining $W_n$ 
does not converge in $\Bbb{K}$, but rather in any Hilbert space 
$\tilde{\Bbb{K}}\supset \Bbb{K}$ with a Hilbert-Schmidt embedding. 
We use $L_2(\Bbb{K},\Bbb{H})$ to denote the space of 
Hilbert-Schmidt operators mapping from $\Bbb{K}$ to $\Bbb{H}$.

We call $\Phi_n:\Omega \times [0, T]\to L_2(\Bbb{K},\Bbb{H})$ a
progressively measurable stochastic process if the restriction 
of $\Phi_n$ to $\Omega \times [0, t]$ is 
$\cF_t \times \cB([0,t])$-measurable for every $t \in [0, T]$. 

For a progressively measurable process 
$\Phi_n \in L^2(\Omega \times [0, T];L_2(\Bbb{K},\Bbb{H}))$, we 
define the \textit{It\^{o} stochastic integral} $I_n(\Phi_n)$ by
\begin{equation}\label{eq:stoch-int-n}
	I_n(\Phi_n)(t) = \int_0^t \Phi_n(s) \, dW_n(s), \quad t\in [0,T],
\end{equation}
where $I_n(\Phi_n)$ is a progressively 
measurable process in $L^2(\Omega \times [0, T];\Bbb{H})$.  
The It\^{o} integral  $I_n(\Phi_n)$ 
is the unique continuous $\Bbb{H}$-valued martingale such that 
for any $h \in \Bbb{H}$ and $t \in [0, T]$,
$$
\bigl( I_n(\Phi_n)(t),h\bigr)_{\Bbb{H}} 
=\sum_{\ell=1}^\infty \int_0^t \bigl(\Phi_n(s)e_\ell, h\bigr)_{\Bbb{H}}
\, dw_{n,\ell}(s),
$$
where each real-valued stochastic integral in the series 
is understood in the sense of It\^{o}. The convergence of the series 
takes place in $L^2(\Omega,\cF_t,P;C[0, t])$ for every $t\in [0,T]$. 
For a comprehensive explanation, see \cite{DaPrato:2014aa}. 

In the subsequent discussion, we will specifically assume that
\begin{equation}\label{eq:H-space}
	\Bbb{H}=L^2(\R^{d+m})
	=L^2(\R^d\times \R^m)=:L^2_{\mx,\mlambda}.
\end{equation}
To derive the strong compactness of 
velocity averages of the kinetic SPDEs \eqref{eq-s}, we first 
need to elucidate a suitable interpretation of \eqref{eq-s}. This should 
be done in a manner that necessitates minimal assumptions. 
For context, let us revisit the conventional interpretation that requires 
more stringent assumptions. 
We will then naturally transition to the interpretation 
tailored for the weaker assumptions. Therefore, following 
\cite{Langa:2003aa}, let us momentarily assume 
that the solutions $u_n$ belong to $L^2\bigl(\Omega,\cF_t,P;
L^\infty([0,t];L^2_{\mx,\mlambda})\bigr)$ for every $t\in [0,T]$ 
and that, for every spatial test function 
$v\in \cD_{\mx,\mlambda}:=\cD(\R^d\times \R^m)$, 
$t\mapsto \int_{\R^{d+m}} u_n(t) v \, d\mlambda\, d\mx$ 
is a.s.~continuous on $[0,T]$. Furthermore, assume that 
$\Phi_n\in L^2(\Omega \times [0,T];L_2(\Bbb{K},L^2_{\mx,\mlambda}))$
is a progressively measurable process, and that $u_n$ 
satisfies the SPDE \eqref{eq-s} in the distributional (weak) 
sense in $\mx, \mlambda$: for every $v\in \cD_{\mx,\mlambda}$ and 
for every $0\leq s<t\leq T$, the following equation holds a.s.: 
\begin{equation}\label{weak-sense-xl-tmp1}
	\begin{split}
		& \bigl( u_n(t),v\bigr)_{L^2_{\mx,\mlambda}}
		-\bigl( u_n(s),v\bigr)_{L^2_{\mx,\mlambda}}
		-\int_s^t \Bigl(\mff(t',\cdot_\mx,\cdot_\mlambda)u_n(t'), 
		\nabla_\mx v \Bigr)_{L^2_{\mx,\mlambda}}\, dt'
		\\ & \qquad
		= (-1)^{N_\mlambda} \int_s^t \innb{g_n(t'),
		\pa_{\mlambda}^{\malpha}v}_{W^{-1,\alpha}_{\mx,\mlambda},
		W^{1,\alpha'}_{\mx,{\mlambda}}} \,dt'
		\\ & \qquad\qquad 
		+\bigl(I_n(\Phi_n)(t),v\bigr)_{L^2_{\mx,\mlambda}}
		-\bigl(I_n(\Phi_n)(s),v\bigr)_{L^2_{\mx,\mlambda}},
	\end{split}	
\end{equation}
where $\inn{\cdot,\cdot}_{W^{-1,\alpha}_{\mx,\mlambda},
W^{1,\alpha'}_{\mx,{\mlambda}}}$---occasionally written 
simply as $\inn{\cdot,\cdot}$---denotes the 
duality pairing between $W^{-1,\alpha}(\R^{d+m})$ and 
$W^{1,\alpha'}(\R^{d+m})$, with $\alpha>1$ and 
$\frac{1}{\alpha}+\frac{1}{\alpha'}=1$. 
Here, to make sense of \eqref{weak-sense-xl-tmp1}, 
we also assume that $(\omega,t)\mapsto \inn{g_n(t),
\pa_{\mlambda}^{N_\mlambda}v}$ is integrable 
on $\Omega\times [0,T]$ (w.r.t.~$dP\times dt$). 

As the time derivative map $\pa_t$ is continuous from  
$C$ (and thus $L^\infty$) to $W^{-1,\infty}$ and $I_n(\Phi_n)$ belongs to 
$L^2\bigl(\Omega,\cF_t,P;C([0,t];L^2_{\mx,\mlambda})\bigr)$ 
for every $t\in [0,T]$,
\begin{equation}\label{eq:stoch-int-n-deriv}
	\pa_t I_n(\Phi_n)\in 
	L^2\bigl(\Omega,\cF_t,P;W^{-1,\infty}([0,t];L^2_{\mx,\mlambda})\bigr),
	\quad  t\in [0,T].
\end{equation}
At a later stage, we derive improved estimates on $\pa_t I_n(\Phi_n)$. 
Following the notation in \cite{Langa:2003aa}, $\pa_t I_n(\Phi_n)$ 
would be formally denoted as $\Phi_n \dot{W}_n$. However, we will 
maintain the use of $\pa_t I_n(\Phi_n)$. When we keep $\omega\in \Omega$ 
fixed and differentiate \eqref{weak-sense-xl-tmp1} in $t$, we 
obtain the following equation that holds 
in the distributional sense on $(0,T)$:
\begin{equation}\label{weak-sense-xl-tmp2}
	\begin{split}
		& \bigl( \pa_t u_n,v\bigr)_{L^2_{\mx,\mlambda}}
		-\Bigl(\mff(\cdot_t,\cdot_\mx,\cdot_\mlambda)
		u_n,\nabla_x v \Bigr)_{L^2_{\mx,\mlambda}}
		\\ & \qquad 
		= (-1)^{N_\mlambda} \innb{g_n(\cdot_t,\cdot_\mx,\cdot_\mlambda),
		\pa_{\mlambda}^{\malpha}v}_{W^{-1,\alpha}_{\mx,\mlambda},
		W^{1,\alpha'}_{\mx,{\mlambda}}}
		\\ & \qquad\qquad 
		+\bigl(\pa_t I_n(\Phi_n),v\bigr)_{L^2_{\mx,\mlambda}}
		\quad \text{in $\cDp(0,T)$},
	\end{split}	
\end{equation}
where $\pa_t u_n\in L^2\bigl(\Omega,\cF_t,P;
W^{-1,\infty}([0,t];L^2_{\mx,\mlambda})\bigr)$ 
for every time $t\in [0,T]$. 
With our assumptions, 
$\mff(\cdot_t,\cdot_\mx,\cdot_\mlambda) u_n$ 
and $\pa_{\mlambda}^{\malpha}g_n$ belong to 
$L^1\bigl(\Omega,\cF_t,P;
L^1([0,t];\cDp_{\mx,\mlambda})\bigr)$ and 
$L^1[0,t]\subset W^{-1,\infty}[0,t]$. Set 
$$
h_n :=\pa_t u_n+\Div_\mx 
\bigl (\, \mff(\cdot_t,\cdot_\mx,\cdot_\mlambda) 
u_n\, \bigr)-\pa_{\mlambda}^{\malpha} g_n
-\pa_t I_n(\Phi_n).
$$
Then, for all $t\in [0,T]$, 
$$
h_n\in L^1\bigl(\Omega,\cF_t,P;
W^{-1,\infty}([0,t];\cDp_{\mx,\mlambda})\bigr)
\subset L^1\bigl(\Omega,\cF_t,P;
\cDp([0,t];\cDp(\R^d\times \R^m))\bigr)
$$ 
and \eqref{weak-sense-xl-tmp2} implies that
\begin{equation*}
	\innb{h_n,v}_{\cDp_{\mx,\mlambda},\cD_{\mx,\mlambda}}=0
	\quad \text{in $\cDp(0,T)$}, 
	\quad \text{$v\in \cD_{\mx,\mlambda}$}.
\end{equation*}
In other words, a.s., 
$$
h_n=0 
\quad 
\text{in $\cDp(0,T;\cDp(\R^d\times\R^m))
=\cDp((0,T)\times \R^d\times \R^m)
=:\cDp_{t,\mx,\mlambda}$}.
$$
See \cite{Langa:2003aa} for further details.

\medskip

In summary, given the added 
assumptions in this section, the It\^{o} equations 
\eqref{weak-sense-xl-tmp1} imply that the equations below hold 
weakly on $(0,T)\times \R^{d+m}$:
\begin{equation}\label{weak-sense-txl-tmp1}
	\pa_t u_n+\Div_\mx 
	\bigl (\, \mff(t,\mx,\mlambda) u_n\, \bigr)
	=\pa_{\mlambda}^{\malpha} g_n+\pa_t I_n(\Phi_n)
	\quad \text{in $\cDp_{t,\mx,\mlambda}$, a.s.}
\end{equation}
In our approach, we will commence not from 
\eqref{weak-sense-xl-tmp1} but \eqref{weak-sense-txl-tmp1}. 
This will serve as the foundational point for formulating minimal 
assumptions and consequently obtaining a general compactness 
framework for velocity averages in the stochastic context. 
In applications, as evidenced in \cite{Karlsen:2022aa}, there is a 
tendency to consider approximate solutions 
where \eqref{weak-sense-xl-tmp1} is valid. 
The outline above shows how to shift from \eqref{weak-sense-xl-tmp1} 
to the $(t,\mx,\mlambda)$-weak formulation \eqref{weak-sense-txl-tmp1}, 
with the latter being instrumental in proving the 
velocity averaging result. Yet, before embarking on this, we need a 
stochastic version of the $H$-distribution concept.

\subsection{Stochastic $H$-distributions}

We need a stochastic adaptation of 
the concept of $H$-distributions. Although this adaptation is merely a 
subtle variation of the deterministic $H$-distributions, it is 
still unexplored territory. As a result, we will provide its statement and 
proof within this subsection rather than relegating it to the appendix.

In the theorem below, we examine a sequence $\seq{v_n}$ of functions 
defined on the product space $\Omega\times [0,T]\times \R^{d+m}$. 
In many applications, the most 
direct approach is to ascertain the weak convergence of $v_n$ 
across all variables $(\omega,t,\mx,\mlambda)$ on the product space, 
encompassing the probability variable $\omega$. Nevertheless, by 
leveraging an appropriate implementation of the Skorokhod--Jakubowski 
theorem \cite{Jakubowski:1997aa}, we 
may assume weak convergence with respect to the 
variables $(t,\mx,\mlambda)$, for $P$-a.e.~$\omega\in \Omega$, see 
the upcoming assumption \eqref{conv-ass} as well as
the assumptions of Theorem \ref{thm:stoch-velocity}. 
For a more in-depth discussion of this issue, see \cite{Karlsen:2022aa}.

\begin{theorem}\label{thm:H-distr-s}
Let $\seq{u_n}=\seq{u_n(\omega,t,\mx,\mlambda)}$ 
be a sequence of functions defined 
on $\Omega\times[0,T]\times \R^{d+m}$ 
satisfying $E\norm{u_n}_{L^p_{t,\mx,\mlambda}}
\lesssim 1$, for some $p>1$, and be uniformly compactly 
supported with respect to $(\mx,\mlambda)\in \R^d\times \R^m$ 
in a set $K\times L\Subset \R^{d+m}$.  

Let $\seq{v_n}=\seq{v_n(\omega,t,\mx)}$ 
be a sequence of functions defined 
on $\Omega\times[0,T]\times \R^{d}$ that is uniformly 
compactly supported with respect to 
$\mx$ in a set $K\Subset \R^d$, satisfying 
$\norm{v_n}_{L^\infty_{\omega,t,\mx}}\lesssim 1$ and
\begin{equation}\label{conv-ass}
	 v_n \weakn 0  
	 \quad \text{weakly-$\star$ in $L^\infty([0,T]\times\R^d)$, a.s.}
\end{equation}

For any $r' > 1$ that satisfies $\frac{1}{p} + \frac{1}{r'} < 1$, 
there exist subsequences of $\seq{u_n}$ and $\seq{v_n}$ (without 
relabelling) accompanied by a deterministic functional
$$
B=B(t,\mx,\mlambda,\mxi)
\in \bigl( L^{r'}([0,T]\times \R^{d+m})\times  
C^{d+1}(\S^d) \bigr)',
$$ 
where $\mxi=(\xi_0,\dots,\xi_d)
:=(\xi_0,\mxi')\in \bS^d$, such that 
\begin{align}\label{rev11}
	\begin{split}
		& \inn{\varphi \otimes \psi,B}
		\\ & \quad 
		=\lim\limits_{n\to\infty}
		E \Biggl[ \, \int_0^T\int_{\R^{d+m}} 
		\varphi(t,\mx, \mlambda)
		u_n(\cdot_\omega,t,\mx,\mlambda)
		\overline{ \cA_{\overline{\psi}}(v_n)(\cdot_\omega,t,\mx)}
		\, d\mlambda \, d\mx \, dt\Biggr],
	\end{split}	
\end{align}
for any $\varphi \in L^{r'}([0,T]\times \R^d\times\R^m)$ 
and $\psi\in C^{d+1}(\bS^d)$.

Occasionally, we refer to $B$ as the stochastic $H$-distribution 
associated with the subsequences $\seq{u_n}$ and $\seq{v_n}$.
\end{theorem}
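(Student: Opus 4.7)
The plan is to adapt the construction of the deterministic $H$-distribution (Theorem \ref{thm:H-distr}), treating $\omega$ as an auxiliary integration variable that is absorbed by the expectation, so that the resulting $B$ is automatically deterministic in $(t,\mx,\mlambda,\mxi)$. Concretely, for each $n\in \N$ I would define
$$
B_n(\varphi,\psi):=E\Biggl[\,\int_0^T\!\!\int_{\R^{d+m}}\varphi(t,\mx,\mlambda)\,u_n(\cdot_\omega,t,\mx,\mlambda)\,\overline{\cA_{\bar\psi}(v_n)(\cdot_\omega,t,\mx)}\,d\mlambda\,d\mx\,dt\,\Biggr],
$$
as a bilinear form on $L^{r'}([0,T]\times \R^{d+m})\times C^{d+1}(\bS^d)$, then establish a uniform-in-$n$ bound for $B_n$ in the corresponding dual, extract a subsequence converging on a countable dense set of test pairs, and extend the limit by continuity to define $B$ with the pairing \eqref{rev11}.

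The key ingredient is a three-term Hölder inequality. Since $\frac{1}{p}+\frac{1}{r'}<1$, choose $s\in (1,\infty)$ with $\frac{1}{p}+\frac{1}{r'}+\frac{1}{s}=1$. Using the $\mlambda$-compact support of $u_n$ in $L$ and the fact that $\cA_{\bar\psi}(v_n)$ depends only on $(t,\mx)$, I would apply Hölder in $(t,\mx,\mlambda)$ to obtain, $\omega$-pointwise,
$$
\left|\int_0^T\!\!\int_{\R^{d+m}}\!\varphi\, u_n\, \overline{\cA_{\bar\psi}(v_n)}\,d\mlambda\,d\mx\,dt\right|\leq \norm{\varphi}_{L^{r'}}\,\norm{u_n}_{L^p_{t,\mx,\mlambda}}\,\abs{L}^{1/s}\,\norm{\cA_{\bar\psi}(v_n)}_{L^s_{t,\mx}}.
$$
By Lemma \ref{Lp-m}, $\cA_{\bar\psi}$ is bounded on $L^s$ with operator norm $\lesssim \norm{\psi}_{C^{d+1}(\bS^d)}$; combined with $\norm{v_n}_{L^\infty_{\omega,t,\mx}}\lesssim 1$ and the uniform compact $\mx$-support of $v_n$ in $K$, this gives $\norm{\cA_{\bar\psi}(v_n)}_{L^s_{t,\mx}}\leq C_{T,K}\norm{\psi}_{C^{d+1}(\bS^d)}$ uniformly in $\omega$ and $n$. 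Taking expectation and invoking $E\norm{u_n}_{L^p_{t,\mx,\mlambda}}\lesssim 1$ yields $|B_n(\varphi,\psi)|\leq C\,\norm{\varphi}_{L^{r'}}\,\norm{\psi}_{C^{d+1}(\bS^d)}$ with $C$ independent of $n$.

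With the uniform bound in hand, I would exploit the separability of both $L^{r'}([0,T]\times \R^{d+m})$ and $C^{d+1}(\bS^d)$, pick countable dense subsets $\seqb{\varphi_k}$ and $\seqb{\psi_\ell}$, and apply a Cantor diagonal argument to extract a subsequence (not relabelled) along which $B_n(\varphi_k,\psi_\ell)$ converges for every $k,\ell$. The uniform bound then permits extending the limit to a continuous bilinear form $B$ on the full product space, and the identity \eqref{rev11} is inherited directly from the definition of $B_n$ together with Fubini. The main obstacle, and the only genuine departure from the deterministic argument, is that only the first moment $E\norm{u_n}_{L^p}$ is controlled: the $\omega$-pointwise Hölder estimate must therefore be uniform in $\omega$, which forces one to place $\cA_{\bar\psi}$ on $v_n$ (carrying the strong $L^\infty_{\omega,t,\mx}$ bound) rather than on $u_n$, and it is precisely the strict inequality $\frac{1}{p}+\frac{1}{r'}<1$ that produces the auxiliary exponent $s$ needed to absorb the multiplier norm uniformly in $\omega$.
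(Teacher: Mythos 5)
Your proposal is correct and follows essentially the same route as the paper: define the expectation-averaged bilinear forms $B_n$, establish the uniform bound via the three-exponent H\"older inequality ($\tfrac1p+\tfrac1{r'}+\tfrac1s=1$, made possible by the strict inequality $\tfrac1p+\tfrac1{r'}<1$) together with the $L^\infty_{\omega}$ control on $v_n$, and extract a weak-$\star$ convergent subsequence (your diagonal argument over countable dense sets is just Banach--Alaoglu for the separable predual, which is what the paper invokes). The only slip is the citation: the bound $\norm{\cA_{\bar\psi}}_{L^s\to L^s}\lesssim\norm{\psi}_{C^{d+1}(\bS^d)}$ comes from the Marcinkiewicz multiplier theorem (Theorem \ref{thm:multipliers}), not from Lemma \ref{Lp-m}, which concerns the specific symbols $\psi_{F,\delta}$.
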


\begin{remark}\label{rem-all}
From the theorem's proof, we may also assume that the test 
function (multiplier) $\psi$ in \eqref{rev11} can depend 
on $\mlambda \in \mathbb{R}^m$.
\end{remark}

\begin{proof}
To streamline our notation, we omit the explicit mention 
of integration domains. They can be inferred from the context 
and the corresponding differential. Additionally, we note 
that functions defined for $t \in [0,T]$ are extended to $\R$ by 
assigning them a value of zero for $t\notin [0,T]$. 

For $\psi\in C^{d+1}(\bS^d)$ and 
$\varphi \in L^{r'}([0,T]\times \R^{d}\times\R^m)$, 
consider the functionals 
\begin{equation}\label{step1}
	\begin{split}
		& \innb{\varphi \otimes \psi,B_n} 
		\\& \quad 
		=\int\varphi(t,\mx,\mlambda)
		u_n(\omega,t,\mx,\mlambda)
		\overline{\cA_{\bar \psi(\mxi/\abs{\mxi})}(v_n)(\omega,t,\mx)}
		\, d\mlambda \, d\mx \, dt \, dP(\omega),
	\end{split}
\end{equation} 
The functionals $B_n$, $n\in \N$, are clearly linear 
on $L^{r'}(\R^{d+1+m}) \otimes C^{d+1}(\bS^d)$. 
They are also uniformly bounded on the same space. 
Indeed, by H\"older's inequality and 
the Marcinkiewicz theorem (see 
Theorem \ref{thm:multipliers}),
\begin{equation*}
	\begin{split}
		& \abs{\innb{\varphi \otimes \psi,B_n}}
		\\ & \quad 
		\leq  \int \abs{\varphi(t,\mx,\mlambda)
		u_n(\omega,t,\mx,\mlambda)}
		\abs{\cA_{\bar \psi(\mxi/\abs{\mxi})}(v_n)(\omega,t,\mx)} 
		\, d\mlambda  \, d\mx \, dt \, dP(\omega)
		\\ & \quad 
		\lesssim_{L,d,q}
		\norm{\varphi}_{L^{r'}([0,T]\times \R^{d+m})}
		\norm{\psi}_{C^{d+1}(\bS^d)} 
		\underbrace{E\left[\norm{u_n}_{L^p([0,T]\times K\times L)} 
		\norm{v_n}_{L^q([0,T]\times K)}\right]}_{=:J_n},
	\end{split}
\end{equation*} whereas $q$ is a number 
satisfying $\frac{1}{r'}+\frac{1}{p}+\frac{1}{q}=1$. 
By our assumptions on $u_n$ and $v_n$, we have
$J_n\lesssim_{T,K} 1$. Thus, 
$$
\seq{B_n}\subset  
\bigl( L^{r'}([0,T]\times \R^{d+m})\times
C^{d+1}( \bS^d)\bigr)'
$$ 
is a sequence of linear functionals that is bounded
 independently of $n$. According to the 
 Banach-Alaoglu theorem, we 
conclude that there exists a subsequence 
of $\seq{B_n}$ (not relabelled) and a 
limit functional $B$ such 
that $B_n \overset{\star}{\dscon} B$ as $n\to \infty$, which---via 
\eqref{step1}---implies that $B$ 
satisfies \eqref{rev11}.
\end{proof} 

To make Theorem \ref{thm:H-distr-s} useful in applications, 
we need to show that the limit functional 
$B$ can be extended to a functional that belongs to 
the dual of the Bochner space 
$L^{r'}([0,T]\times \R^{d+m};C^{d+1}(\mathbb{S}^d))$.

\begin{corollary}\label{extension} 
The deterministic linear functional $B$---defined 
by \eqref{rev11}---can be continuously extended to the 
space $\bigl(L^{r'}([0,T]\times \R^{d+m};
C^{d+1}(\bS^d))\bigr)'$, i.e., 
$$
B\in L^{r}_{w\star}\bigr([0,T]\times \R^{d+m};
(C^{d+1}(\bS^d))'\bigr).
$$
\end{corollary}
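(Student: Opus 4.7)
My plan is to extend $B$---initially defined as a bilinear functional on the algebraic tensor product $L^{r'}([0,T]\times\R^{d+m}) \otimes C^{d+1}(\bS^d)$---to the Bochner space $L^{r'}([0,T]\times\R^{d+m}; C^{d+1}(\bS^d))$, mirroring the proof of Theorem \ref{thm:H-distr-extend} in the deterministic setting. The key observation is that the uniform estimate obtained in the proof of Theorem \ref{thm:H-distr-s}, namely
\begin{equation*}
    \bigl|\inn{\varphi\otimes\psi, B_n}\bigr|
    \lesssim_{T,K,L,d,p,q}
    \norm{\varphi}_{L^{r'}([0,T]\times\R^{d+m})}\,
    \norm{\psi}_{C^{d+1}(\bS^d)},
\end{equation*}
passes to the weak-$\star$ limit. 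This is equivalent to realizing $B$ as a bounded linear operator $T_B \colon L^{r'}([0,T]\times\R^{d+m}) \to (C^{d+1}(\bS^d))'$ via $(T_B\varphi)(\psi) := \inn{\varphi\otimes\psi, B}$, with $\norm{T_B}\le C$.

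Next, I invoke the classical representation theorem for operator-valued maps on $L^{r'}$-spaces (see, e.g., \cite[Chapter 2]{BanachI:2016}): since $C^{d+1}(\bS^d)$ is separable (a consequence of compactness of $\bS^d$) and the underlying measure space is $\sigma$-finite, every bounded operator into the dual of a separable Banach space admits a representation by a weakly-$\star$ measurable map $(t,\mx,\mlambda) \mapsto B(t,\mx,\mlambda,\cdot) \in (C^{d+1}(\bS^d))'$ whose operator norm lies in $L^r$. This identification places $B$ in $L^r_{w\star}([0,T]\times\R^{d+m}; (C^{d+1}(\bS^d))')$ and yields the natural pairing
\begin{equation*}
    \inn{\Phi, B} = \int_{[0,T]\times\R^{d+m}}
    \bigl\langle \Phi(t,\mx,\mlambda,\cdot),\, B(t,\mx,\mlambda,\cdot)\bigr\rangle_{C^{d+1},\,(C^{d+1})'}
    \, d(t,\mx,\mlambda),
\end{equation*}
which, by Hölder's inequality, is continuous on the Bochner space. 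Consistency with \eqref{rev11} on tensor products $\Phi = \varphi\otimes\psi$, combined with density of the algebraic tensor product in the Bochner space (via, e.g., \cite[Lemma 1.2.19]{BanachI:2016}, using separability of $C^{d+1}(\bS^d)$), guarantees that the extension is well-defined and unique.

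The main obstacle is the representation theorem itself: the dual space $(C^{d+1}(\bS^d))'$ does not enjoy the Radon--Nikodym property, so strong Bochner measurability of $(t,\mx,\mlambda)\mapsto B(t,\mx,\mlambda,\cdot)$ cannot be expected---only weak-$\star$ measurability is available, which is precisely why the conclusion is phrased in $L^r_{w\star}$ rather than $L^r$. Handling this point requires the weak-$\star$ measurable version of the representation theorem, a standard but delicate device in vector-valued integration that is also invoked (implicitly) in the appendix for the deterministic case; the only new ingredient here is the presence of the expectation $E$ in the definition of the $B_n$, but this merely integrates an additional bounded parameter before the representation is applied, leaving the structural argument unchanged.
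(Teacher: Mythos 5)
There is a genuine gap at the heart of your argument: the ``classical representation theorem'' you invoke in the second step is false in the form you need it. A bounded linear operator $T_B\colon L^{r'}([0,T]\times\R^{d+m})\to (C^{d+1}(\bS^d))'$ is the same thing as a bounded bilinear form on $L^{r'}\times C^{d+1}(\bS^d)$, i.e.\ a functional on the \emph{projective} tensor product $L^{r'}\hat\otimes_\pi C^{d+1}(\bS^d)$. This space embeds continuously and densely into the Bochner space $L^{r'}([0,T]\times\R^{d+m};C^{d+1}(\bS^d))$, but with a strictly \emph{larger} norm, so boundedness with respect to the projective norm does not imply boundedness with respect to the Bochner norm. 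Concretely: for a simple function $\varphi^N=\sum_{k=1}^N\chi^N_k\otimes\psi^N_k$ with disjoint $\chi^N_k$, the elementary-tensor estimate plus the triangle inequality only gives $\abs{\inn{\varphi^N,B}}\lesssim\sum_k\norm{\chi^N_k}_{L^{r'}}\norm{\psi^N_k}_{C^{d+1}}$, whereas membership in $L^r_{w\star}\bigl([0,T]\times\R^{d+m};(C^{d+1}(\bS^d))'\bigr)=\bigl(L^{r'}(\cdot;C^{d+1}(\bS^d))\bigr)'$ requires the much stronger bound by $\bigl(\sum_k\norm{\chi^N_k}_{L^{r'}}^{r'}\norm{\psi^N_k}_{C^{d+1}}^{r'}\bigr)^{1/r'}$. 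A counterexample to your representation claim: with $X=\ell^1$, $X'=\ell^\infty$, the operator $(T\varphi)_n=\int_0^1\varphi\, h_n$ with $h_n=n^{1/r}\En_{[1/(n+1),1/n]}$ is bounded from $L^{r'}(0,1)$ to $\ell^\infty$, yet $\sup_n\abs{h_n(\cdot)}\notin L^r(0,1)$, so $T$ is not represented by any element of $L^r_{w\star}((0,1);\ell^\infty)$. Separability of $C^{d+1}(\bS^d)$ and $\sigma$-finiteness do not rescue the argument; the obstruction has nothing to do with the Radon--Nikodym property.

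The missing ingredient is exactly the content of the paper's proof: one must return to the \emph{defining limit} \eqref{rev11} for $B$ on simple Bochner functions $\varphi^N=\sum_k\chi^N_k\otimes\psi^N_k$ and exploit the structure of the multiplier operators. Writing $\chi^N_k=(\chi^N_k)^2$ and using the commutation lemma (Lemma \ref{lem:commutation}) to move one factor $\chi^N_k$ inside $\cA_{\bar\psi^N_k}$, one can apply the \emph{discrete} H\"older inequality across the index $k$ (splitting exponents as $\frac{1}{r'}+\frac1p+\frac1q=1$) before applying the integral H\"older inequality and the Marcinkiewicz theorem. It is this rearrangement that produces the $N$-uniform bound $\abs{\inn{\varphi^N,B}}\lesssim\norm{\varphi^N}_{L^{r'}([0,T]\times K\times L;C^{d+1}(\bS^d))}$ and hence the extension to the Bochner space. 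Your remark that the expectation $E$ is harmless is correct, but without the commutation-plus-discrete-H\"older step the corollary is not proved.
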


\begin{proof}
Clearly, any function $\varphi 
\in L^{r'}([0,T]\times \R^{d+m};C^{d+1}( \bS^d))$ 
can be approximated by functions of the form 
$$
\varphi^N(t,\mx,\xi)=\sum\limits_{k=1}^N \psi^N_{k}(\mxi)
\chi^N_k(t,\mx,\mlambda), \quad N=1,2,\ldots,
$$ 
where $\psi^N_{k} \in C^{d+1}(\bS^d)$, and 
$\chi^N_k(t,\mx,\mlambda)$, $k=1,\ldots,N$, 
are characteristic functions of mutually disjoint sets. 
If we establish that the functional $B$ is $N$-uniformly 
bounded in the dual space 
$\bigl( L^{r'}([0,T]\times \R^{d+m};C^{d+1}(\bS^d))\bigr)'$ 
when acting on functions of the form $\varphi^N$, then the 
conclusion of the corollary is immediate.

To show this, fix $q\in (1,\infty)$ such that 
$\frac{1}{r'}+\frac{1}{p}+\frac{1}{q}=1$.
Noting that $\chi_k^N=(\chi_k^N)^2$ and 
using the commutation lemma (Lemma \ref{lem:commutation}) 
to ``move one $\chi_k^N$ inside $\cA_{\psi_k^N}(\cdot)$", we deduce
\begin{align*}
	& \abs{\innb{\varphi^N,B}} 
	= \Biggl | \lim_{n\to\infty}
	E \Biggl [ \, \int \sum_{k=1}^N \chi^N_k(t,\mx,\mlambda)
	u_n(\cdot_\omega,t,\mx,\mlambda)
	\\ &  \quad\qquad \qquad\qquad\qquad \qquad \qquad \times
	\overline{\cA_{\bar \psi_k^N(\frac{\mxi}{\abs{\mxi}})}
	\bigl( \chi^N_k v_n \bigr)(\cdot_\omega,t,\mx)}
	\, d\mlambda \, d\mx\, dt \Biggr ] \Biggr|
	\\ & \qquad \leq \limsup_{n\to\infty} 
	\int \left(\, \sum_{k=1}^N 
	\norm{\psi_k^N}^{r'}_{C^{d+1}(\bS^d)} 
	\chi^N_k(t,\mx,\mlambda)\right)^{\frac{1}{r'}}
	\\ &  \quad \qquad \qquad \times
	\left(\, \sum_{k=1}^N \chi^N_k(t,\mx,\mlambda) 
	\abs{u_n(\omega,\mx,\mlambda)}^p \right)^{\frac1p} 
	\\ & \quad \qquad\qquad
	\times \left( \, \sum_{k=1}^N
	\frac{1}{\norm{\psi_k^N}^{q}_{C^{d+1}(\bS^d)}} 
	\abs{\cA_{\bar \psi_k^N(\frac{\mxi}{\abs{\mxi}})}
	\bigl( \chi^N_k v_n \bigr)(\omega,t,\mx)}^q\right)^{\frac1q} 
	\!\! \, d\mlambda \, d\mx\, dt\, dP,
\end{align*}
where the final step employs the discrete H\"older inequality. 
To proceed, apply the integral H\"older 
inequality and the Marcinkiewicz multiplier 
theorem (Theorem \ref{thm:multipliers}) to obtain
\begin{align*}
	\abs{\innb{\varphi^N,B}} 
	& \lesssim_{d,q,L} \norm{\sum\limits_{j=1}^N
	\norm{\psi_k^N}_{C^{d+1}(\bS^d)}
	\chi^N_k}_{L^{r'}([0,T]\times K\times L)} 
	\\ & 
	\quad
	\times \limsup_{n\to\infty} 
	E\left[\norm{\sum\limits_{k=1}^N 
	\chi^N_k u_n}_{L^p([0,T]\times K\times L)} 
	\norm{\sum\limits_{k=1}^N 
	\chi^N_k v_n}_{L^q([0,T]\times K \times L)}\right]
	\\ & \lesssim_{d,q,T,K,L} 
	\norm{\sum_{k=1}^N  \psi_k^N
	\chi^N_k}_{L^{r'}([0,T]\times K\times L;C^{d+1}(\bS^d))}
	\\ & 
	=\norm{\varphi^N}_{L^{r'}([0,T]\times 
	K\times L;C^{d+1}(\bS^d))},
\end{align*} where have used the uniform $L^p$ and $L^\infty$ 
bounds on $u_n$ and $v_n$, respectively. 
The obtained bound $\abs{\innb{\varphi^N,B}}\lesssim_{d,q,L} 
\norm{\varphi^N}_{L^{r'}([0,T]\times\R^{d+m};C^{d+1}(\bS^d))}$ 
is $N$-independent.
\end{proof} 

We observe that the stochastic analogue of the generalized 
$H$-measures from Theorem \ref{thm:gen-H-measure} 
remains valid. It suffices to incorporate 
expectation operator on the right-hand side 
of the limit  \eqref{mu-repr-h}, and update the assumptions 
of the theorem to include the probability variable. 
More precisely, the following theorem holds (its 
proof is omitted for brevity):

\begin{theorem}\label{thm:gen-H-measure-s}
Assume that the conditions of Theorem \ref{thm:H-distr-s} 
hold with $p>2$ (instead of $p>1$). Let $r'>1$ 
be such that $\frac{1}{p}+\frac{1}{r'}<1$. Then 
there exists a subsequence (not relabelled) 
and a continuous functional 
$$\mu\in 
L^2_{w\star}\bigl(K_\mlambda;
\cM([0,T]\times K_\mx\times \S^{d})\bigr) 
\bigcap L^{r}_{w\star}
\bigl([0,T]\times K_\mx\times K_\mlambda;
(C^{d+1}(\S^{d})'\bigr)
$$ 
such that for every
$\varphi\in L^{r'}([0,T]\times K_\mx\times K_\mlambda)$ 
and $\psi\in{\rm C}^{d+1}(\S^{d})$,
\begin{equation*}
	\mu(\varphi \psi) = 
	\lim_{n\to\infty}E\left[\,\,
	\int\limits_{[0,T]\times K_\mx\times K_\mlambda}
	\varphi(t,\mx,\mlambda) u_n(t,\mx,\mlambda)
	\overline{\cA_{\bar{\psi}(\mxi/|\mxi|)}(v_n)(t,\mx)}
	\,d\mlambda\, d\mx\, dt \right].
\end{equation*}  
Furthermore, $\mu$ has the representation 
\begin{equation*}
	\mu(t,\mx,\mlambda,\mxi)=
	g(t,\mx,\mlambda,\mxi) \, d\mlambda 
	\, d\nu(t,\mx,\mxi), 
	\quad 
	g\in L_\nu^1([0,T]\times K_\mx\times \S^{D-1}; L^2(K_\mlambda)),
\end{equation*} 
where $\nu$ is a positive and bounded Radon measure on 
$[0,T]\times K_\mx\times\S^{D-1}$, while $L^1_\nu$ denotes that 
integration is performed with respect to the measure $\nu$; 
otherwise, the standard Lebesgue measure is assumed.
The $\S^d$--projection of the measure $\nu$ 
is absolutely continuous with respect to the 
Lebesgue measure $d\mx\, dt$.  
\end{theorem}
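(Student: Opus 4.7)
The plan is to follow the same pattern used to prove the deterministic generalized $H$-measure result (Theorem \ref{thm:gen-H-measure}), inserting the expectation on the right-hand side exactly as the authors suggest, and then overlaying the stronger $L^2$-structure that becomes available once $p>2$. Concretely, I would first define, for each $n\in\N$, the bilinear functional
$$
\mu_n(\varphi,\psi):=E\left[\int_{[0,T]\times K_\mx\times K_\mlambda}
\varphi(t,\mx,\mlambda)u_n(\cdot_\omega,t,\mx,\mlambda)
\overline{\cA_{\bar\psi(\mxi/|\mxi|)}(v_n)(\cdot_\omega,t,\mx)}
\,d\mlambda\,d\mx\,dt\right],
$$
for $\varphi\in L^{r'}([0,T]\times K_\mx\times K_\mlambda)$ and $\psi\in C^{d+1}(\S^d)$. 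Choosing $q\in(1,\infty)$ with $\frac{1}{r'}+\frac{1}{p}+\frac{1}{q}=1$ (which is possible since $\frac{1}{r'}+\frac{1}{p}<1$) and applying H\"older's inequality together with the Marcinkiewicz multiplier theorem (Theorem \ref{thm:multipliers}) gives
$$
|\mu_n(\varphi,\psi)|\lesssim_{d,L}\|\varphi\|_{L^{r'}}\|\psi\|_{C^{d+1}(\S^d)}
\,E\bigl[\|u_n\|_{L^p([0,T]\times K_\mx\times K_\mlambda)}
\|v_n\|_{L^q([0,T]\times K_\mx)}\bigr],
$$
and the expectation is uniformly bounded in $n$ by Cauchy--Schwarz together with the hypotheses $E\|u_n\|_{L^p}\lesssim 1$ and $\|v_n\|_{L^\infty}\lesssim 1$. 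Banach--Alaoglu then produces a subsequential weak-$\star$ limit $\mu$, and a partition-into-characteristic-functions argument identical to the one in Corollary \ref{extension} (using $\chi_k=\chi_k^2$ and the commutation lemma to pull one factor inside $\cA_{\psi_k}$) promotes $\mu$ to a continuous functional on the Bochner space $L^{r'}([0,T]\times K_\mx\times K_\mlambda;C^{d+1}(\S^d))$.

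To obtain the measure-valued regularity $\mu\in L^2_{w\star}(K_\mlambda;\cM([0,T]\times K_\mx\times\S^d))$, I would exploit $p>2$ to recast the limit as an $L^2$-based defect form: for test functions of the tensor type $\varphi(t,\mx,\mlambda)=\zeta(\mlambda)\eta(t,\mx)$, the H\"older estimate above can be sharpened by using $\frac{1}{p}+\frac{1}{2}+\frac{1}{q'}=1$ in the $(\omega,t,\mx)$ integration (with $q'$ adjusted so that Cauchy--Schwarz applies, exploiting the $L^\infty$ bound on $v_n$ to transfer the remaining integrability to the $L^p$ factor of $u_n$). Parametrically in $\mlambda$, this produces a classical stochastic $H$-measure $\nu(\mlambda;\cdot)\in\cM([0,T]\times K_\mx\times\S^d)$ generated by the pair $(u_n(\cdot,\mlambda),v_n)$, whose weak-$\star$ measurability in $\mlambda$ is obtained by testing against a countable dense collection of symbols. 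A Radon--Nikodym disintegration of $\mu$ with respect to the $\mlambda$-averaged measure $\nu$ then yields the representation $\mu=g\,d\mlambda\,d\nu$, and the $L^2(K_\mlambda;L^1_\nu)$ bound on $g$ comes directly from the sharpened Cauchy--Schwarz estimate.

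Finally, the absolute continuity of the $\S^d$-projection of $\nu$ with respect to $d\mx\,dt$ is a standard physical-space property of $H$-measures: it follows from the fact that $\cA_{\bar\psi(\mxi/|\mxi|)}$ is bounded on $L^q$ uniformly in the symbol and that the $L^\infty$ bound on $v_n$, combined with the compact $\mx$-support, precludes concentration of the defect measure on Lebesgue-null sets in $(t,\mx)$; the argument is identical to the deterministic version after averaging with $E[\cdot]$. The step I expect to be the main obstacle is the measure-valued disintegration $\mu=g\,d\mlambda\,d\nu$ together with the $L^2$-in-$\mlambda$ integrability of $g$, since it requires a careful joint measurability argument for the parametric family $\mlambda\mapsto\nu(\mlambda;\cdot)$ and a delicate balancing of H\"older exponents that genuinely uses $p>2$ rather than merely $p>1$; the rest is, as the authors indicate, a verbatim repetition of the deterministic proof with expectations inserted.
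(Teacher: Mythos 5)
Your proposal is correct and follows exactly the route the paper intends: the paper omits the proof of this theorem, saying only that it is Theorem \ref{thm:gen-H-measure} with the expectation inserted on the right-hand side and the assumptions updated, and your sketch reproduces precisely that by reusing the H\"older/Marcinkiewicz bound and Banach--Alaoglu step from the proof of Theorem \ref{thm:H-distr-s}, the characteristic-function partition and commutation-lemma extension from Corollary \ref{extension}, and the $p>2$ Cauchy--Schwarz-in-$\mlambda$ argument and disintegration $\mu=g\,d\mlambda\,d\nu$ from the deterministic generalized $H$-measure construction. The disintegration step you flag as delicate is also the one the paper itself only quotes from the literature (\cite{LazarMitrovic:12,LazarMitrovic:16,Erceg:2023aa}) in the deterministic case, so your level of detail there matches, and indeed exceeds, the paper's.
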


\subsection{Stochastic velocity averaging}
Using the micro-local defect tools that 
we just introduced, we aim to establish the 
localization principle for the $H$-distribution. 
This will be done through the stochastic 
transport equations governing $u_n$. 
From the localization principle we will eventually 
reach the conclusion that the velocity averages of $u_n$, 
represented by $\actionb{u_n}{\rho}$ with 
$\rho \in C^{N_\mlambda}_c(\R^m)$, exhibit 
compactness in $L^1_{\loc}(\omega,t,\mx)$. 
In contrast to the deterministic case, the stochastic 
integral in \eqref{eq-s} amplifies the significance of the 
temporal variable $t$.  While the deterministic localization 
principle is derived by testing the kinetic 
PDEs against meticulously selected test functions, which involve 
specific multiplier operators acting on the $(t,\mx)$-variables, 
the stochastic context demands caution. This caution is 
essential to sidestep complications tied to the stochastic 
integrals and the temporal components of the (random) test functions. 
For instance, it is crucial to ensure that adaptivity remains intact. 
Besides, one would like to avoid computing explicitly the 
temporal Fourier transform of stochastic integrals. 
In \cite{Karlsen:2022aa}, these challenges were 
adeptly addressed by employing multiplier operators that act solely 
on the $\mx$-variable, thus forging a distinct strategy to 
infer strong temporal compactness.

The aim of our current study is develop a new strategy to address the 
intricacies introduced by the stochastic integrals. This enables us to 
employ micro-local defect tools that operate in both the temporal 
and spatial variables. This leads to a convergence 
result that apply under conditions that are considerably 
more lenient than those presented in \cite{Karlsen:2022aa}. 
Central to this approach is the representation 
of the stochastic forcing term as a distributional time derivative. 
This hinges on the $(t,\mx,\mlambda)$-weak 
formulation \eqref{weak-sense-txl-tmp1} 
of the kinetic SPDEs \eqref{eq-s}, 
diverging from the conventional It\^{o} formulation 
\eqref{weak-sense-xl-tmp1}.

Let $\Bbb{H}$ be a separable Hilbert space with 
norm $\norm{\cdot}_{\Bbb{H}}$ and inner product 
$(\cdot,\cdot)_{\Bbb{H}}$; the relevant 
example is \eqref{eq:H-space}. 
Given $r>1$, $\nu \in (0, 1)$, let $W^{\nu,r}(0,T;\Bbb{H})$ be 
the fractional Sobolev space of 
all $u \in L^r(0,T;\Bbb{H})$ such that
$$
\int_0^T \int_0^T
\frac{\norm{u(t)-u(s)}_{\Bbb{H}}^r}{\abs{t-s}^{1+\nu r}}
\, dt \, ds< \infty.
$$
This space is endowed with the norm
$$
\norm{u}_{W^{\nu,r}(0,T;\Bbb{H})}^r
= \int_0^T \norm{u(t)}_{\Bbb{H}}^r \, dt 
+ \int_0^T \int_0^T
\frac{\norm{u(t) - u(s)}_{\Bbb{H}}^r}{\abs{t-s}^{1+\nu r}} 
\, dt \, ds.
$$
We note in passing that 
the space $W^{\nu,r}(0,T;\Bbb{H})$ is continuously 
embedded into the H\"older space $C^\gamma([0,T];\Bbb{H})$ 
for all $\gamma \in (0,\nu-\frac{1}{r})$, provided that $\nu r>1$.  
The fractional Sobolev space 
$W^{\nu,r}(\cO)$, where $\cO\subset \R^{D}$ is an open
bounded set with Lipschitz boundary, 
is defined similary, see, e.g., \cite[page 17]{Grisvard:1985aa}. 
We recall that $W^{\nu_1,r}(\cO)$ is compactly 
embedded into $W^{\nu_2,r}(\cO)$ provided 
$1\ge \nu_1>\nu_2\ge 0$, see, e.g., 
\cite[Theorem 1.4.3.2]{Grisvard:1985aa}.

Given that $I_n(\Phi_n)$ denotes the stochastic integral 
\eqref{eq:stoch-int-n}, and according to \eqref{eq:stoch-int-n-deriv}, 
the temporal derivative $\pa_t I_n(\Phi_n)$ resides in 
the space $L^2(\Omega;W^{-1,\infty}([0,T];L^2_{\mx,\mlambda})\bigr)$. 
However, this is insufficient to derive the localization principle for the 
$H$-distribution.  Fortunately, we have further information that 
$I_n(\Phi_n)$ is uniformly bounded in 
$L^2\bigl(\Omega;W^{1/4,2}([0,T];L^2_{\mx,\mlambda})\bigr)$. 
As a result, $\pa_t I_n(\Phi_n)$ also exhibits a uniform bound in 
$L^2(\Omega;W^{-3/4,2}([0,T];L^2_{\mx,\mlambda})\bigr)$. 
This constraint suffices to derive the localization principle, consistent 
with the methodology of the deterministic proof.

According to \eqref{weak-sense-txl-tmp1}, 
for $\varphi\in \cD_{t,\mx}$ and 
$\varrho \in \cD_{\mlambda}$, the following equation holds a.s.:
\begin{equation}\label{weak-sense-txl}
	\begin{split}
		& \qquad \int_0^T \int_{\R^{d+m}} 
		u_n \pa_t (\varphi \varrho) 
		+u_n \mff(t,\mx,\mlambda)\cdot \nabla_\mx 
		(\varphi \varrho)
		\, d\mlambda\, d\mx \,dt
		\\ & \qquad\,\,
		= (-1)^{N_\mlambda+1} 
		\innb{g_n,\pa_{\mlambda}^{\malpha}
		(\varphi\varrho)}_{W^{-1,\alpha}_{t,\mx,\mlambda},
		W^{1,\alpha'}_{t,\mx,\mlambda}}
		+ \int_0^T\int_{\R^{d+m}} 
		I_n(\Phi_n)\pa_t (\varphi\varrho) 
		\, d\mlambda\, d\mx \, dt,
	\end{split}	
\end{equation} 
where $\alpha>1$ is given by \eqref{eq:intro-ass3-stoch} 
($\frac{1}{\alpha}+\frac{1}{\alpha'}=1$)  
and $\abs{\malpha}=N_\mlambda\ge 0$. 
Consider a random test function of the form 
$$
\En_Z(\omega) \varphi(t,\mx)\varrho(\mlambda), 
\quad 
\text{$Z\subset \Omega$ measurable, 
$\varphi \in \cD_{t,\mx}$, and 
$\varrho \in \cD_\mlambda$}.
$$ 
Starting with \eqref{weak-sense-txl}, we multiply by 
$\En_Z(\omega)$ and integrate over $\omega\in \Omega$. 
Notably, functions of the form $\En_Z\varphi\varrho$ are locally dense in 
$L^\infty\bigl(\Omega; \cD([0,T]\times \R^d;\cD(\R^m))\bigr)$
and consequently in $L^\infty\bigl(\Omega; W^{1,\tilde{p}}_c([0,T]\times \R^d;
C^{N_\mlambda}_c(\R^m))\bigr)$, 
where $\tilde{p}:=\max\{p',\alpha'\}$, $p$ is 
given by \eqref{eq:intro-ass1-stoch} and 
$\frac{1}{p}+\frac{1}{p'}=1$. With this understanding, it is not difficult to use 
\eqref{weak-sense-txl} to verify that the following equation holds:
\begin{equation}\label{weak-sense-txl-omega}
	\begin{split}
		&\int_{\Omega}\int_0^T \int_{\R^{d+m}} 
		u_n \pa_t \phi+u_n 
		\mff(t,\mx,\mlambda)\cdot \nabla_\mx \phi 
		\, d\mlambda \, d\mx \, dt \, dP(\omega)
		\\ & \qquad
		= (-1)^{N_\mlambda} 
		\int_{\Omega} \inn{g_n,\pa_{\mlambda}^{\malpha}
		\phi}_{W^{-1,\alpha}_{t,\mx,\mlambda},
		W^{1,\alpha'}_{t,\mx,\mlambda}}\, dP(\omega)
		\\ & \qquad\qquad
		+\int_{\Omega}\int_0^T\int_{\R^{d+m}} 
		 I_n(\Phi_n)\, \pa_t \phi\, d\mlambda\, d\mx \, dt\, dP(\omega),
	\end{split}	
\end{equation} 
for any random test function ($\tilde{p}=p' \vee \alpha'$)
\begin{equation}\label{eq:random-test}
	\phi = \phi(\omega,t,\mx,\mlambda)
	\in L^\infty\bigl(\Omega;
	W^{1,\tilde{p}}_c([0,T]\times \R^d;
	C^{N_\mlambda}_c(\R^m))\bigr).
\end{equation}
Note that $\phi$ is regular in $t$, and thus 
$d \phi=\pa_t \phi\, dt$ a.s.

The following theorem is the main result of this section:

\begin{theorem}\label{thm:stoch-velocity}
For every $n \in \mathbb{N}$, suppose $u_n$ satisfies 
the kinetic SPDE \eqref{weak-sense-txl-tmp1} and 
\begin{equation}\label{eq:un-weak-conv}
	u_n\in_b  L^p_{t,\mx,\mlambda}, \quad 
	\text{$u_n \weakn 0$ in $L^p_{t,\mx,\mlambda}$, a.s.}, 
	\quad p>1.
\end{equation} 
The SPDE \eqref{weak-sense-txl-tmp1} is driven by 
a deterministic drift $\mff$, a random source $g_n$, 
and a stochastic forcing term 
\eqref{eq:stoch-int-n}. Further, the stochastic forcing term is 
characterized by a noise coefficient $\Phi_n$ alongside 
a cylindrical Wiener process \eqref{eq:Wiener-process}, 
evolving over the stochastic basis \eqref{eq:stoch-basis}.
Suppose the drift $\mff$ meets the $L^q$ requirement 
\eqref{eq:intro-ass2} and the non-degeneracy 
condition \eqref{non-deg}. The source $g_n$ 
satisfies \eqref{eq:intro-ass3-stoch}, 
while the noise function $\Phi_n$ obeys 
\eqref{eq:intro-assPhi-stoch}. 

Then, a subsequence of $\seq{u_n}$ (which we do not relabel) 
exists, such that for all averaging functions 
$\rho \in C^{N_\mlambda}_c(\R^m)$,
\begin{equation*}
	\int_{\R^m} u_n(\cdot_\omega,\cdot_t,\cdot_\mx,
	\mlambda)\rho(\mlambda) \, d\mlambda 
	\ton 0 \quad 
	\text{in $L^1(\Omega\times [0,T]\times K)$, 
	$\forall K\Subset \R^d$}. 
\end{equation*}
\end{theorem}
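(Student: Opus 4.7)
The plan is to carry out the deterministic argument of Theorem \ref{thm:main-result-determ} in an ``expectation-first'' form using the stochastic $H$-distribution from Theorem \ref{thm:H-distr-s}. Fix $\tilde\rho \in C^{N_\mlambda}_c(\R^m)$ and $\tilde\varphi\in C^1_c(\R^{d+1})$, set $\bar u_n := \actionb{u_n}{\tilde\rho}$, and define $V_n(\omega,t,\mx)$ as in \eqref{vn}. Since $\seq{V_n}$ is uniformly bounded in $L^\infty(\Omega\times[0,T]\times\R^d)$, extract a subsequence so that $v_n := V_n - V \to 0$ weakly-$\star$ in $L^\infty([0,T]\times\R^d)$ almost surely; this extraction combines Banach--Alaoglu with a Jakubowski-type a.s.\ representation in the spirit of \cite{Karlsen:2022aa}. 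Applying Theorem \ref{thm:H-distr-s} and Corollary \ref{extension} to the sequences $\seq{\tilde\rho\tilde\varphi u_n}$ and $\seq{v_n}$ produces the stochastic $H$-distribution $B\in L^r_{w\star}([0,T]\times\R^{d+m};(C^{d+1}(\S^d))')$ with $r'>p'$ chosen so that $\frac{1}{p}+\frac{1}{r'}<1$.

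To derive the localization principle, test the $(t,\mx,\mlambda)$-weak formulation \eqref{weak-sense-txl-omega} against the random test function
\begin{equation*}
\phi_n = \rho(\mlambda)\tilde\rho(\mlambda)\varphi(t,\mx)\tilde\varphi(t,\mx)\, \overline{\bigl(\cT_{-1}\circ\cA_{\bar\psi(\mxi/\abs{\mxi})}(v_n)\bigr)(t,\mx)},
\end{equation*}
with arbitrary $\rho\in C^{N_\mlambda}_c(\R^m)$, $\varphi\in C^1_c(\R^{d+1})$, and $\psi\in C^{d+1}(\S^d)$. The drift term produces $\actionb{\rho\varphi\psi(\xi_0+\mff\cdot\mxi')}{B}$ in the limit, and the source term vanishes exactly as in \eqref{v2}, exploiting \eqref{eq:intro-ass3-stoch} together with the $W^{1,\alpha'}$-boundedness of $\pa^\malpha_\mlambda\phi_n$. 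The new ingredient is controlling
\begin{equation*}
\int_\Omega \int_0^T\int_{\R^{d+m}} I_n(\Phi_n)\,\pa_t\phi_n\, d\mlambda\, d\mx\, dt\, dP.
\end{equation*}
Classical Kolmogorov/BDG estimates together with \eqref{eq:intro-assPhi-stoch} imply that $\seq{I_n(\Phi_n)}$ is bounded in $L^2(\Omega;W^{1/4,2}([0,T];L^2_{\mx,\mlambda}))$, hence $\seq{\pa_t I_n(\Phi_n)}$ is bounded in $L^2(\Omega;W^{-3/4,2}([0,T];L^2_{\mx,\mlambda}))$. Integrating by parts in $t$ (boundary terms vanish by compact support of $\varphi$ in $(0,T)$) and using the $W^{3/4,2}_t$--$W^{-3/4,2}_t$ duality, it suffices to prove that $\phi_n \to 0$ strongly in $L^2(\Omega;W^{3/4,2}([0,T];L^2_{\mx,\mlambda}))$. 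For each $\omega$, the Riesz potential $\cT_{-1}$ maps the uniformly compactly supported, $L^\infty$-bounded sequence $\seq{\cA_{\bar\psi}(v_n(\omega))}$ into a set bounded in $H^{1}_{t,\mx,\loc}$, which embeds into $W^{3/4,2}_t L^2_{\mx,\loc}$; Rellich compactness then upgrades the a.s.\ weak convergence to strong convergence in $W^{3/4,2}_t L^2_{\mx,\loc}$ for a.e.~$\omega$, and the uniform $L^\infty$ bound on $v_n$ permits dominated convergence in $\omega$, yielding the required Bochner strong convergence.

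With the stochastic term controlled, the localization identity $\actionb{(\xi_0+\mff\cdot\mxi')\Phi}{B}=0$ holds for all admissible $\Phi$, and the remainder of the proof follows the deterministic argument: approximate $\mff$ in $L^q$ by simple functions $\mff^{(N)}$, specialise $\Phi$ to $\chi_M\varphi\rho\psi(\xi_0+\mff\cdot\mxi')/(\abs{\xi_0+\mff\cdot\mxi'}^2+\delta\abs{\mxi'}^2)$, and use the stochastic generalized $H$-measure from Theorem \ref{thm:gen-H-measure-s} for the truncation $T_\ell(u_n)$ combined with a H\"older/Marcinkiewicz estimate for the tail $T^\ell(u_n)$; then let $N\to\infty$, $\delta\to 0$, $\ell\to\infty$, and $M\to\infty$, invoking \eqref{non-deg} to conclude $B\equiv 0$. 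Choosing $\psi\equiv 1$ and $\rho\equiv 1$ on $\supp(\tilde\rho)$ finally yields $E\int_0^T\int_{\R^d}\varphi\tilde\varphi^2\abs{\bar u_n}\,d\mx\,dt\to 0$, and arbitrariness of $\varphi\tilde\varphi^2$ delivers the conclusion. The principal obstacle, and the main departure from \cite{Karlsen:2022aa}, lies in the stochastic-term analysis above: by using multiplier operators that act on the full $(t,\mx)$-variable rather than on $\mx$ alone, we are forced to exploit the fractional-time regularity of $I_n(\Phi_n)$ together with the Bessel-potential gain supplied by $\cT_{-1}$ in a coordinated way, and to handle the dependence of $\phi_n$ on $\omega$ through an $\omega$-pointwise compactness argument combined with dominated convergence.
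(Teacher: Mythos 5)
Your proposal is correct and follows essentially the same route as the paper's proof: the same test function $\phi_n=\rho\tilde\rho\,\ph\tilde\ph\,\overline{\cT_{-1}\circ\cA_{\bar\psi}(v_n)}$ in the $(t,\mx,\mlambda)$-weak formulation, the same fractional-Sobolev bound $I_n(\Phi_n)\in_b L^2(\Omega;W^{\nu,2}_tL^2_{\mx,\mlambda})$ (the paper cites Flandoli--Gatarek, which is your BDG/Kolmogorov estimate) paired by duality with strong convergence of $\phi_n$ in $W^{s,2}$ for some $s>\tfrac12$ obtained via the uniform $W^{1,2}$ bound, Rellich compactness, and dominated convergence in $\omega$, and then the deterministic localization/truncation machinery to kill $B$. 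The only differences are cosmetic (you phrase the duality as $W^{3/4,2}$--$W^{-3/4,2}$ where the paper keeps $\nu<\tfrac12$ generic).
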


\begin{remark}
Following Remark \ref{rem:non-zero-weak-limit}, we can, without 
loss of generality, assume that $u_n$ weakly converges to zero 
as indicated by \eqref{eq:un-weak-conv}. 
Should $u_n$ weakly converge to some non-zero $u$---in line with 
our original assumption \eqref{eq:intro-ass1-stoch}---then we 
will consider the modified variable $\tilde{u}_n := u_n - u$. 
This variable satisfies the SPDE
$$
\pa_t \tilde u_n+\Div_\mx 
\bigl (\, \mff(t,\mx,\mlambda) \tilde u_n\, \bigr)
=\pa_{\mlambda}^{\malpha} g_n
+\pa_t u+\Div_\mx \bigl (\, \mff(t,\mx,\mlambda) u\, \bigr)
+\pa_t I_n(\Phi_n),
$$
which align closely in form with \eqref{weak-sense-txl-tmp1}.
\end{remark}

\begin{proof}
The proof is largely analogous to that of 
Theorem \ref{thm:main-result-determ}. Our primary focus 
is to underscore the key differences between the deterministic and 
stochastic scenarios. For further specifics not covered here, 
consult the proof of Theorem \ref{thm:main-result-determ}.

Define $v_n:=V_n-V$, where $V_n$ is defined as in \eqref{vn}, but 
with $u_n$ substituted by the specific 
$u_n = u_n(\omega,t,\mx,\mlambda)$ referenced in the present theorem. 
Moreover, we choose $V$ such that $v_n$ weakly converges to 0 in 
$L^{p_v}(\Omega \times [0,T] \times \mathbb{R}^d)$ for 
any finite $p_v$.  As previously highlighted (see 
also \cite{Karlsen:2022aa}), by effectively using the 
Skorokhod-Jakubowski representation theorem 
\cite{Jakubowski:1997aa}, we may assume the almost sure weak 
convergence of $v_n$ with respect to the variables 
$(t,\mx,\mlambda)$, so that a.s.~$v_n\weakstar 0$ 
in $L^\infty([0,T]\times \R^d)$ and thus
\begin{equation}\label{eq:vn-weak-conv-stoch}
	v_n\weakn 0 
	\quad \text{in $L^{p_v}([0,T]\times \R^d)$, a.s., 
	for any $p_v<\infty$}.
\end{equation}
The convergence \eqref{eq:vn-weak-conv-stoch} 
here substitutes for 
\eqref{eq:vn-weak-conv} in the deterministic proof.

We will use a test function $\phi_n$ 
in \eqref{weak-sense-txl-omega} roughly 
of the form $(-\Delta_{t,\mx})^{-\frac12}
\bigl(\cA_{\overline{\psi}}(v_n)\bigr)$, 
where $\Delta_{t,\mx}$ is the $(t,\mx)$-Laplacian, so that 
the $(t,\mx)$-gradient $\nabla_{t,\mx} \phi_n$ 
contains the Riesz transform 
$\cR_{t,\mx}=\nabla_{t,\mx}(-\Delta_{t,\mx})^{-\frac12}$ 
of $\cA_{\overline{\psi}}(v_n)$. More precisely,
\begin{equation}\label{eq:test-func-random}
	\begin{split}
		&\phi_n(\omega,t,\mx,\mlambda)
		=\rho(\mlambda)\tilde{\rho}(\mlambda) 
		\ph(t,\mx)\tilde{\ph}(t,\mx) 
		\overline{\left(\cT_{-1}\circ 
		\cA_{\overline{\psi}\left(\mxi/\abs{\mxi}\right)}
		(v_n(\omega,\cdot))\right)(t,\mx)},
		\\ & 
		\quad \text{where} \quad 
		\psi \in C^{d+1}(\S^{d}), \quad 
		\ph, \tilde{\ph} \in C^1_c([0,T]\times \R^{d}), \quad 
		\rho, \tilde{\rho} \in C^{N_\mlambda}_c(\R^m).
	\end{split}
\end{equation}
We note that $\tilde{\ph}$ and $\tilde{\rho}$ are not 
arbitrary test functions but are specifically the 
ones used to define $v_n$.

For any $L^2_{t,\mx,\mlambda}$-valued random variable 
$v$, the corresponding random variable $\cA_{\overline{\psi}}(v)
=\cF^{-1}_{t,\mx}\bigl(\bar \psi \cF_{t,\mx}(v)\bigr)$ takes the form
$$
\cA_{\overline{\psi}}(v)(\omega,t,\mx)
=\int e^{2\pi i \left((t-\tau)\xi_0
+(\mx-\my)\cdot \mxi' \right)}\overline{\psi(\xi_0,\mxi')}
v(\omega,\tau,\my)
\, d\tau \, d\my \, d\xi_0 \, d\mxi',
$$ 
recalling that $\mxi=(\xi_0,\xi_1,\ldots,\xi_d)
=(\xi_0,\mxi')\in \R^{d+1}$. Note that
$$
\pa_t\cA_{\bar \psi}(v)(\omega,t,\mx)
=2\pi i  \int e^{2\pi i \left((t-\tau)\xi_0
+(\mx-\my)\cdot \mxi \right)}
\xi_0 \overline{\psi(\xi_0,\mxi)}
v(\omega,\tau,\my)\, d\tau \, d\my \, d\xi_0 \, d\mxi'.
$$
Thus, the process $(\omega,t)\mapsto 
\cA_{\bar \psi}(v)(\omega,t,\cdot)$  has zero quadratic variation. 
Therefore $\phi_n$ may serve as a random test function in 
accordance with \eqref{eq:random-test}. Notably, $\phi_n$ has 
compact support across all its variables (independently of $n$), 
and is $C^{N_\mlambda}$ in the $\mlambda$ variable. 
The object $\Psi_n := \cA_{\overline{\psi}}(v_n)$ 
remains uniformly bounded in $L^r_{\loc}$ for any $r\in [1, \infty)$, 
and in $L^\infty$ in $\omega$. This uniform boundedness is a 
result of $\psi$ being an appropriate Fourier multiplier 
and $v_n\in_b L^\infty_{\omega,t,\mx}$. 
Additionally, the Riesz potential $\cT_{-1}(\Psi_n)$ is uniformly 
bounded in $L^{\tilde r}_{\loc}$ for all $\tilde r\in [1, r^\star]$, 
where $r^\star > r$ is the Sobolev conjugate of $r$, 
see Theorem \ref{thm:riesz}. Lastly, the gradient 
$\nabla_{t,\mx}\cT_{-1}(\Psi_n) = -\cR_{t,\mx}(\Psi_n)$ 
also stays uniformly bounded in $L^r_{\loc}$ for any $r\in (1, \infty)$. 
This is assured by the fact that the Riesz transform 
$\cR_{t,\mx}$ is bounded on these spaces.

Therefore, we can utilize \eqref{eq:test-func-random} 
as a test function in \eqref{weak-sense-txl-omega}. 
In the resulting equation, we let $n\to \infty$ 
along the subsequence of $\seq{u_n}$ and $\seq{v_n}$ 
that define the $H$-distribution $B$, 
according to Theorem \ref{thm:H-distr-s}.
  
The substantial distinction from the deterministic 
proof \eqref{v2} is the limit 
of the term $\cI_n$ as $n\to \infty$, where
$$
\cI_n:=E\left[\int_0^T\int_{\R^{d+m}} 
\pa_t \phi_n I_n(\Phi_n)\, d\mlambda\, d\mx \, dt\right].
$$
We assert that $\cI_n\ton 0$. Let $r \ge 2$, $\nu < \frac{1}{2}$ be given. 
Suppose $\Phi_n$ is a progressively measurable process that 
resides in $L^r\bigl(\Omega \times [0, T];
L_2(\Bbb{K},L^2_{\mx,\mlambda})\bigr)$, where 
$L^2_{\mx,\mlambda}$ is short-hand for \eqref{eq:H-space}. 
Relying on \cite[Lemma 2.1]{Flandoli:1995aa} 
(with $H = L^2_{\mx,\mlambda}$), it follows that
$$
I_n(\Phi_n) \in L^r\bigl(\Omega; 
W^{\nu,r}([0,T];L^2_{\mx,\mlambda})\bigr)
$$
and there exists a constant $C(r,\nu) > 0$, 
independent of $n$, such that
$$
E\left[ \norm{I_n(\Phi_n)}_{W^{\nu,r}(0,T;L^2_{\mx,\mlambda})}^r
\right] 
\leq C(r,\nu) E\left[
\int_0^T \norm{\Phi_n(t)}_{L_2(\Bbb{K},L^2_{\mx,\mlambda})}^r \,dt
\right].
$$
In what follows, we specify $r=2$, in which case the right-hand 
side is uniformly bounded with 
respect to $n$, as stipulated by 
assumption \eqref{eq:intro-assPhi-stoch}. 
We can then estimate $\cI_n$ as follows:
\begin{align*}
	\cI_n^2 & \leq E\left[ 
	\norm{\pa_t \phi_n}_{W^{-\nu,2}([0,T];L^2_{\mx,\mlambda})}^{2} 
	\right]E\left[
	\norm{I_n(\Phi_n)}_{W^{\nu,2}([0,T];L^2_{\mx,\mlambda})}^2 
	\right]
	\\ & \lesssim 
	E\left[ \norm{\phi_n}_{W^{{1-\nu},2}([0,T]\times \R^{d+m})}^2\right]
	\qquad \text{(with $1-\nu>1/2$)}.
\end{align*}
Clearly, $\phi_n$ converges weakly to zero 
in $L^2$. Moreover, as discussed above, $\phi_n$ 
is uniformly bounded in $W^{1,2}
\Subset W^{\kappa,2}$, $\kappa\in [0,1)$. 
Therefore, it follows that $\phi_n \ton 0$ a.s.~in 
$W^{\kappa,2}([0,T]\times \R^{d+m})$ 
for any $\kappa<1$. Moreover, a.s., 
\begin{equation*}
	\norm{\phi_n}_{W^{{1-\nu},2}([0,T]\times \R^{d+m})}
	\leq \norm{\phi_n}_{W^{1,2}([0,T]\times \R^{d+m})}
	\lesssim_{T,K} \norm{v_n}_{L^\infty_{\omega,t,\mx}},
\end{equation*} 
where $K:=\supp(\phi_n)$. Thus, by 
Lebesgue's dominated convergence theorem, 
\begin{equation*}
	E\left[ \norm{\phi_n}_{W^{{1-\nu},2}([0,T]\times \R^{d+m})}^2 
	\right]\ton 0.
\end{equation*}  
This leads us to the desired result $\cI_n\ton 0$.

Lastly, we address the term associated with 
$g_n$. As argued near  \eqref{eq:ass-g-zero}, we can  
without loss of generality assume that the limit of $g_n$, 
see \eqref{eq:intro-ass3-stoch}, is zero: $g=0$. 
Similar to the deterministic case, we observe that the 
properties of $\partial_{\mlambda}^{\alpha}\phi_n$ mirror those of $\phi_n$.
Specifically, we then find that
\begin{equation*}
	\begin{split}
		& E\bigl[\left\langle g_n, 
		\pa_{\mlambda}^{\malpha}\phi_n \right\rangle\bigr] 
		\\ & \quad 
		\leq E\bigl[\norm{g_n}_{W^{-1,\alpha}([0,T]\times \R^{d+m})} 
		\norm{\pa_{\mlambda}^{\malpha}
		\phi_n}_{W^{1,\alpha'}([0,T]\times \R^{d+m})}\bigr] 
		\\ & \quad 
		\leq \norm{\pa_{\mlambda}^{\malpha}
		\phi_n}_{L^\infty(\Omega;W^{1,\alpha'}([0,T]\times \R^{d+m}))} 
		E\left[\norm{g_n}_{W^{-1,\alpha}([0,T]\times \R^{d+m})}
		\right]\ton 0,
	\end{split} 
\end{equation*} 
where we have used \eqref{eq:intro-ass3-stoch}. 

From here onwards, the argument closely follows the 
deterministic reasoning, leading us directly to the localization 
principle given by \eqref{v3}. The rest of the argument remains 
the same as before. With this, we conclude the proof.
\end{proof}

\appendix 

\section{$H$-measures and $H$-distributions}
\label{sec:appendix-H-distribution}

This appendix complements the core content of the paper by 
recalling relevant results from harmonic analysis and $H$-distributions. 
Engaging with this material is pivotal for a 
thorough grasp of the results presented.

For an introduction to harmonic analysis, we refer 
the reader to the sources \cite{Grafakos:2014aa,Stein:1970pr}. 
We will specifically recall the 
concept of a Fourier multiplier operator 
and the Marcinkiewicz multiplier theorem.  
A Fourier multiplier operator $\cA_\psi:L^2(\R^D)\to L^2(\R^D)$ 
associated to a function $\psi\in L^\infty(\R^D)$, $D\ge 1$, 
is a mapping given by $\cA_\psi(u)=\cF^{-1}
\bigl(\psi \cF(u)\bigr)$, where $\cF(u)(\mxi)=\hat{u}(\mxi)
=\int_{\R^d}e^{-2\pi i \mx\cdot
\mxi}\, u(\mx)\, d\mx$ is the Fourier transform of $u$, 
while $\cF^{-1}$ is the inverse Fourier transform. 

The following (Marcinkiewicz) multiplier theorem is from 
\cite[Corollary 6.2.5]{Grafakos:2014aa}.

\begin{theorem}\label{thm:multipliers}
Consider a function $\psi$ that is bounded on $\R^D$, for $D\ge 1$, 
and belongs to $C^D\bigl(\R^D\backslash
\bigcup_{j=1}^D \{\xi_j=0\}\bigr)$, 
for which there exists a constant $A>0$ such that for all 
multi-indices ${\malpha}=(\alpha_1,\dots,\alpha_D) 
\in \N_0^D$ with $\abs{{\malpha}}
=\alpha_1+\dots+\alpha_D\leq D$, the 
following inequality holds:
\begin{equation}\label{eq:multipliers}
	\abs{\mxi^{{\malpha}}\partial^{{\malpha}}\psi(\mxi)}
	\leq A, \quad \mxi\in \R^D \setminus
	\bigcup_{j=1}^D\seq{\xi_j=0}.
\end{equation}
Then $\psi$ is a multiplier in the $L^p$ 
space for all $p\in(1,\infty)$, and 
the operator norm of $\mathcal{A}_\psi$ is equal 
to $C_{D,p}\bigl( \norm{\psi}_{L^\infty}+A\bigr)$, where 
$C_{D,p}$ depends only on $D$ and $p$.  For $p=2$, 
the operator norm is $\norm{\psi}_{L^\infty}$. 
Furthermore, for any $\psi\in C^D(\bS^{D-1})$, the 
function $\psi\left(\frac{\mxi}{\abs{\mxi}}\right)$ satisfies 
the conditions of the theorem with $A=\|\psi\|_{C^D(\bS^{D-1})}$.
\end{theorem}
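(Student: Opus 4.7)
The plan is to follow the classical Littlewood--Paley route, treating the $L^2$ and general $L^p$ statements separately, and then reducing the spherical-symbol corollary to the basic estimate by a chain-rule computation.

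For the $p=2$ assertion I would argue directly via Plancherel: since $\cA_\psi u = \cF^{-1}(\psi \cF u)$, we have $\|\cA_\psi u\|_{L^2}=\|\psi \hat u\|_{L^2}\le \|\psi\|_{L^\infty}\|u\|_{L^2}$, which gives operator norm exactly $\|\psi\|_{L^\infty}$. This serves as the base estimate; the role of the derivative hypothesis \eqref{eq:multipliers} is not to improve the $L^2$ norm but to propagate boundedness to $L^p$ for $p\ne 2$.

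For general $p\in(1,\infty)$, I would decompose frequency space into the family of dyadic rectangles $R_{\mathbf{k}}=\prod_{j=1}^D I_{k_j,\eps_j}$, where $I_{k,+}=[2^k,2^{k+1})$, $I_{k,-}=(-2^{k+1},-2^k]$, indexed by $\mathbf{k}\in\Z^D$ and sign vectors $\boldsymbol\eps\in\{\pm\}^D$. The Littlewood--Paley square function $S(f)^2=\sum_{\mathbf{k},\boldsymbol\eps}|\Delta_{\mathbf{k},\boldsymbol\eps}f|^2$, where $\Delta_{\mathbf{k},\boldsymbol\eps}$ is the Fourier projection onto $R_{\mathbf{k}}$, satisfies $\|f\|_{L^p}\sim_{D,p}\|S(f)\|_{L^p}$ (the tensor-product Littlewood--Paley inequality, proven by iterating the one-dimensional Littlewood--Paley theorem coordinate by coordinate). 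Since $\Delta_{\mathbf{k},\boldsymbol\eps}$ and $\cA_\psi$ commute as Fourier multipliers, one has $\Delta_{\mathbf{k},\boldsymbol\eps}\cA_\psi f = \cA_{\psi\cdot\mathbf{1}_{R_{\mathbf{k}}}}f = \cA_{\psi\cdot\mathbf{1}_{R_{\mathbf{k}}}}\Delta_{\mathbf{k},\boldsymbol\eps}f$. The key analytic step is to show, using the hypothesis \eqref{eq:multipliers}, that
\begin{equation*}
\bigl|\cA_{\psi\cdot\mathbf{1}_{R_{\mathbf{k}}}}g(\mx)\bigr|
\lesssim_{D}\bigl(\|\psi\|_{L^\infty}+A\bigr)\,M_{\mathrm s}(|\Delta_{\mathbf{k},\boldsymbol\eps}g|)(\mx),
\end{equation*}
where $M_{\mathrm s}$ is the strong (product) maximal function. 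This is obtained by writing $\psi\cdot\mathbf{1}_{R_{\mathbf{k}}}$ as an iterated fundamental-theorem-of-calculus integral of its pure partial derivatives $\partial_{\xi_{j_1}}\cdots\partial_{\xi_{j_s}}\psi$ over the box $R_{\mathbf{k}}$, converting the resulting convolution with a product of one-dimensional Dirichlet kernels into a maximal-function bound, and using $|\mxi^{\malpha}\partial^{\malpha}\psi|\le A$ to control the size of each derivative on the dyadic box (where $|\xi_j|\sim 2^{k_j}$). Combining this pointwise bound with Khintchine's inequality and the Fefferman--Stein vector-valued maximal inequality on $L^p(\ell^2)$ (here the strong maximal operator is bounded on $L^p(\ell^2)$ for $p>1$ by iterated Hardy--Littlewood) yields
\begin{equation*}
\|\cA_\psi f\|_{L^p}\sim\|S(\cA_\psi f)\|_{L^p}\lesssim_{D,p}(\|\psi\|_{L^\infty}+A)\,\|S(f)\|_{L^p}\sim \|f\|_{L^p}.
\end{equation*}

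For the final assertion, given $\psi\in C^D(\bS^{D-1})$ I would expand $\partial^{\malpha}[\psi(\mxi/|\mxi|)]$ by the chain/Leibniz rule. Each derivative of $\mxi\mapsto\mxi/|\mxi|$ is homogeneous of degree $-1$, so $|\partial^{\malpha}[\psi(\mxi/|\mxi|)]|\lesssim_{D,|\malpha|}\|\psi\|_{C^{|\malpha|}(\bS^{D-1})}\,|\mxi|^{-|\malpha|}$. Multiplying by $|\mxi^{\malpha}|\le|\mxi|^{|\malpha|}$ produces the uniform bound $A=\|\psi\|_{C^D(\bS^{D-1})}$ (up to a dimensional constant absorbed into $C_{D,p}$), verifying the hypothesis \eqref{eq:multipliers} for the homogeneous extension. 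I expect the technically heaviest step to be the pointwise estimate of $\cA_{\psi\cdot\mathbf{1}_{R_{\mathbf{k}}}}g$ by a strong maximal function, since the product-rectangle geometry forces one to iterate one-dimensional Abel-summation/Dirichlet-kernel bounds $D$ times and to keep careful track of which pure partial derivatives appear---this is the essential combinatorial heart of the Marcinkiewicz theorem, as opposed to the smoother Mikhlin--H\"ormander condition, and is the reason a full $D$-th order product-type hypothesis is needed.
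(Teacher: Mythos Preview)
The paper does not supply its own proof of this statement: it is quoted verbatim as the Marcinkiewicz multiplier theorem with a citation to \cite[Corollary 5.2.5]{Grafakos:2008}, and no argument is given in the text. Your outline---Plancherel for $p=2$, a tensor-product Littlewood--Paley decomposition into dyadic rectangles, a pointwise control of each dyadic piece by the strong maximal function via iterated fundamental-theorem-of-calculus on the box, and then Fefferman--Stein on $L^p(\ell^2)$---is precisely the classical route followed in Grafakos, so your proposal is consistent with the source the paper invokes. The final chain-rule verification for $\psi(\mxi/|\mxi|)$ is also the standard computation (homogeneity of degree $-|\malpha|$ of the derivatives of the radial projection), and your remark that the dimensional constant gets absorbed into $C_{D,p}$ matches how the paper uses the result.
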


Background material on Riesz potentials 
$\cT_{-s}=(-\Delta)^{-\frac{s}{2}}$ and 
Riesz transforms $\cR=\nabla (-\Delta)^{-\frac{1}{2}}$ 
can be found in \cite{Stein:1970pr}. Here we recall 
the following result: 

\begin{theorem}\label{thm:riesz}
The Riesz potential, 
denoted $\cT_{-s}$ for $s\in (0,D)$, is a Fourier 
multiplier operator with the symbol $1/\abs{\mxi}^s$, 
where $\mxi \in \R^D$. It maps continuously from $L^p(\R^D)$, 
$p\in (1,D/s)$, to $L^{p^\star}(\R^D)$ 
for $p^\star=\frac{Dp}{D-sp}$. 
Moreover, the Fourier multiplier operator with the 
symbol $\xi_j^s/\abs{\mxi}^s$, $j=1,\dots,D$, maps 
continuously from $L^r(\R^D)$ 
to $L^{r}(\R^D)$ for any $r\in (1,\infty)$. 
\end{theorem}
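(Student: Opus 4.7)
The plan is to treat the two assertions separately. The first (the $L^p\to L^{p^\star}$ mapping of $\cT_{-s}$) is the classical Hardy--Littlewood--Sobolev inequality, and the second ($L^r\to L^r$ boundedness of the symbol $\xi_j^s/\abs{\mxi}^s$) reduces to a direct verification of the hypotheses of Theorem \ref{thm:multipliers}. First, I would identify $\cT_{-s}$ as a convolution operator. Since $\abs{\mxi}^{-s}$ is a tempered, rotation-invariant, homogeneous distribution of degree $-s\in(-D,0)$, its inverse Fourier transform equals $\gamma_{D,s}\abs{\mx}^{s-D}$ for an explicit positive constant $\gamma_{D,s}$ (obtained by pairing against Gaussians and using a scaling argument). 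Hence, for $f$ Schwartz,
\begin{equation*}
\cT_{-s}f(\mx)=\gamma_{D,s}\int_{\R^D}\frac{f(\my)}{\abs{\mx-\my}^{D-s}}\,d\my,
\end{equation*}
and by density the estimate proved on Schwartz functions will extend to all of $L^p$.

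To prove the convolution inequality I would use the standard truncation/maximal-function argument. For each fixed $R>0$, write the Riesz kernel as the sum of its restrictions to $\seq{\abs{\my-\mx}\le R}$ and $\seq{\abs{\my-\mx}>R}$. A dyadic decomposition of the inner region, together with the fact that averages of $\abs{f}$ over balls are dominated by the Hardy--Littlewood maximal function $Mf$, yields the near-part bound $C\,R^s\,Mf(\mx)$. H\"older's inequality on the outer region yields the far-part bound $C\,R^{s-D/p}\norm{f}_{L^p}$, whose finiteness uses precisely the assumption $p<D/s$. Balancing the two terms (choosing $R$ so that $R^s Mf(\mx)\sim R^{s-D/p}\norm{f}_{L^p}$) produces the pointwise inequality
\begin{equation*}
\abs{\cT_{-s}f(\mx)}\lesssim\bigl(Mf(\mx)\bigr)^{p/p^\star}\norm{f}_{L^p}^{1-p/p^\star},
\end{equation*}
and raising to the power $p^\star$, integrating in $\mx$, and invoking the strong $L^p$-boundedness of $M$ (which is where $p>1$ enters) delivers the first assertion.

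For the second assertion, set $m_j(\mxi):=\xi_j^s/\abs{\mxi}^s=(\xi_j/\abs{\mxi})^s$, which is homogeneous of degree zero. On $\R^D\setminus\bigcup_{k=1}^D\seq{\xi_k=0}$ the function $m_j$ is smooth, and an induction on $\abs{\malpha}$ shows that each differentiation either lowers the exponent of $\xi_j$ by one (producing a factor proportional to $\xi_j^{s-1}/\abs{\mxi}^s$) or pulls down a factor $\xi_k/\abs{\mxi}^2$; in either case one obtains $\abs{\pa^{\malpha}m_j(\mxi)}\le C_{D,s,\abs{\malpha}}\abs{\mxi}^{-\abs{\malpha}}$ for every multi-index with $\abs{\malpha}\le D$. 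Multiplying by $\mxi^{\malpha}$ gives the bound $\abs{\mxi^{\malpha}\pa^{\malpha}m_j(\mxi)}\le A_{D,s}$ required by \eqref{eq:multipliers}, and Theorem \ref{thm:multipliers} then yields $L^r$-boundedness for every $r\in(1,\infty)$.

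The main obstacle sits inside the first part: although the truncation scheme itself is routine, care is required to justify the convolution representation for general $L^p$ inputs (rather than just Schwartz), to establish the pointwise bound on a set of full measure, and to see that the endpoint condition $p<D/s$ appears naturally from the integrability of the far piece. The second assertion is structurally simpler---once the pointwise derivative estimates for $m_j$ are written out, Theorem \ref{thm:multipliers} does all of the remaining work.
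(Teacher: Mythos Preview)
The paper does not supply its own proof of this theorem: it is stated in the appendix as a classical result, with the preceding sentence pointing to \cite{Stein:1970pr} for background on Riesz potentials and transforms. So there is nothing in the paper to compare your argument against; your outline is precisely the standard route one finds in Stein---convolution representation of $\cT_{-s}$, the near/far splitting controlled by the maximal function and H\"older, optimization over the cut radius to get the interpolation inequality, and then the maximal theorem to finish the Hardy--Littlewood--Sobolev bound. For the second assertion your reduction to Theorem~\ref{thm:multipliers} via the homogeneity-of-degree-zero derivative estimate is the right move.

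One small caveat: your smoothness claim for $m_j(\mxi)=\xi_j^s/\abs{\mxi}^s$ on $\R^D\setminus\bigcup_k\seq{\xi_k=0}$ tacitly assumes $s$ is a positive integer (otherwise $\xi_j^s$ is not even real for $\xi_j<0$ without choosing a branch, and it fails to be $C^D$ across $\xi_j=0$). The paper only ever uses this part with $s=1$ (the Riesz transforms $\cR_{t,\mx}$ appearing in the test-function construction), so the statement is really aimed at that case; it would be cleaner to say so explicitly rather than to assert smoothness for arbitrary $s\in(0,D)$.
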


As per the statement made in 
\cite[Theorem 2.2]{LazarMitrovic:16} (see 
also \cite[Theorem 16 and Corollary 18]{Erceg:2023aa}), 
there is an $H$-measure $\mu$ representing the 
distributional limit of products 
such as $u_n \cA_{\overline{\psi}}(v_n)$.  
We present the theorem in a format tailored 
for our specific application, in which case 
the functions in question are defined 
on either $\R^{d+1+m}$ or $\R^{d+1}$, with 
independent variables $t\in \R$, $\mx\in \R^d$, 
and $\mlambda\in \R^m$, so that below 
``$D=d+1$ and $\mx \to (t,\mx)$". Recall that 
for time-dependent functions defined on $t \geq 0$, 
we always extend them by zero for $t < 0$.  
Throughout this paper, we often say that a 
sequence $\seq{u_n}$ of functions 
is \textit{uniformly compactly supported}. 
This means that there exists a compact set 
$K$ such that each function $u_n$ has 
its support contained within $K$.

The next theorem combines results from 
\cite[Theorem 2.2]{LazarMitrovic:16}, 
\cite[Theorem 9]{Misur:2015aa}, and \cite{Gerard:91}. 
We first recall some notions from \cite{Gerard:91}. 
Let $H$ be a separable Hilbert space. We denote by 
$\mathcal L(H)$ the space of bounded linear 
operators on $H$, by $\mathcal L^1(H)$ the ideal 
of trace-class operators on $H$, and by 
${\cal K}(H)$ the space of compact operators on $H$. 
We recall that the dual ${\cal K}(H)'$ can be identified with 
${\cal L}^1(H)$ through the trace pairing. 
If $S\in\mathcal L^1(H)$, then its trace is denoted by 
$\tr(S)$. We shall use only the following standard 
properties: if $S\in\mathcal L^1(H)$ and 
$\tilde S\in\mathcal L(H)$, then 
$\tilde S S\in\mathcal L^1(H)$, 
the quantity $\tr(\tilde S S)$ is well defined, and
$$
|\tr(\tilde S S)|\leq 
\|\tilde S\|_{\mathcal L(H)}
\|S\|_{\mathcal L^1(H)} .
$$
For positive 
$S\in\mathcal L^1(H)$, one has
$$
\tr(S)=\sum_{k=1}^\infty
\langle Se_k,e_k\rangle_H
$$
for any orthonormal basis $(e_k)$ of $H$, 
and this value is independent 
of the choice of basis.

\begin{theorem}\label{thm:gen-H-measure}
Let $\seq{u_n}$ be bounded in $L^p(\R^{D+m})$, 
for some $p> 2$, and uniformly compactly supported in  
$$
K_\mx \times K_\mlambda \Subset \R^D\times \R^m \,.
$$ 
Let $\seq{v_n}$ be another sequence that is uniformly compactly 
supported in $K_\mx$ and weakly-$\star$ converges to 
zero in the space $L^\infty(\R^D)$. Let $r'>1$ be such 
that 
$$
\frac{1}{p}+\frac{1}{r'}< 1 \,.
$$

Then there exists a subsequence (not relabelled) 
and a continuous functional 
\begin{equation}\label{eq:mu-Bochner}
	\mu\in L^2_{w\star}
	\bigl(K_\mlambda ;\cM(K_\mx\times \S^{D-1})\bigr) 
	\cap L^{r}_{w\star}
	\bigl(K_\mx\times K_\mlambda; (C^D(\S^{D-1}))'\bigr)
\end{equation}
such that for every
$\varphi\in L^{r'}(K_\mx\times K_\mlambda)$ 
and $\psi\in{\rm C}^D(\S^{D-1})$, one has
\begin{equation}\label{mu-repr-h}
	\mu(\varphi \psi)
	=\lim_{n\to\infty}\int_{K_\mx\times K_\mlambda}
	\varphi(\mx,\mlambda) u_n(\mx,\mlambda)
	\overline{\cA_{\bar{\psi}(\mxi/|\mxi|)}(v_n)(\mx)}
	\,d\mlambda\, d\mx.
\end{equation}  We will refer to 
$\mu$ as the generalised $H$-measure (generated by 
the sequences $\seq{u_n}$ and $\seq{v_n}$). 
The generalised $H$-measure $\mu$ 
has the representation
\begin{equation}\label{eq:mu-repr-RN}
	\mu(\mx,\mxi,\mlambda)=
	g(\mx,\mxi,\mlambda)
	\, d\lambda \, \nu(\mx,\mxi), 
	\quad 
	g\in L_\nu^1(K_\mx\times\S^{D-1};L^2(K_\mlambda)),
\end{equation} 
where $\nu\in \cM(K_\mx\times\S^{D-1})$ 
is a positive bounded Radon measure on 
$K_\mx\times\S^{D-1}$, while $L^1_\nu$ denotes that 
integration is performed 
with respect to the measure $\nu$. 
Moreover, the $\S^{D-1}$--projection 
of the measure $\nu$ is absolutely 
continuous with respect to the 
Lebesgue measure $d\mx$ and, for 
a.e.~$\mx\in K_\mx$, there exists
a Radon probability measure $\nu_\mx$ 
and $h\in L^{\frac p2}(K_\mx)$
such that 
\begin{equation}\label{eq:nu-slicing}
	d\nu (\mx,\mxi) 
	=d\nu_\mx(\mxi)h(\mx)
	\, d\mx .
\end{equation}	
\end{theorem}

\begin{remark}\label{rem:weak-Bochner-spaces}
Recall that $\mu\in L^2_{w\star}(K_\mlambda;
\cM(K_\mx\times \S^{D-1}))$
means that $\mu$ is  a weak-$\star$ measurable function 
$\mu:K_\mlambda \to \cM(K_\mx\times \S^{D-1})$, that is, 
$\mlambda \mapsto \actionb{\mu(\mlambda)}{\phi}$
is measurable from $K_\mlambda$ into $\R$ 
for every $\phi \in C_0(K_\mx\times \S^{D-1})$, such that
$$
\int_{K_\mlambda} 
\norm{\mu(\mlambda)}^2_{\mathcal{M}(K_\mx\times\S^{D-1})} 
\, d\mlambda < \infty,
$$
see e.g.~\cite[p.~606]{Edwards:1965aa}.
Here $\cM(K_\mx\times\S^{D-1})$ is the 
dual of $C_0(K_\mx\times\S^{D-1})$
and corresponds to the space of 
bounded signed Radon measures on 
$K_\mx\times\S^{D-1}$.

On the other hand, 
$L^2_{w\star}(K_\mlambda ;\cM(K_\mx\times \S^{D-1}))$
is isomorphic to the dual of the Bochner space 
$L^2(K_\mlambda; C_0(K_\mx\times \S^{D-1}))$ 
(since $C_0(K_\mx\times\S^{D-1})$ lacks the 
Radon-Nikodym property).
Moreover, it is continuously embedded into 
the space of bounded linear operators
$\mathcal{L}(C_0(K_\mx\times\S^{D-1}); L^2(K_\mlambda))$
(it corresponds to the closed subspace 
of majorized linear operators; 
cf.~\cite[Theorem 2.1]{LazarMitrovic:13}).

Analogously, the space 
$L^{r'}_{w\star}\bigl(K_\mx\times K_\mlambda;
(C^D(\S^{D-1}))'\bigr)$ is isomorphic to the dual of 
$L^{r}(K_\mx\times K_\mlambda; C^D(\S^{D-1}))$, while its
elements are weak-$\star$ measurable functions 
from $K_\mlambda$ to $(C^D(\S^{D-1}))'$.
\end{remark}

\begin{proof}[Proof of Theorem \ref{thm:gen-H-measure}]
The existence of the generalised 
$H$-measure $\mu$ from the 
class \eqref{eq:mu-Bochner} along with 
\eqref{mu-repr-h} is precisely 
\cite[Theorem 2.2]{LazarMitrovic:16} 
and \cite[Theorem 9]{Misur:2015aa}.
We only prove the representation 
\eqref{eq:mu-repr-RN}--\eqref{eq:nu-slicing}.
A version of the representation formula 
for $\mu$ appears in 
\cite{Lazar:2012aa,LazarMitrovic:16}, but 
some steps of the derivation are not
spelled out there. We therefore 
provide a complete proof of
\eqref{eq:mu-repr-RN}--\eqref{eq:nu-slicing}.

Set
$$
{\bf K}:=K_{\mx}\times \S^{D-1},
\qquad
H:=L^2(K_{\mlambda};\C).
$$
Note that $H$, as well as all 
other function spaces in this paper, 
are complex vector spaces. For brevity, we generally omit 
the explicit reference to $\C$ in our notation 
(e.g., writing $L^2(K_\mlambda)$ instead of 
$L^2(K_\mlambda; \C)$).

\medskip
\paragraph{{\it Step 1 (G\'erard's microlocal defect measure)}}
Set
$$
U_n(\mx):=u_n(\mx,\cdot)\in H,
\qquad
V_n(\mx):=v_n(\mx)\En_{K_\mlambda}(\cdot)\in H,
$$
so that
\begin{equation}\label{eq:Wn-def}
	W_n(\mx):=\bigl(U_n(\mx),V_n(\mx)\bigr)^\top
	\in H\times H.
\end{equation}
Since $p>2$ and the sequence $\seq{u_n}$ 
is uniformly supported in
$K_{\mx}\times K_{\mlambda}$, the sequence 
$(U_n)$ is bounded in $L^2(K_\mx;H)\cap L^p(K_\mx; H)$. 
Moreover, since $\seq{v_n}$ is uniformly supported in
$K_{\mx}$ and converges weakly-$\star$ 
to $0$ in $L^\infty(\R^D)$, the sequence $\seq{V_n}$ 
converges weakly to $0$ in $L^q(K_\mx;H)$
for any $q\in [1,\infty)$
(recall that $K_\mlambda$ is bounded).
Passing to a subsequence, we may assume 
that $U_n\rightharpoonup U$ weakly
in $L^2(K_\mx;H)$. Since 
$V_n\rightharpoonup0$ in $L^2(K_\mx;H)$, replacing
$U_n$ by $U_n-U$ does not affect the mixed 
limit in \eqref{mu-repr-h}, and we may therefore
assume that
$$
W_n\rightharpoonup0
\qquad\text{weakly in }L^2(K_\mx;H\times H).
$$

By G\'erard's construction \cite[Theorem 1]{Gerard:91}, 
after extraction of a subsequence (still denoted
by $\seq{W_n}_{n\in\N}$),
there exists a $2\times 2$ 
block matrix of operator-valued measures
$$
M\in \cM_+\bigl({\bf K};
\mathcal L^1(H\times H)\bigr), \quad
M=
\begin{pmatrix}
	M_{1,1} & M_{1,2}\\
	M_{1,2}^* & M_{2,2}
\end{pmatrix},
$$
such that for every zero-order 
pseudodifferential operator
$A\in \Psi^0(\R^D;{\cal K}(H\times H))$, 
with principal symbol
$a(\mx,\mxi)$, one has
\begin{equation}\label{eq:gerard-def}
	\lim_{n\to\infty}
	\bigl(AW_n,W_n\bigr)_{L^2_{\mx}(H\times H)}
	= \int_{{\bf K}}
	\tr\bigl(a(\mx,\mxi)\,d M(\mx,\mxi)\bigr).
\end{equation}

Let
$$
\nu:=\tr M\in \cM_+({\bf K}).
$$
Then, by \cite[Proposition A.1]{Gerard:91},
$$
M=F\,\nu
$$
for some $\nu$-measurable $2\times2$ 
block field of trace-class operators
\begin{align*}
	& F\in L^1_\nu\bigl({\bf K};
	\mathcal L^1(H\times H)\bigr),
	\\ &
	F(\mzeta)\ge 0 
	\quad \hbox{and} \quad \tr F(\mzeta)=1
	\quad\text{for }\nu\text{-a.e. }
	\mzeta:=(\mx,\mxi)\in{\bf K}.
\end{align*}

\begin{remark}
Since the sequence $\seq{W_n}$ is bounded in $L^2$,
rather than merely in $L^2_{\loc}$, the
vector-valued measure $M$ and the scalar measure
$\nu$ are both bounded. Consequently,
$F \in L^1$, rather than only $L^1_{\loc}$.
This is the main difference from the results
in \cite{Gerard:91}.
\end{remark}

\medskip

\paragraph{{\it Step 2 (Slicing of $\nu$)}}
Let $\pi_\mx : K_\mx\times\S^{D-1} \to K_\mx$
denotes the canonical projection 
$\pi_\mx(\mx,\mxi)=\mx$.
Then we want to show that the pushforward measure
$\tilde\nu := (\pi_\mx)_\# \nu$
is absolutely continuous with respect 
to the Lebesgue measure $d\mx$.

Let $(e_k)_{k\ge1}$ be an orthonormal basis of $H$, 
and let $\mathsf P_N$ denote the orthogonal 
projection of $H$ onto
$\mathrm{span}\{e_1,\dots,e_N\}$.
Since each $\mathsf P_N$ is a finite-rank operator, 
it follows that $\mathsf{P}_N\in \mathcal{K}(H)$
(i.e., $\mathsf{P}_N$ is compact).
Moreover, $\mathsf{P}_N$ converges to the identity operator
in the strong operator topology.
Consequently, for any trace-class operator
$S\in\mathcal{L}^1(H)$, the sequence 
$\tr(\mathsf{P}_N S)$ converges to $\tr(S)$
as $N$ tends to infinity.
Since we work on $H\times H$, we introduce 
$$
\mathsf R_N:=
\begin{pmatrix}
	\mathsf P_N & 0\\
	0 & \mathsf P_N
\end{pmatrix},
$$
for which we still have that for any 
$S\in \mathcal{L}^1(H\times H)$,
the sequence $\tr(\mathsf{R}_N S)$ 
converges to $\tr(S)$.

Let us take an arbitrary 
$\varphi\in C^\infty_c(K_\mx)$. 
Then we have
\begin{align*}
	\int_{K_\mx} \varphi(\mx) \, d\tilde\nu(\mx)
	&= \int_{K_\mx}\int_{\S^{D-1}}
	\varphi(\mx) \,d\nu(\mx,\mxi) \\
	&= \int_{\mathbf{K}} \varphi(\mx) 
	\tr(F(\mzeta)) \, d\nu(\mzeta)
	= \lim_{N\to\infty} \int_{\mathbf{K}}
	\varphi(\mx) \tr(\mathsf{R}_N F(\mzeta)) 
	\, d\nu(\mzeta),
\end{align*}
where we have used that $\tr(F(\mzeta))=1$ 
for $\nu$-a.e.~$\mzeta\in\mathbf{K}$
and the previous discussion on $\mathsf{R}_N$. 

Since the operator with the symbol 
$\varphi(\mx)\mathsf{R}_N$
belongs to $\Psi^0(\R^D;{\cal K}(H\times H))$, 
the identity \eqref{eq:gerard-def} is applicable. 
Hence, we get
\begin{align}\label{eq:slicing}
	\int_{K_\mx} \varphi(\mx) \, d\tilde\nu(\mx)
	= \lim_{N\to\infty} \lim_{n\to\infty} \int_{K_\mx} \varphi(\mx) 
	\bigl(\mathsf{R}_N W_n(\mx),W_n(\mx)\bigr)_{H\times H} \,d\mx.
\end{align}
However, the expression on the 
right-hand side is a bounded linear functional 
on $L^q(K_\mx)$, where $q=(\frac{p}{2})'\in (1,\infty)$. 
Here we have used that $(W_n)$ is bounded in $L^p(K_\mx;H\times H)$. 
Since $(L^q(K_\mx))'$ is isomorphic to $L^{p/2}(K_\mx)$, there exists 
$h\in L^{p/2}(K_\mx)$ such that $d\tilde\nu(\mx) = h(\mx)d\mx$.

By applying the result on slicing measures 
\cite[Theorem 1.5.1]{Evans:1990wt} we finally get that
$$
d\nu(\mx,\mxi) = d\nu_\mx(\mxi) h(\mx) d\mx,
$$
where, for a.e.~$\mx\in K_\mx$, $\nu_\mx$ 
is a Radon probability measure 
on $\S^{D-1}$.

\begin{remark}
Note that, in general, the limits on the right-hand side 
of \eqref{eq:slicing} do not commute. Consequently, 
we generally have only $\tilde\nu \leq \delta$, 
where $\delta$ denotes the (defect) measure 
obtained as the weak-$\star$ limit of 
$\mathbf{x} \mapsto (W_n(\mathbf{x}), W_n(\mathbf{x}))_H$. 
Naturally, if $H$ were finite-dimensional, 
the measures $\tilde\nu$ and $\delta$ would coincide.
\end{remark}

\medskip

\paragraph{{\it Step 3 (Integral representation of $F$)}}
Since $H=L^2(K_\mlambda)$, any trace-class operator
$S\in\mathcal{L}^1(H\times H)$ is an integral operator
with a kernel 
$$
G^S:=
\begin{pmatrix}
	G^S_{1,1} & G^S_{1,2}\\
	G^S_{2,1} & G^S_{2,2}
\end{pmatrix} ,
$$
where $G^S_{i,j}\in L^2(K_\mlambda\times K_\mlambda)$,
i.e., for any $\Phi\in H\times H$ it holds
$$
(S\Phi)(\mlambda_1) 
= \int_{K_\mlambda} G^S(\mlambda_1,\mlambda_2) 
\Phi(\mlambda_2) \,d\mlambda_2 \,.
$$
The mapping 
$$
\mathcal{L}^1(H\times H)
\ni S\overset{\mathcal{J}}{\mapsto} G^S \in 
L^2(K_\mlambda\times K_\mlambda;\C^{2\times 2})
$$ 
is linear, bounded
and injective
(note that the preceding discussion holds for the broader class of 
Hilbert-Schmidt operators where $\mathcal{J}$ is an isomorphism; 
cf.~\cite[Chapter 12]{Aubin:2000aa}).
Moreover, since $S$ is a trace-class operator, 
by \cite{Brislawn:1988aa} one can choose the kernel $G^S$ such that 
$$
\tr (S) = \int_{K_\mlambda} 
\tr_{\C^{2\times 2}} \bigl(G^S(\mlambda,\mlambda)\bigr)
\,d\mlambda 
= \int_{K_\mlambda} \Bigl(G^S_{1,1}(\mlambda,\mlambda) 
+ G^S_{2,2}(\mlambda,\mlambda)\Bigr)
\,d\mlambda,
$$
where we denoted by $\tr_{\C^{2\times 2}}$ 
the trace of $2\times 2$ complex matrices.

For $\nu$-a.e.~$\mzeta\in\mathbf{K}$ we 
have $F(\mzeta)\in \mathcal{L}^1(H\times H)$.
Let us denote by $G^{\mzeta}$ a kernel of $F(\mzeta)$.
Since $F\in L^1_\nu(\mathbf{K};
\mathcal{L}^1(H\times H))$ and the mapping 
$\mathcal{J}$ is bounded, the function 
$G(\mzeta,\cdot):=G^\mzeta$, $\nu$-a.e.~$\mzeta\in\mathbf{K}$,
is strongly measurable with respect to all variables and 
$G\in L^1_\nu(\mathbf{K};L^2(K_\mlambda\times K_\mlambda))$
(see page 15 and Proposition 1.2.25 
in \cite{Hytonen:2016aa}).

\medskip

\paragraph{{\it Step 4 (Connection with \eqref{mu-repr-h})}}

Let us take $\rho_1,\rho_2\in H$ and define
$\Phi_1:=(0,\rho_1)\in H\times H$, 
$\Phi_2=(\bar\rho_2,0)\in H\times H$.
The kernel of the rank-one operator 
$\Phi_1\otimes \Phi_2
=(\cdot\,,\Phi_2)_{H\times H}\Phi_1$ is
given by (we take the inner product 
to be antilinear in the second argument):  
$$
G^{\Phi_1\otimes\Phi_2}(\mlambda_1,\mlambda_2)=
\begin{pmatrix}
	0 & 0\\
	\rho_1(\mlambda_1)\rho_2(\mlambda_2) & 0
\end{pmatrix}.
$$
Hence, for $\nu$-a.e.~$\mzeta\in\mathbf{K}$, 
it holds (see \cite[Proposition 3.3]{Brislawn:1988aa})
\begin{align*}
	\tr\bigl((\Phi_1\otimes \Phi_2) F(\mzeta)\bigr)
	&= \int_{K_\mlambda}\int_{K_\mlambda}
	\tr_{\C^{2\times 2}}
	\bigl(G^{\Phi_1\otimes\Phi_2}(\mlambda_1,\mlambda_2)
	G(\mzeta,\mlambda_2,\mlambda_1)\bigr) 
	\, d\mlambda_2 d\mlambda_1 \\
	&= \int_{K_\mlambda}\int_{K_\mlambda} 
	\rho_1(\mlambda_1)\rho_2(\mlambda_2)
	G_{1,2}(\mzeta,\mlambda_2,\mlambda_1) 
	\, d\mlambda_2 d\mlambda_1 \\
	&= \int_{K_\mlambda} \rho_2(\mlambda) 
	G_{1,2}^{\rho_1}(\mzeta,\mlambda) \, d\mlambda,
\end{align*}
where 
$$
G_{1,2}^{\rho_1}(\mzeta,\mlambda) 
:= \int_{K_\mlambda} \rho_1(\mlambda_1) 
G_{1,2}(\mzeta,\mlambda,\mlambda_1) \,d\mlambda_1.
$$

Applying \eqref{eq:gerard-def} to the 
operator $A$ with the symbol 
$(\mx,\mxi)\mapsto \varphi(\mx)
\psi(\frac{\mxi}{|\mxi|}) \Phi_1\otimes\Phi_2$,
for $\varphi\in C^\infty_c(K_\mx)$ 
and $\psi\in C^\infty(\S^{D-1})$, 
we get 
\begin{align*}
	\int\limits_{K_\mx}\int\limits_{\S^{D-1}} \int\limits_{K_\mlambda}
	\varphi(\mx) & \psi\left(\frac{\mxi}{|\mxi|}\right) \rho_2(\mlambda)
	G_{1,2}^{\rho_1}(\mx,\mxi,\mlambda) \, d\mlambda d\nu(\mx,\mxi) \\
	&= \int\limits_{K_\mx}\int\limits_{\S^{D-1}} 
	\varphi(\mx) \psi\left(\frac{\mxi}{|\mxi|}\right)
	\tr\bigl((\Phi_1\otimes \Phi_2) F(\mx,\mxi)\bigr) \, d\nu(\mx,\mxi) \\
	&= \int\limits_{K_\mx}\int\limits_{\S^{D-1}} 
	\tr\left(\varphi(\mx) \psi\left(\frac{\mxi}{|\mxi|}\right)
	(\Phi_1\otimes \Phi_2) \, d M(\mx,\mxi)\right) \\
	&= \lim_{n\to\infty} \bigl(AW_n,W_n\bigr)_{L^2_{\mx}(H\times H)}.
\end{align*}
Since, for $\mx\in K_\mx$, 
\begin{align*}
	(AW_n)(\mx) &= \mathcal{A}_{\psi(\mxi/|\mxi|)}
	\Bigl(\varphi(\cdot) (W_n(\cdot),
	\Phi_2)_{H\times H}\Bigr)(\mx)\Phi_1\\
	&= \mathcal{A}_{\psi(\mxi/|\mxi|)}
	\Bigl(\varphi(\cdot) (u_n(\cdot),
	\bar\rho_2)_{H}\Bigr)(\mx)\Phi_1,
\end{align*}
we have
\begin{align*}
\bigl(AW_n, & W_n\bigr)_{L^2_{\mx}(H\times H)} \\
	&= \int\limits_{K_\mx} \int\limits_{K_\mlambda}
	\mathcal{A}_{\psi(\mxi/|\mxi|)}
	\biggl(\int_{K_\mlambda}
	\!\!\varphi(\cdot)\rho_2(\mlambda) u_n(\cdot,\mlambda) 
	\,d\mlambda\biggr)(\mx) \,
	\rho_1(\mlambda_1) \overline{v_n(\mx)} \, d\mlambda_1 d\mx \\
	&= C_{\rho_1} \int\limits_{K_\mx}
	\int\limits_{K_\mlambda} 
	\varphi(\mx) \rho_2(\mlambda)
	u_n(\mx,\mlambda)
	\overline{\mathcal{A}_{\bar\psi(\mxi/|\mxi|)}(v_n)(\mx)} 
	\,d\mlambda d\mx,
\end{align*}
where $C_{\rho_1}=\int_{K_\mlambda} 
\rho_1(\mlambda)\,d\mlambda$.

By combining these identities 
with \eqref{mu-repr-h} we finally obtain
\begin{equation*}
	C_{\rho_1} \mu(\varphi\rho_2\psi) 
	= \int\limits_{K_\mx}\int\limits_{\S^{D-1}}
	\int\limits_{K_\mlambda}
	\varphi(\mx) \psi\left(\frac{\mxi}{|\mxi|}\right)
	\rho_2(\mlambda)G_{1,2}^{\rho_1}(\mx,\mxi,\mlambda) 
	\, d\mlambda d\nu(\mx,\mxi).
\end{equation*}
If we take $\rho_1 \equiv 1$ on $K_\mlambda$,
then $C_{\rho_1} = \meas(K_\mlambda) > 0$.
Define
$$
g(\mx,\mxi,\mlambda) :=
\frac{1}{C_{\rho_1}}
G_{1,2}^{\rho_1}(\mx,\mxi,\mlambda).
$$
Then the representation formula
\eqref{eq:mu-repr-RN} holds when tested
against functions of the form
$\varphi \rho_2 \psi$, where
$\varphi \in C_c^\infty(K_\mx)$,
$\psi \in C^D(\S^{D-1})$, and
$\rho_2 \in L^2(K_\mlambda)$ are arbitrary.
Since the algebraic tensor product
$C_c^\infty(K_\mx)\otimes
C^{D}(\S^{D-1})\otimes
L^2(K_\mlambda)$
is dense in the Bochner space
$L^2(K_\mlambda;
C_0(K_\mx\times\S^{D-1}))$, 
the claim follows.
\end{proof}

The preceding theorem does not meet our requirements 
because, in the context of Theorem \ref{thm:main-result-determ}, 
the sequence $\seq{u_n}$ only converges weakly in $L^p$ for a 
potentially small value of $p>1$.  The following theorem 
(see \cite[Theorem 2.2]{LazarMitrovic:13} 
or \cite[Theorem 1.1]{Lazar:2017aa}) 
provides a precise statement regarding the existence of 
the broader concept of $H$-distributions, denoted as $B$ to 
distinguish it from the generalized $H$-measure 
$\mu$ of Theorem \ref{thm:gen-H-measure}.

\begin{theorem}\label{thm:H-distr}
Let $\seq{u_n}$ be bounded in $L^p(\R^{D+m})$, 
for some $p>1$, and uniformly compactly supported in  
$K_\mx \times K_\mlambda \Subset \R^D\times \R^m$. 
Let $\seq{v_n}$ be another sequence that is uniformly compactly 
supported in $K_\mx$ and weakly-$\star$ converges to 
zero in the space $L^\infty(\R^D)$. Let $r'>1$ be 
such that $\frac{1}{p}+\frac{1}{r'}<1$. Then, by passing to a 
subsequence (not relabelled), there exists 
a continuous bilinear functional $B$ defined on 
$L^{r'}(\R^{D+m}) \otimes C^D(\S^{D-1})$, such that 
for any $\phi \in L^{r'}(\R^{D+m})$ 
and $\psi \in C^D(\S^{D-1})$,
\begin{equation}\label{rev1}
	B\bigl(\phi, \psi\bigr)=\lim\limits_{n\to \infty}
	\int_{\R^{D+m}} \phi(\mx,\mlambda)u_n(\mx,\mlambda) 
	\overline{\cA_{\overline{\psi}(\mxi/\abs{\mxi})} (v_n)(\mx)} 
	\, d\mlambda\,d\mx,
\end{equation}
where $\cA_{\overline{\psi}(\mxi/\abs{\mxi})}$ is the Fourier 
multiplier operator on $\R^D$ with 
symbol $\psi(\mxi/\abs{\mxi})$. 

The continuous bilinear functional $B$ will be referred to 
as the $H$-distribution (generated by the 
sequences $\seq{u_n}$ and $\seq{v_n}$).
\end{theorem}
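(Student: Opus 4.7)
The plan is to follow the strategy already successfully used for the stochastic analog (Theorem \ref{thm:H-distr-s}), adapted to the deterministic setting. First I would define the sequence of bilinear functionals
\[
B_n(\phi,\psi) := \int_{\R^{D+m}} \phi(\mx,\mlambda)\, u_n(\mx,\mlambda)\,
\overline{\cA_{\bar\psi(\mxi/|\mxi|)}(v_n)(\mx)}\, d\mlambda\, d\mx,
\qquad n\in\N,
\]
on the algebraic tensor product $L^{r'}(\R^{D+m})\otimes C^D(\S^{D-1})$, which is dense in $L^{r'}(\R^{D+m};C^D(\S^{D-1}))$. Each $B_n$ is obviously bilinear, and the entire argument reduces to establishing a uniform (in $n$) bound of the form $|B_n(\phi,\psi)|\lesssim \|\phi\|_{L^{r'}}\,\|\psi\|_{C^D(\S^{D-1})}$.

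To obtain this bound, I would pick $q\in(1,\infty)$ determined by $\tfrac1{r'}+\tfrac1p+\tfrac1q=1$, which is admissible precisely because of the standing assumption $\tfrac1p+\tfrac1{r'}<1$. Applying the generalised Hölder inequality in $(\mx,\mlambda)$ splits the integrand as a product of three factors living in $L^{r'}$, $L^p$, and $L^q$ respectively. The first two are under control from the hypotheses on $\phi$ and $u_n$ (together with the uniform compact support of $u_n$ in $K_\mx\times K_\mlambda$). For the third factor, I would invoke the Marcinkiewicz multiplier theorem (Theorem \ref{thm:multipliers}), which tells us that $\mxi\mapsto\psi(\mxi/|\mxi|)$ is an $L^q$ multiplier with operator norm controlled by $\|\psi\|_{C^D(\S^{D-1})}$. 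Since $v_n$ is uniformly compactly supported in $K_\mx$ and bounded in $L^\infty$, it is in particular uniformly bounded in $L^q(\R^D)$, so $\cA_{\bar\psi(\mxi/|\mxi|)}(v_n)$ is uniformly bounded in $L^q(\R^D)$ with norm $\lesssim \|\psi\|_{C^D(\S^{D-1})}\,\|v_n\|_{L^q}$.

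Combining these three factors yields $|B_n(\phi,\psi)|\lesssim_{K_\mx,K_\mlambda,D,q}\|\phi\|_{L^{r'}}\,\|\psi\|_{C^D(\S^{D-1})}$ uniformly in $n$, so $\{B_n\}$ is bounded in the dual space $\bigl(L^{r'}(\R^{D+m})\otimes C^D(\S^{D-1})\bigr)'$. By the Banach--Alaoglu theorem (applied in a separable setting, or along a countable dense subset followed by a diagonal extraction), a subsequence converges weakly-$\star$ to a continuous bilinear functional $B$, which by construction satisfies \eqref{rev1}.

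I expect the only subtle point to be the verification of the uniform multiplier bound on $L^q$ for the Fourier multiplier with symbol $\psi(\mxi/|\mxi|)$. The potential obstacle is that this symbol is singular at the origin, but Theorem \ref{thm:multipliers} explicitly covers symbols of this form, with the multiplier constant controlled by $\|\psi\|_{C^D(\S^{D-1})}$. Once this is in hand, the rest is a routine Hölder/Banach--Alaoglu argument, entirely parallel to the proof of Theorem \ref{thm:H-distr-s} given in the body of the paper.
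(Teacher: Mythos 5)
Your proposal is correct and is essentially the argument the paper itself uses: Theorem \ref{thm:H-distr} is quoted from \cite{LazarMitrovic:13} and \cite{Lazar:2017aa}, and the proof given in the body of the paper for the stochastic analogue (Theorem \ref{thm:H-distr-s}) is exactly your scheme of uniform bounds for $B_n$ via the three-exponent H\"older inequality plus the Marcinkiewicz multiplier theorem, followed by a weak-$\star$ (Banach--Alaoglu) extraction. No gaps.
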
 

The next theorem expands the scope of the 
$H$-distribution $B$ from \eqref{rev1}, initially 
defined on the tensor product space $L^{r'}(\R^{D+m}) 
\otimes C^{D}(\S^{D-1})$, by extending it to a 
continuous linear functional on the Bochner space 
$L^{r'}(\R^{D+m};C^D(\S^{D-1}))$. The proof can be 
found in \cite[Corollary 2.3]{LazarMitrovic:13}
or \cite[Proposition 6]{Misur:2015aa}.

\begin{theorem}\label{thm:H-distr-extend}
The bilinear functional $B:L^{r'}(\R^{D+m}) \times C^D(\S^{D-1})\to \R$,
which is defined by \eqref{rev1}, can be extended to a 
continuous linear functional acting on the Bochner 
space $L^{r'}(\R^{D+m};C^D(\S^{D-1}))$. 
\end{theorem}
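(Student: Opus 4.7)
The plan is to closely mirror the argument used in the proof of Corollary \ref{extension} from the stochastic section, since the deterministic setting is structurally the same (just without the expectation over $\Omega$). The strategy rests on two ingredients: density of simple tensor functions in the Bochner space, and a uniform continuity estimate obtained via the commutation lemma, Marcinkiewicz's theorem, and a double application of Hölder.

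First, I would note that any $\varphi \in L^{r'}(\R^{D+m}; C^D(\S^{D-1}))$ can be approximated in norm by simple functions of the form
\begin{equation*}
    \varphi^N(\mx,\mlambda,\mxi) = \sum_{k=1}^N \psi_k^N(\mxi)\, \chi_k^N(\mx,\mlambda),
\end{equation*}
where $\psi_k^N \in C^D(\S^{D-1})$ and $\{\chi_k^N\}_{k=1}^N$ are characteristic functions of mutually disjoint measurable sets; see \cite[Lemma 1.2.19]{BanachI:2016}. Such $\varphi^N$ lie in the algebraic tensor product $L^{r'}(\R^{D+m}) \otimes C^D(\S^{D-1})$, on which $B$ is already defined by \eqref{rev1}. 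Hence the extension will follow once I establish an $N$-uniform bound $|B(\varphi^N)| \lesssim \|\varphi^N\|_{L^{r'}(\R^{D+m}; C^D(\S^{D-1}))}$.

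Second, I would expand $B(\varphi^N)$ using \eqref{rev1}, exploit $\chi_k^N = (\chi_k^N)^2$, and invoke the commutation lemma to move one copy of $\chi_k^N$ inside the multiplier operator $\cA_{\bar\psi_k^N(\mxi/|\mxi|)}$, which is legitimate because the error (commutator) term vanishes in the $n \to \infty$ limit. I then insert the weights $\|\psi_k^N\|_{C^D(\S^{D-1})}$ and $\|\psi_k^N\|_{C^D(\S^{D-1})}^{-1}$ and apply the discrete Hölder inequality to obtain an expression of the form
\begin{equation*}
    |B(\varphi^N)| \leq \limsup_{n\to\infty} \int \Bigl(\sum_k \|\psi_k^N\|_{C^D}^{r'} \chi_k^N\Bigr)^{1/r'} \Bigl(\sum_k \chi_k^N |u_n|^p\Bigr)^{1/p} \Bigl(\sum_k \|\psi_k^N\|_{C^D}^{-q} |\cA_{\bar\psi_k^N}(\chi_k^N v_n)|^q\Bigr)^{1/q} d\mx d\mlambda,
\end{equation*}
where $q$ is chosen so that $\tfrac{1}{r'} + \tfrac{1}{p} + \tfrac{1}{q} = 1$, which is possible by the hypothesis $\tfrac{1}{p}+\tfrac{1}{r'}<1$.

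Third, I would apply the integral Hölder inequality with exponents $(r',p,q)$ and invoke the Marcinkiewicz multiplier theorem (Theorem \ref{thm:multipliers}) to bound the operator norm of $\cA_{\bar\psi_k^N(\mxi/|\mxi|)}$ on $L^q$ by a constant times $\|\psi_k^N\|_{C^D(\S^{D-1})}$. The chosen weights then cancel the multiplier norms, and by the uniform $L^p$-bound on $\{u_n\}$, the uniform $L^\infty$-bound on $\{v_n\}$ (with common compact support in $K_\mx$), and the disjointness of the $\chi_k^N$, the three factors recombine into $\|\varphi^N\|_{L^{r'}(K_\mx \times K_\mlambda; C^D(\S^{D-1}))}$, as desired.

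The main obstacle is the second step: ensuring that the commutation of $\chi_k^N$ with $\cA_{\bar\psi_k^N(\mxi/|\mxi|)}$ introduces only an error that dies in the $n$-limit, which relies on the compactness of commutators of smooth-symbol Fourier multipliers with multiplication by indicator functions (precisely the content of the commutation lemma). Everything else is essentially a careful bookkeeping exercise combining Hölder with Marcinkiewicz's bound, so that the operator norms carried by the symbols $\psi_k^N$ reassemble themselves into the correct Bochner norm of $\varphi^N$.
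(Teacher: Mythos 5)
Your proposal is correct and is essentially the argument the paper itself uses: the paper proves this statement by citation to \cite[Corollary 2.3]{LazarMitrovic:13} and \cite[Proposition 6]{MisurMitrovic-CC:15}, but the identical scheme—density of simple functions $\sum_k \psi_k^N\chi_k^N$ in the Bochner space, $\chi_k^N=(\chi_k^N)^2$ plus the commutation lemma, then weighted discrete and integral H\"older combined with the Marcinkiewicz bound to get an $N$-uniform estimate—is written out in full in the proof of the stochastic analogue, Corollary \ref{extension}, and your deterministic version matches it step for step.
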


In other 
words, the $H$-distribution $B$ is an element of 
$L^r_{w\star}\bigl(\R^{D+m};(C^D(\S^{D-1}))'\bigr)$
(see Remark \ref{rem:weak-Bochner-spaces}).

The next lemma is a demonstration of the non-degeneracy 
\eqref{non-deg} of the drift vector $\mff=\mff(t,\mx,\mlambda)$. 
The proof is a conventional one, referring to, for 
instance, \cite[Theorem 3.4]{LazarMitrovic:16} for details.   
The lemma in question utilises an object denoted 
by $\mu$, which is the generalised $H$-measure 
of Theorem \ref{thm:gen-H-measure}, generated by a 
uniformly compactly supported sequence $\seq{u_n}$ 
that is bounded in $L^p(\mathbb{R}^{d+1+m})$, for some 
large $p$, and a uniformly compactly supported 
sequence  $\seq{v_n}$ that weakly-$\star$ converges 
to zero in $L^\infty(\mathbb{R}^{d+1})$. 
We use $\mxi=(\mxi_0,\mxi')\in \R^{d+1}$ 
to denote the dual (Fourier) variable of $(t,\mx)$.

\begin{lemma}\label{loc-p}
	Assuming that $\mff=\mff(t,\mx,\mlambda)$ is a vector-field that satisfies 
	\eqref{eq:intro-ass2} and the non-degeneracy condition \eqref{non-deg}, and 
	letting $\mu$ denote the generalized $H$-measure of 
	Theorem \ref{thm:gen-H-measure}, we have the following result: for any 
	$K\times L\Subset \R^{d+1}\times \R^m$ and 
	$\phi \in L^\infty(K\times L)\otimes C^{d+1}(\S^d)$,
	\begin{equation}\label{assum-11}
	\lim\limits_{\delta\to 0} \action{\frac{\delta \abs{\mxi'}^2 \phi}
		{\abs{\xi_0+\mff \cdot \mxi'}^2
			+\delta  \abs{\mxi'}^2}}{\mu} = 0.
	\end{equation}
\end{lemma}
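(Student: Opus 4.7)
The plan is to pass to the limit $\delta\downarrow 0$ inside the pairing $\actionb{H_\delta \phi}{\mu}$, where $H_\delta=\frac{\delta\abs{\mxi'}^2}{\abs{\xi_0+\mff\cdot\mxi'}^2+\delta\abs{\mxi'}^2}$, and then recognize the resulting expression as vanishing due to the non-degeneracy assumption \eqref{non-deg} combined with the structural information about $\mu$. The starting observation is that $0\le H_\delta\le 1$ uniformly in $\delta$ (with values in $\S^d$), and that pointwise
\begin{equation*}
	H_\delta(t,\mx,\mlambda,\mxi)\todelta
	\En_E(t,\mx,\mlambda,\mxi),
	\quad E:=\bigl\{\xi_0+\mff(t,\mx,\mlambda)\cdot\mxi'=0,\ \mxi'\neq 0\bigr\},
\end{equation*}
with no issue at the north/south poles $\mxi'=0$ of $\S^d$ since the numerator of $H_\delta$ vanishes there.

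Next, I would use the representation $\mu=g\,d\nu\,d\mlambda$ together with $g\in L^2(\R^m;L^1_\nu)$ and the fact that $\phi$ has compact support in $\mlambda\in L$ (hence $\int_L|g|\,d\mlambda\le \abs{L}^{1/2}\norm{g(\cdot,\mxi,t,\mx)}_{L^2_\mlambda(L^1_\nu)}$ after Fubini and Cauchy--Schwarz) to produce the integrable dominant $\|\phi\|_\infty \En_{K\times L}|g|$. Lebesgue's dominated convergence theorem then yields
\begin{equation*}
	\lim_{\delta\downarrow 0}\actionb{H_\delta\phi}{\mu}
	=\int_{\R^m}\!\!\int_{\R^{d+1}\times\S^d}
	\En_E(t,\mx,\mlambda,\mxi)\,\phi\,g\,d\nu(t,\mx,\mxi)\,d\mlambda.
\end{equation*}
Applying Fubini's theorem to exchange the $\mlambda$--integral with the $\nu$--integral (which is justified by the same integrable dominant), this rewrites as
\begin{equation*}
	\int_{\R^{d+1}\times\S^d}\!\Biggl(\int_L
	\En_E(t,\mx,\mlambda,\mxi)\,\phi\,g\,d\mlambda\Biggr)\,d\nu(t,\mx,\mxi).
\end{equation*}

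The concluding step is where the non-degeneracy assumption enters. Since \eqref{non-deg} states that
\begin{equation*}
	\iint_K\sup_{\abs{\mxi}=1}\meas\bigl\{\mlambda\in L:\xi_0+\mff\cdot\mxi'=0\bigr\}\,d\mx\,dt=0,
\end{equation*}
there is a Lebesgue-null set $\mathcal{N}\subset K$ such that for every $(t,\mx)\in K\setminus\mathcal{N}$ and every $\mxi\in\S^d$ the slice $\{\mlambda\in L:\xi_0+\mff(t,\mx,\mlambda)\cdot\mxi'=0\}$ has zero Lebesgue measure; consequently the inner $\mlambda$-integral vanishes pointwise on $(K\setminus\mathcal{N})\times\S^d$. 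Finally, because the $\R^{d+1}$--projection of $\nu$ is absolutely continuous with respect to $dt\,d\mx$, the set $\mathcal{N}\times\S^d$ has $\nu$-measure zero, so the outer integral is zero, establishing \eqref{assum-11}. The only delicate point in this plan is ensuring the measurability and Fubini-justification of the inner integrand in $\mlambda$ for $\nu$-a.e. $\mxi$; this follows routinely from the Carath\'eodory structure $(t,\mx,\mlambda)\mapsto \xi_0+\mff(t,\mx,\mlambda)\cdot\mxi'$ and the $L^2(\R^m;L^1_\nu)$ regularity of $g$, so no substantial obstacle arises beyond bookkeeping.
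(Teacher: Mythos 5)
Your proposal is correct and follows essentially the same route as the paper's proof: both rely on the representation $\mu=g\,d\mlambda\,d\nu$, dominated convergence to pass $\delta\to 0$ inside the pairing, the non-degeneracy condition to kill the $\mlambda$-slices, and the absolute continuity of the $(t,\mx)$-marginal of $\nu$ to upgrade ``a.e.\ $(t,\mx)$'' to ``$\nu$-a.e.''. The only cosmetic difference is the order of operations (you first identify the pointwise limit $\En_E$ and then invoke \eqref{non-deg}, whereas the paper invokes \eqref{non-deg} first to get pointwise convergence to zero and then applies dominated convergence twice), which does not change the substance.
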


\begin{proof}
	Using \eqref{eq:mu-repr-RN} and \eqref{eq:slicing}, 
	we can rewrite the expression 
	inside the limit in \eqref{assum-11} as follows:
	\begin{equation}\label{eq:limit-tmp1}
	\action{\frac{\delta \abs{\mxi'}^2 \phi}
		{\abs{\xi_0+\mff \cdot \mxi'}^2
			+\delta  \abs{\mxi'}^2}}{\mu}
	=\int_{K\times \S^d}
	I_\delta(t,\mx,\mxi) \, d\nu(t,\mx,\mxi),
	\end{equation}
	where 
	$$
	I_\delta(t,\mx,\mxi)
	=\int_{L} \frac{\delta \abs{\mxi'}^2 \phi}
	{\abs{\xi_0+\mff \cdot \mxi'}^2+\delta  \abs{\mxi'}^2} 
	\, g(t,\mx,\mlambda,\mxi) \, d\mlambda.
	$$
	According to the non-degeneracy condition \eqref{non-deg}, for 
	a.e.~$(t,\mx)\in K$, every $\mxi\in \S^d$, 
	and a.e.~$\mlambda\in L$,
	$$
	\frac{\delta \abs{\mxi'}^2 \phi}
	{\abs{\xi_0+\mff(t,\mx,\mlambda) \cdot \mxi'}^2
		+\delta  \abs{\mxi'}^2}\todelta 0. 
	$$ 
	Thus, by the dominated convergence theorem, 
	we conclude that $I_\delta(t,\mx,\mxi)\todelta 0$ 
	for a.e.~$(t,\mx)$ and every $\mxi$. 
	
	Since $\nu$ is absolutely continuous with respect to 
	the Lebesgue measure $d\mx\, dt$, 
	$I_\delta(t,\mx,\mxi)\todelta 0$ 
	for $\nu$-a.e.~$(t,\mx,\mxi)$. It is also clear that
	$$
	\abs{I_\delta(t,\mx,\mxi)}
	\leq \int_{L}  \abs{\phi(t,\mx,\mlambda,\mxi)}
	\abs{g(t,\mx,\mlambda,\mxi)} \, d\mlambda
	\in L_\nu^1(\R^{d+1} \times \S^d).
	$$
	Another application of the dominated 
	convergence theorem then results in
	$$
	I_\delta\todelta 0 \quad 
	\text{in $L_\nu^1(\R^{d+1} \times \S^d)$}.
	$$
	By combining this with \eqref{eq:limit-tmp1}, we obtain \eqref{assum-11}. 
	For related details, see \cite[p.~254]{Lazar:2012aa}.
\end{proof}

We need the following technical lemma, which shows that 
the Fourier multiplier operators 
having a symbol of the form 
$\frac{\delta \abs{\mxi'}^2}{\abs{\xi_0+F(\mlambda)\cdot \mxi'}^2
+\delta \abs{\mxi'}^2}$, for $\delta>0$, are bounded 
on $L^p$ spaces with $p>1$, as 
long as $F$ is locally integrable. 

\begin{lemma}\label{Lp-m}
For $F=F(\mlambda) \in L^1_{\loc}(\R^m;\R^d)$ 
and $\delta>0$, set 
$$
\psi_{F,\delta}=\psi_{F,\delta}(\mxi;\mlambda)
:=\frac{\delta \abs{\mxi'}^2}
{\abs{\xi_0+F(\mlambda)\cdot \mxi'}^2
+\delta \abs{\mxi'}^2}.
$$ 
Then, for any $p\in (1,\infty)$, the Fourier multiplier operator 
$\cA_{\psi_{F,\delta}}:L^p(\R^{d+1}) \to L^p(\R^{d+1})$
is bounded independently of $\mlambda$, $\delta$ and $F$. 
\end{lemma}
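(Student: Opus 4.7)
The plan is to reduce $\psi_{F,\delta}$ to a universal, parameter-free symbol via two successive linear changes of variables in frequency, each of which corresponds on the physical side to a composition operator whose norm cancels exactly against that of its inverse upon conjugation. Throughout, $\mlambda$ is held fixed, so I write $F=F(\mlambda)$.

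First, I would eliminate $F$ using the unimodular shear $B\mxi:=(\xi_0+F\cdot\mxi',\mxi')$, under which
\begin{equation*}
\psi_{F,\delta}(\mxi)=\Psi_\delta(B\mxi), \qquad \Psi_\delta(\eta_0,\meta'):=\frac{\delta\abs{\meta'}^2}{\eta_0^2+\delta\abs{\meta'}^2}.
\end{equation*}
Using the standard identity $\widehat{f\circ A}(\mxi)=\abs{\det A}^{-1}\hat f(A^{-T}\mxi)$ together with the composition operator $T_A f(\mx):=f(A\mx)$, a short computation yields the conjugation formula $\cA_{m\circ A^{T}}=T_A^{-1}\circ\cA_m\circ T_A$. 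Applied with $A=B^T$, this gives $\cA_{\psi_{F,\delta}}=T_{B^T}^{-1}\circ\cA_{\Psi_\delta}\circ T_{B^T}$. Because $\abs{\det B}=1$, the operator $T_{B^T}$ is an isometry of every $L^p(\R^{d+1})$, so the conjugation preserves the operator norm exactly and $\norm{\cA_{\psi_{F,\delta}}}_{L^p\to L^p}=\norm{\cA_{\Psi_\delta}}_{L^p\to L^p}$, which no longer depends on $F$ or $\mlambda$.

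Second, I would remove $\delta$ via the anisotropic dilation $D_\delta(\eta_0,\meta'):=(\delta^{1/2}\eta_0,\meta')$, for which $\Psi_\delta=\Psi_1\circ D_\delta$ with $\Psi_1(\meta)=\abs{\meta'}^2/\abs{\meta}^2$. The same conjugation formula gives $\cA_{\Psi_\delta}=T_{D_\delta}^{-1}\circ\cA_{\Psi_1}\circ T_{D_\delta}$. Although $T_{D_\delta}$ is not an isometry on $L^p$, its operator norm $\delta^{-1/(2p)}$ and that of $T_{D_\delta}^{-1}=T_{D_\delta^{-1}}$ equal to $\delta^{1/(2p)}$ are reciprocals; combining this estimate with its analogue obtained from the inverse substitution $\Psi_1=\Psi_\delta\circ D_\delta^{-1}$ yields $\norm{\cA_{\Psi_\delta}}_{L^p\to L^p}=\norm{\cA_{\Psi_1}}_{L^p\to L^p}$, now completely free of parameters.

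Finally, it remains to verify that
\begin{equation*}
\Psi_1(\meta)=\frac{\abs{\meta'}^2}{\abs{\meta}^2}=\sum_{j=1}^{d}\frac{\eta_j^2}{\abs{\meta}^2}
\end{equation*}
is an $L^p(\R^{d+1})$ multiplier for every $p\in(1,\infty)$: each summand is such a multiplier by the second part of Theorem \ref{thm:riesz}, and one may alternatively invoke Theorem \ref{thm:multipliers}, since $\Psi_1$ is smooth and zero-homogeneous on $\R^{d+1}\setminus\{0\}$. The only delicate point in the argument is the anisotropic scaling step, where the composition operator is not an isometry; but the reciprocal relation between the norms of $T_{D_\delta}$ and $T_{D_\delta^{-1}}$ makes the conjugation norm-preserving nonetheless, and the resulting bound is independent of $F$, $\delta$, and $\mlambda$ as required.
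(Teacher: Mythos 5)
Your proof is correct and takes essentially the same route as the paper: an invertible linear change of frequency variables (which you justify from scratch by conjugating with composition operators, whereas the paper simply cites the invariance of $L^p$ multiplier norms under linear maps) reduces $\psi_{F,\delta}$ to the universal zero-homogeneous symbol $\abs{\meta'}^2/(\eta_0^2+\abs{\meta'}^2)$, which is then handled by the Marcinkiewicz theorem (the paper's choice) or, as you alternatively note, by the boundedness of the operators with symbols $\eta_j^2/\abs{\meta}^2$. One small slip that does not affect the structure of the argument: your dilation should be $(\eta_0,\meta')\mapsto(\delta^{-1/2}\eta_0,\meta')$ (or, as in the paper, $\meta'\mapsto\sqrt{\delta}\,\meta'$), since $\Psi_1\circ D_\delta$ as written equals $\abs{\meta'}^2/(\delta\eta_0^2+\abs{\meta'}^2)\neq\Psi_\delta$; the reciprocal-norm cancellation works equally well for the corrected dilation.
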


\begin{proof}
We leverage the fact that a linear 
transformation preserves the $L^p\to L^p$ norm of 
a Fourier multiplier operator, i.e., for any invertible 
matrix $M$, we have that $\norm{\cA_\psi}_{L^p\to L^p}
=\norm{\cA_{\psi(M^{-1}\cdot)}}_{L^p\to L^p}$,  
see e.g.~\cite[Lemma 12]{Erceg:2023aa}.
Thus, we introduce the change of variables 
$\sigma=\xi_0+F(\mlambda)\cdot \mxi'$ 
and $\meta=\sqrt{\delta}\mxi'$, and 
observe that it suffices to analyse the norm of the 
multiplier operator $\cA_{\Psi}$ with 
$\Psi:=\frac{ \abs{\meta}^2}{\sigma^2+\abs{\meta}^2}$. 

By applying the Marcinkiewicz multiplier 
theorem (Theorem \ref{thm:multipliers}), 
noting that $\R^{d+1}\setminus \seq{0}
\ni(\sigma,\eta)\mapsto \frac{\abs{\meta}^2}{\sigma^2+\abs{\meta}^2}$ 
is an admissible symbol, it is easy to show that 
$\norm{\cA_{\Psi}}_{L^p\to L^p} \leq C$, 
where the constant $C=C(d,p)$ does not depend on 
$\delta$, $F$.
\end{proof}

To establish Theorem \ref{thm:main-result-determ}, we rely 
on the following technical lemma, which builds upon the 
previous lemma and is used in a crucial step of the proof.

\begin{lemma}\label{crucial} 
Consider the characteristic 
functions $\chi_j=\chi_j(t,\mx)$, for $j=1,\ldots,N$,  
defined in \eqref{eq:f-approx}. 
Suppose that $h$ belongs to $L^p(\R^{d+1+m})$ 
and $v$ belongs to $L^{p'}(\R^{d+1})$, with $p\in (1,\infty)$ and 
$\frac{1}{p}+\frac{1}{p'}=1$, and both $h$ 
and $v$ have compact supports. Consider $N$ 
vector-functions $F_1,\ldots,F_N\in L^1_{\loc}(\R^m;\R^d)$. 
Fix  $\rho\in C_c(\R^m)$ and $\delta>0$. Then
\begin{align}\nonumber
	&\abs{\int_{\R^{d+m+1}} \sum_{j=1}^N 
	\rho(\mlambda)h(t,\mx,\mlambda)\chi_j(\mx)\,
	\cA_{\frac{\delta \abs{\mxi'}^2}
	{\abs{\xi_0+F_j(\mlambda)\cdot \mxi'}^2
	+\delta \abs{\mxi'}^2}}\left(v \chi_j\right)(t,\mx) 
	\, d\mlambda\, d\mx \, dt} 
	\\ & \qquad 
	\leq C \norm{h}_{L^{p}(\R^{d+1+m})} 
	\norm{\rho v}_{L^{p'}(\R^{d+1+m})},
	\label{cruc1}
\end{align} 
for a constant $C$ independent of $N$, 
$\delta$, and $F_1,\ldots,F_N$.
\end{lemma}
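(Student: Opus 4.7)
The plan is to exploit two structural features simultaneously: the uniform $L^p$ boundedness of each multiplier $\cA_{\psi_{F_j,\delta}}$ provided by Lemma \ref{Lp-m}, and the disjointness of the supports of the characteristic functions $\chi_j$ (from the partition \eqref{eq:part-unity}), which is what keeps the final constant independent of $N$. Without the disjointness, naively summing the $N$ bounds from Lemma \ref{Lp-m} would yield an $N$-dependent estimate, so this is the main obstacle to watch out for.

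Concretely, I would first apply H\"older's inequality in $(t,\mx)$ with exponents $p$ and $p'$ (fixing $\mlambda$), and then H\"older in $\mlambda$ with exponents $p$ and $p'$, reducing the required estimate to a uniform-in-$\mlambda$ control of
$$
\Sigma(t,\mx,\mlambda):=\sum_{j=1}^N \chi_j(t,\mx)\,
\cA_{\psi_{F_j(\mlambda),\delta}}\!\left(v\,\chi_j\right)(t,\mx)
$$
in $L^{p'}_{t,\mx}$. Because the sets $A_j$ partition $K$ (they are pairwise disjoint), at each point $(t,\mx)$ at most one of the indicators $\chi_j(t,\mx)$ is nonzero. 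Consequently,
$$
\abs{\Sigma(t,\mx,\mlambda)}^{p'}=\sum_{j=1}^N \chi_j(t,\mx)\,
\bigl|\cA_{\psi_{F_j(\mlambda),\delta}}(v\chi_j)(t,\mx)\bigr|^{p'}.
$$

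Integrating in $(t,\mx)$ and invoking Lemma \ref{Lp-m}, which yields a bound $C=C(d,p')$ for the operator norm of $\cA_{\psi_{F_j(\mlambda),\delta}}$ on $L^{p'}(\R^{d+1})$ independent of $j$, $\delta$, $F_j$, and $\mlambda$, I obtain
$$
\norm{\Sigma(\cdot,\cdot,\mlambda)}_{L^{p'}_{t,\mx}}^{p'}
\le \sum_{j=1}^N C^{p'}\norm{v\chi_j}_{L^{p'}_{t,\mx}}^{p'}
= C^{p'}\norm{v}_{L^{p'}_{t,\mx}}^{p'},
$$
where the last equality uses the disjointness of the $\chi_j$ once more. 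Thus $\norm{\Sigma(\cdot,\cdot,\mlambda)}_{L^{p'}_{t,\mx}}\le C\norm{v}_{L^{p'}_{t,\mx}}$ uniformly in $\mlambda$ and $N$.

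Plugging this into the two successive H\"older estimates then gives
$$
\left|\int \rho(\mlambda)\,h(t,\mx,\mlambda)\,\Sigma(t,\mx,\mlambda)
\,d\mlambda\,d\mx\,dt\right|
\le C \norm{v}_{L^{p'}_{t,\mx}}
\int \abs{\rho(\mlambda)}\,\norm{h(\cdot,\cdot,\mlambda)}_{L^p_{t,\mx}}d\mlambda,
$$
and a final H\"older in $\mlambda$ (exponents $p$, $p'$) bounds the right-hand side by $C\norm{\rho}_{L^{p'}_\mlambda}\norm{v}_{L^{p'}_{t,\mx}}\norm{h}_{L^p_{t,\mx,\mlambda}}=C\norm{\rho v}_{L^{p'}(\R^{d+1+m})}\norm{h}_{L^p(\R^{d+1+m})}$, which is precisely \eqref{cruc1}. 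Compact supports of $h,v,\rho$ enter only to ensure finiteness of the quantities involved and play no quantitative role.
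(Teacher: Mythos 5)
Your proof is correct and follows essentially the same route as the paper's: the uniform multiplier bound of Lemma \ref{Lp-m} combined with the disjointness of the $\chi_j$ appearing both outside and inside the operator is exactly the mechanism the paper uses to get an $N$-independent constant. The only cosmetic differences are that you exploit the pointwise fact that at most one term of the sum is nonzero where the paper invokes the discrete H\"older inequality, and you apply H\"older successively in $(t,\mx)$ and then $\mlambda$ rather than jointly; both variants yield \eqref{cruc1}.
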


\begin{proof}
By applying the discrete H{\"o}lder inequality followed 
by the integral variant, 
\begin{align*}
	& \abs{\int_{\R^{d+m+1}} 
	\sum_{j=1}^N \rho(\mlambda)h(t,\mx,\mlambda)\,
	\chi_j(\mx) \cA_{\frac{\delta \abs{\mxi'}^2}
	{\abs{\xi_0+F_j(\mlambda)\cdot \mxi'}^2+\delta \abs{\mxi'}^2}}
	\left(v \chi_j\right)(t,\mx) d\mlambda\, d\mx \,dt}
	\\& \quad 
	\leq \int_{\R^{d+m+1}} 
	\left(\sum\limits_{j=1}^N \abs{h(t, \mx,\mlambda)}^p 
	\chi_j(t,\mx)\right)^{1/p} 
	\\ & \qquad\qquad \times
	\left(\sum_{j=1}^N \abs{\cA_{\frac{\delta \abs{\mxi'}^2}
	{\abs{\xi_0+F_j(\mlambda)\cdot \mxi'}^2+\delta \abs{\mxi'}^2}}
	\left(\rho v \chi_j\right)(t,\mx)}^{p'} \right)^{1/p'} 
	\, d\mlambda \, d\mx \, dt
	\\ & \quad
	\leq \norm{\sum_{j=1}^N h \chi_j}_{L^p(\R^{d+m+1})}  
	\\ & \qquad\qquad \times
	\left(\,\int_{\R^{d+1+m}} \sum_{j=1}^N 
	\abs{\cA_{\frac{\delta \abs{\mxi'}^2}
	{\abs{\xi_0+F_j(\mlambda)\cdot \mxi'}^2+\delta \abs{\mxi'}^2}}
	\left(\rho v \chi_j\right)(t,\mx)}^{p'}  
	\, d\mlambda \, d\mx \, dt\right)^{1/p'}
	\\ & \quad 
	\leq C \norm{h}_{L^p(\R^{d+1+m})}  
	\norm{\sum\limits_{j=1}^N 
	\abs{\rho v\chi_j}}_{L^{p'}(\R^{d+1+m})}
	\\ & \quad 
	\leq C \norm{h}_{L^p(\R^{d+1+m})}  
	\norm{\rho v}_{L^{p'}(\R^{d+1+m})},
\end{align*} 
where, in the third step, we utilized Lemma \ref{Lp-m}. 
To obtain estimates that are $N$-independent, it is crucial to 
have $\chi_j$ both in front of and under the multiplier 
operator. With this observation, we conclude the 
proof of \eqref{cruc1}.
\end{proof}

Let $A_\psi$ be a Fourier multiplier operator, and let 
$M_b$ represent the multiplication operator by a function $b$. 
Consider the commutator
$$
\cC:=\bigl[\cA_\psi, M_b] 
= \cA_\psi M_b - M_b \cA_\psi.
$$
The subsequent ``non-smooth $b$" version of Tartar's 
commutation lemma has been employed in this paper 
(see \cite[Theorem 3 \& Remark 1]{Antonic:2018aa} 
and the cited sources).

\begin{lemma}\label{lem:commutation}
Consider three numbers $p$, $\bar{p}$, and $q$ such that $p$ 
belongs to the interval $(1,\infty)$, $\bar{p}$ is strictly greater than $p$, 
and $\frac{1}{p} + \frac{1}{\bar{p}} + \frac{1}{q} < 1$. Let $b$ be an 
element of $L^q(\mathbb{R}^D)$, where $D \geq 1$. 
Suppose $\psi$ is a bounded function in $C^\kappa(\mathbb{R}^D \setminus {0})$, 
$\kappa := \left[\frac{D}{2}\right] + 1$, and 
satisfies \eqref{eq:multipliers} (or the more general 
``H{\"o}rmander" condition). Let $\seq{u_n}$ be a sequence that 
converges weakly to $0$ in $L^{\bar{p}}(\mathbb{R}^D)$, and let 
$\varphi$ be an element of $C_c^\infty(\mathbb{R}^D)$. 
Then it follows that 
$$
\cC\bigl(\varphi u_n\bigr)\ton 0 
\quad \text{in $L_{\loc}^p(\R^D)$}.
$$
\end{lemma}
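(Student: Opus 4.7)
The plan is a density reduction to the classical (smooth-symbol) Tartar commutation lemma. Choose a sequence $\seq{b_\eps}\subset C_c^\infty(\R^D)$ with $b_\eps \to b$ in $L^q(\R^D)$ as $\eps\to 0$, and split
\[
\cC(\varphi u_n) = [\cA_\psi, M_{b_\eps}](\varphi u_n) + [\cA_\psi, M_{b-b_\eps}](\varphi u_n).
\]

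For fixed $\eps$, the commutator with the smooth multiplier $b_\eps$ is compact on compact sets. Indeed, the Fourier representation
\[
\widehat{[\cA_\psi, M_{b_\eps}](u)}(\mxi) = \int \hat{b}_\eps(\mxi - \meta)\bigl(\psi(\mxi) - \psi(\meta)\bigr)\hat{u}(\meta)\, d\meta
\]
shows that the kernel carries the vanishing-on-diagonal factor $\psi(\mxi)-\psi(\meta)$ together with the rapidly decaying $\hat{b}_\eps$, yielding via the Mikhlin regularity \eqref{eq:multipliers} that $[\cA_\psi, M_{b_\eps}]$ maps $L^{\bar p}(K)$ compactly into $L^{\bar p}(K)$ for any compact $K\subset \R^D$ (this is the classical smooth-$b$ Tartar commutation lemma). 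Since $\varphi u_n \weak 0$ in $L^{\bar p}(\R^D)$ with uniformly compact support, compactness forces $[\cA_\psi, M_{b_\eps}](\varphi u_n) \to 0$ strongly in $L^{\bar p}_{\loc}(\R^D)$, and hence in $L^p_{\loc}(\R^D)$ since $p < \bar p$.

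For the remainder, set $1/s := 1/\bar p + 1/q$; note $s>1$ because $1/\bar p + 1/q < 1 - 1/p < 1$. Hölder's inequality combined with the $L^s$-boundedness of $\cA_\psi$ from Theorem \ref{thm:multipliers} yields, uniformly in $n$,
\[
\bigl\|[\cA_\psi, M_{b-b_\eps}](\varphi u_n)\bigr\|_{L^s(\R^D)} \lesssim \|b-b_\eps\|_{L^q(\R^D)}\,\|\varphi u_n\|_{L^{\bar p}(\R^D)}.
\]
The hypothesis $1/p + 1/\bar p + 1/q < 1$ rewrites as $s > p'$. When $p\le 2$ this already gives $s\ge p$, so the compact support of $\varphi u_n$ turns the $L^s$-bound into an $L^p_{\loc}$-bound $\lesssim \|b-b_\eps\|_{L^q}$ via $L^s(K)\hookrightarrow L^p(K)$. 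When $p>2$ (so possibly $s<p$), one supplements this estimate with a uniform bound of the error in a higher-integrability space (obtained from the $L^r$-boundedness of $\cA_\psi$ for all $r\in(1,\infty)$ applied to the compactly supported inputs $(b-b_\eps)\varphi u_n$) and interpolates via Riesz--Thorin to recover an $L^p_{\loc}$-bound. Letting $n\to \infty$ for fixed $\eps$ and then $\eps\to 0$ concludes the proof.

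The main obstacle lies in this last error analysis in the regime $p>2$, where the natural Hölder exponent $s$ can be strictly smaller than $p$ and a direct $L^p_{\loc}$-bound on the residual commutator is not available from Hölder alone. The strict inequality in $1/p + 1/\bar p + 1/q < 1$ is precisely what provides the headroom for an interpolation argument to close this gap; the corresponding harmonic-analytic details are the content of \cite[Theorem 3]{Antonic:2018aa}.
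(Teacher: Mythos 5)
The paper offers no proof of this lemma --- it is quoted from \cite[Theorem 3 \& Remark 1]{Antonic:2018aa} --- so your proposal must stand on its own. Your strategy (approximate $b$ in $L^q$ by smooth compactly supported $b_\eps$, treat the smooth-coefficient commutator by a compactness lemma, and control the remainder by H\"older plus the Mikhlin bound) is indeed the standard route, and it does prove the statement in the regime $p\le s$, where $\frac1s:=\frac1{\bar p}+\frac1q$. That regime covers every invocation of the lemma in this paper, since there $v_n\in_b L^\infty_c$ and $b$ is the indicator of a bounded set, so $\bar p$ and $q$ may be taken as large as one likes.

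Two points need attention. First, the compactness of $[\cA_\psi,M_{b_\eps}]$ on $L^{\bar p}$ for $\bar p\ne 2$ is not delivered by your Fourier-kernel heuristic: the factor $\psi(\mxi)-\psi(\meta)$ does \emph{not} vanish near the diagonal close to the origin (the symbol $\psi(\mxi/\abs{\mxi})$ is discontinuous there), and Plancherel-type reasoning is unavailable outside $L^2$; calling this ``the classical smooth-$b$ Tartar commutation lemma'' is a mislabel, as Tartar's lemma is an $L^2$ statement. The clean repair is Tartar's $L^2$ lemma (compactness on $L^2$) combined with Mikhlin boundedness on every $L^r$, $r\in(1,\infty)$, and Krasnoselskii's interpolation theorem for compact operators. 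Second, and more seriously, your handling of the regime $s<p$ does not close. There is no uniform higher-integrability bound on $[\cA_\psi, M_{b-b_\eps}](\varphi u_n)$ to interpolate against: the input $(b-b_\eps)\varphi u_n$ lies in $L^s$ and nothing better, $\cA_\psi$ cannot improve integrability, and the term $(b-b_\eps)\,\cA_\psi(\varphi u_n)$ is likewise only in $L^s$, so Riesz--Thorin has a single endpoint and the proposed interpolation is vacuous. Indeed, in that regime the conclusion as literally stated looks out of reach by these methods --- $b\,\cA_\psi(\varphi u_n)$ is generically no better than $L^s_{\loc}$ --- and the hypothesis $\frac1p+\frac1{\bar p}+\frac1q<1$ only yields $s>p'$, whereas what the argument actually needs is $p\le s$, i.e.\ $\frac1p\ge\frac1{\bar p}+\frac1q$. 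Either restrict to $p\le s$ (which suffices for all uses in this paper) or defer entirely to the precise statement proved in \cite{Antonic:2018aa}; the interpolation sketch as written is not a proof.
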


\section*{Acknowledgement}

The current address of Darko Mitrovi\'{c} (DM) 
is the University of Vienna, 
Oskar-Morgenstern-Platz 1, 1090 Vienna, Austria. 
The research of ME is supported by 
the Croatian Science Foundation (UIP-2025-02-1337)
and by the European Union---NextGenerationEU 
through the National Recovery and Resilience Plan 
2021--2026, via an institutional grant of University of Zagreb 
Faculty of Science (IK IA 1.1.3. Impact4Math). 
The research of KHK is supported by the Research Council 
of Norway (351123/NASTRAN), and that 
of DM is partly supported by the 
Austrian Science Fund (P 35508).



\end{document}